\documentclass[12pt]{amsart}
\usepackage{amsmath, amsthm, amscd, amsfonts, amssymb, color}
\usepackage{graphicx, subcaption, float} 
\usepackage{dsfont} 
\usepackage{stmaryrd} 
\usepackage[bookmarksnumbered, colorlinks, plainpages]{hyperref}
\usepackage{comment} 
\newtheorem{theorem}{Theorem}[section]
\newtheorem*{theorem*}{T}
\newtheorem{lemma}[theorem]{Lemma}
\newtheorem{prop}[theorem]{Proposition}
\newtheorem{coro}[theorem]{Corollary}

\theoremstyle{definition}
\newtheorem{definition}[theorem]{Definition}

\theoremstyle{remark}
\newtheorem{remark}[theorem]{Remark}
\newtheorem*{remark*}{Remark}
\numberwithin{equation}{section}

\newcommand{\R}{\mathbb{R}}
\newcommand{\N}{\mathbb{N}}

\newcommand{\C}{\mathbb{C}}
\newcommand{\Z}{\mathbb{Z}}
\newcommand{\U}{\mathbb{U}}
\newcommand{\F}{\mathbb{F}}
\newcommand{\E}{\mathbb{E}}
\newcommand{\X}{\mathbb{X}}

\renewcommand{\Re}{\mathfrak{Re}}

\def\Legendre(#1,#2){%
\left(\dfrac{#1 }{#2}\right)
}

\begin{document}
\setcounter{page}{1}

\centerline{}

\centerline{}


\title[The distribution of large values of mixed character sums]{The distribution of large values of mixed character sums}

\author{Amine Iggidr}

\begin{abstract}
In this paper, we investigate the distribution of values of the complete exponential sum 
$S_{p,\chi}(\theta)=\sum_{n=1}^p \chi(n)e(n\theta)$, where $p$ is a large prime, 
$\chi$ is a Dirichlet character (mod $p$) of order $d\geq 2$, and $\theta$ varies over certain 
subsets of $[0,1]$. When $d=2$, these sums correspond to the values of the Fekete 
polynomial associated with $p$ on the unit circle. Our first result gives precise 
estimates for the tail of the distribution of $|S_{p,\chi}(\theta)|$ in a large uniform 
range, when $\theta$ varies over the set $\{(k+1/2)/p\}_{1\leq k\leq p}$. This improves upon a result of Conrey, Granville, Poonen, and Soundararajan. We 
also consider the distribution of the maximum of $|S_{p,\chi}(\theta)|$ for 
$\theta\in I_k=[k/p,(k+1)/p]$, and obtain upper and lower bounds for the distribution 
of large values of this maximum, valid in a uniform range that is nearly optimal: we make this precise in the paper. Our results provide strong support for a conjecture of Montgomery on the maximum 
of Fekete polynomials on the unit circle. In particular, we show that the distribution function 
exhibits double-exponential decay, with a surprising difference in behavior between the 
cases of even and odd order $d$.
\end{abstract}

\maketitle

\section{Introduction}

Let $p$ be an odd prime and $\Legendre(.,p)$ denote the Legendre symbol$\pmod p$.
The \emph{Fekete polynomial} associated to $p$ is defined by

\[
f_p(z) \;=\; \sum_{n=0}^{p-1} \Legendre(n,p)\,z^n.
\]
 Such polynomials appear in Dirichlet’s nineteenth–century work on \(L\)-functions, where he used the identity
\[
L\left(s,\Legendre(\cdot,p)\right) = \frac{1}{\Gamma(s)}\int_0^1 \frac{(-\log t)^{s-1}}{t}\frac{f_p(t)}{1-t^p} \,dt\]
to study \(L(1,\chi)\) and have an analytic expression of \(L(s,(\frac{\cdot}{p}))\). They were named after Michael Fekete, who observed thanks to the above formula that the
absence of zeroes $t\in(0,1)$ of the Fekete polynomial implies an absence of real zeroes $s>0$ for \(L(s,(\frac{\cdot}{p}))\). 
Such polynomials have been extensively studied; see, for instance, \cite{BorweinChoiFekete, ConreyGranvillePoonenSoundararajan2000, ErdelyiFekete, klurman2023lqnormsmahlermeasure}. 

\medskip

Montgomery~\cite{MontgomeryLegendre} initiated the study of extremal values of Fekete polynomials on the unit circle, establishing upper and lower bounds for
\(
\displaystyle \max_{\theta\in[0,1]} \bigl|f_p\big(e( \theta )\big)\bigr| \), where here and throughout we let \( e(t):=e^{2\pi i t}.
\)
More precisely, he proved
\[
\sqrt{p}\log\log p \ll \max_{\theta\in[0,1]} \bigl|f_p\big(e( \theta )\big)\bigr| \ll \sqrt{p}\log p, \qquad \text{as } p\to\infty.
\]
Montgomery further 
conjectured that the true order of growth of this maximum is given by the lower bound, namely
\begin{equation} \label{eq:conjMontgomery}
    \max_{|z|=1} |f_p(z)| \asymp \sqrt{p}\log\log p.
\end{equation}

The work of Conrey, Granville, Poonen and
Soundararajan~\cite{ConreyGranvillePoonenSoundararajan2000} placed Montgomery's
approach in a broader context, combining probabilistic tools with analytic
techniques to study the zero distribution and large‐value asymptotics of
\(f_p\). In particular, one of their results can be viewed as progress toward
Montgomery’s conjecture, by describing the limiting distribution of large
values of \(f_p\) at midpoints of the arcs
between consecutive \(p\)-th roots of unity. 
For a prime $p$, we denote by $e_p(x) := e^{2i \pi x/p}$ the canonical additive character of 
$\F_p$ (where $\F_p$ is the finite field of order $p$). Their theorem~\cite[Theorem 4]{ConreyGranvillePoonenSoundararajan2000} may be stated as follows. 
Let \(p\) be a prime, for every fixed real number \(V\), we have

\[
\frac{1}{p}\left|\Bigl\{ k \in \{1,2,\dots,p\} :  \left| f_p\Bigl(e_p(k+\tfrac{1}{2})\Bigr) \right|
> V\sqrt{p} \Bigr\}\right|
\sim c_V, \qquad \text{as } p \to \infty,
\]
where
\( c_V =  \exp(-\exp(\pi V /2 + O(1)))\) for large fixed $V$. The resulting distributional statement for the values of \(f_p\) therefore provides quantitative 
information on how often \(f_p\) takes large values, in line with the philosophy underlying
Montgomery’s conjecture. 

\medskip 
In the present paper, we strengthen this result in two complementary
directions. First, we derive a substantially more precise asymptotic formula, in which the
\(O(1)\) term is replaced by an explicit constant together with a
significantly sharper error term. Moreover, we obtain this estimate in a wide range of $V$,
which we believe to be nearly optimal, in accordance with Montgomery's conjecture. This leads to the following result.

\begin{theorem}
    \label{theorem:distribution1/2CharacterSum}
   Let $p$ be a large prime.
    For all real numbers $ 1\leq V \leq \frac{2}{\pi}(\log\log p - 2\log\log\log p)$ we have

    \begin{equation*}
        \dfrac{1}{p} \left|\left\{K \in \F_p : \; \dfrac{1}{\sqrt{p}}\left| f_p\Bigl(e_p(K+\tfrac{1}{2})\Bigr) \right| \geq V \right\} \right|
        = \exp\left( - C_2^- \exp \left(\frac{\pi}{2}V  \right)  \left(  1 + O(e^{-\pi V/4 }) \right)   \right) 
    \end{equation*}
\noindent

\[
\text{where} \; C_2^-= \frac{2}{\pi} \exp\left( - \gamma - 1 + \log \frac{\pi}{2} - 
\displaystyle\int_0^1 \frac{\log \cosh (u)}{u^2}\,du - \displaystyle\int_1^{\infty} \frac{\log\!\left(1+e^{-2u}\right)}{u^2}\,du \right),
\]
and $\gamma$ is the Euler–Mascheroni constant.
\end{theorem}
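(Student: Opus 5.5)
Here is the plan. First convert the value at the midpoint into a short sum of Legendre symbols. Then model that sum by a random multiplicative function. Finally, compute the tail of the model precisely.

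\textbf{Step 1: the Fourier expansion.} Write $\chi=\Legendre(\cdot,p)$, $\theta=(K+1/2)/p$, and let $\tau(\chi)$ be the Gauss sum. Expanding $\chi(n)=\tau(\chi)^{-1}\sum_a\overline\chi(a)e_p(an)$ and summing the geometric series in $n$ gives
\[
f_p\bigl(e_p(K+\tfrac12)\bigr)=\frac{\tau(\chi)}{p}\sum_{a\bmod p}\chi(a)\,\frac{2}{1+ i\cot(\pi(a+K+1/2)/p)}.
\]
Passing to the partial-fraction form, this equals, up to a unimodular factor,
\[
\frac{\sqrt p}{\pi}\sum_{|m|\le p/2}\frac{\chi(m-K)}{m+1/2}+O(\text{small}).
\]
Hence $|f_p|/\sqrt p$ is essentially $\bigl|\frac{2}{\pi}\sum_{m\ge0}\frac{\chi(K+m+1)\mp\chi(K-m)}{2m+1}\bigr|$ with suitable signs. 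For the Legendre symbol the value is real after normalization, which is why the threshold reads $\frac{\pi}{2}V$.

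Truncate the sum at $m\le y$ with $y=p^{c}$ small. The tail is controlled by Pólya–Vinogradov and partial summation, and a large-sieve or variance argument shows that the tail exceeds $e^{-\pi V/4}$ only for a negligible proportion of $K$.

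\textbf{Step 2: moments and the random model.} Compare moments of the truncated sum over $K\in\F_p$ with those of the random model $X=\frac{2}{\pi}\sum_{m\ge0}\frac{\epsilon_{m+1}-\epsilon_{-m}}{2m+1}$, where the $\epsilon_n$ are independent uniform $\pm1$ signs. Shifted Legendre symbols at distinct shifts behave independently, by Weil's bound: $\sum_K\prod_j\chi(K+h_j)$ equals $p$ or $O(k\sqrt p)$. So the $k$-th moments, or better the Laplace transforms $\E[e^{tX}]$ truncated, agree for $k$ up to about $\log p/\log y$.

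I would use the Laplace transform $\E[e^{t\,\cdot}]$ with $t\asymp e^{\pi V/2}$, following the Granville–Soundararajan saddle-point method. Since $\log\E e^{tX}=\sum_m \log\cosh\bigl(\frac{2t}{\pi(2m+1)}\bigr)$ summed over both sign families, comparing requires moments of order $\approx t\log t$. The upper limit $V\le\frac2\pi(\log\log p-2\log\log\log p)$ is exactly what is needed for $e^{\pi V/2}\le \log p/(\log\log p)^2$ to stay in that range.

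\textbf{Step 3: the tail of the model.} Compute $\log\E e^{tX}$ asymptotically as $t\to\infty$. Splitting the sum at $m\asymp t$, terms with $m\le t$ contribute the linear-in-$\log$ part $\frac{2t}{\pi}(\log t+\gamma+\log 2+\dots)$, via Euler–Maclaurin on $\sum 1/(2m+1)$. Terms with $m>t$ give the integrals $\int_0^1\frac{\log\cosh u}{u^2}\,du$ and $\int_1^\infty\frac{\log(1+e^{-2u})}{u^2}\,du$ after using $\log\cosh u=u-\log2+\log(1+e^{-2u})$.

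A saddle-point argument then converts this into $\Pr(X>V)=\exp(-C e^{\pi V/2}(1+O(e^{-\pi V/4})))$. To sharpen this to two-sided asymptotics, I would tilt the measure at the saddle point and check that the tilted variance is $O(t)$, which yields the $e^{-\pi V/4}$ relative error. Evaluating the constant produces $C_2^-$.

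\textbf{Main obstacle.} The hardest part will be transferring the model tail to the arithmetic distribution with relative error $e^{-\pi V/4}$ across the full range. The Weil error $k\sqrt p\,y^{k}$ must remain negligible against probabilities as small as $\exp(-c\log p/(\log\log p)^2)$. I would handle this by taking $y=(\log p)^{A}$, bounding the tail beyond $y$ by a high moment together with Pólya–Vinogradov, and comparing exponential moments of the truncated sums at two nearby values of $t$ to sandwich the distribution function.
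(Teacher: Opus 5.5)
Your plan follows the paper's architecture closely. The shared ingredients are:
\begin{itemize}
\item the midpoint value written as a $\frac{1}{2m+1}$-weighted sum of shifted Legendre symbols (the paper gets it from Lagrange interpolation, \eqref{formulaG});
\item Weil-based moment matching against independent random signs;
\item the expansion $\log\E e^{tX}=\frac{2t}{\pi}\log t+C_2t+O(\log t)$, obtained by splitting where $\frac{2t}{\pi(2m+1)}$ crosses $1$ (Lemmas~\ref{lemma:MiddleSumOfEstimate} and~\ref{lemma:HeadailOfEstimate}); note that the $\int_1^\infty\log(1+e^{-2u})u^{-2}\,du$ term comes from the \emph{small} $m$, not from $m>t$;
\item the sandwich via Laplace transforms at $se^{\pm\varepsilon}$ plus monotonicity.
\end{itemize}

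The genuine difference is the transfer step. The paper does not truncate the character sum before comparing with the model. It matches moments of the full sum, whose Weil error is $\frac{n}{\sqrt p}\bigl(\frac{1}{\pi}\log p\bigr)^n$. It therefore has to discard the exceptional set $\mathcal{E}_p$ where $|g_{\chi,K}|\ge\log_2 p$ and cut the exponential series at $N=\frac{\log p}{12\log_2 p}$. This needs only the crude bound $|\mathcal{E}_p|\ll p^{3/4}$ and no shift in $V$, but it confines Proposition~\ref{prop:linkgpkAndGX} to $|s|\le\log p/(100(\log_2 p)^2)$.

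Your truncation at $y=(\log p)^A$ gives $|S_K^{(y)}|\le B\ll\log y$ for every $K$. So the whole exponential series can be compared at once, with error at most $tBe^{tB}/\sqrt p$, which is negligible for $t$ up to a small constant times $\log p/\log_2 p$. The price is twofold:
\begin{itemize}
\item a high-moment bound for the tail, as in Corollary~\ref{coro:MajorationSommeZMN-CasParticulier}; the variance bound of your Step 1 is too weak once $y$ is polylogarithmic;
\item a shift of $V$ by some $\eta\le e^{-\pi V/4}$.
\end{itemize}

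This buys something real. At the top of the stated range the saddle point is $s=\frac{\pi}{2}C_2^-\log p/(\log_2 p)^2\approx 0.19\log p/(\log_2 p)^2$. That lies outside the range of Proposition~\ref{prop:linkgpkAndGX} as stated, but well inside yours.

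For your route to work, though, you must use the weighted Weil error $\frac{k}{\sqrt p}\bigl(\sum_{m\le y}|c_m|\bigr)^k$ rather than the $ky^k/\sqrt p$ you wrote. With the latter, the moments of order $\asymp t\log t\asymp\log p/\log_2 p$ that you correctly identify as necessary are out of reach near the top of the range.
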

Second, we go beyond the midpoint analysis by considering the maximum of
\(|f_p|\) over the entire subarc between two consecutive \(p\)-th roots of
unity, rather than only at the midpoint. More precisely, we investigate the distribution of 
\( \displaystyle\max_{t\in [0,1]} \left| f_p\left(e(\tfrac{K+t}{p})\right) \right| \)
as $K$ varies in $\F_p$. One can observe that 

\begin{equation}\label{eq:max=doublemax}
\displaystyle\max_{\theta\in[0,1]} \left|f_p\big(e( \theta )\big)\right| = 
\displaystyle\max_{K\in \F_p}\displaystyle\max_{t\in [0,1]} \left| f_p\left(e(\tfrac{K+t}{p})\right) \right|,
\end{equation}
\noindent
so that investigating the extreme values of this quantity will shade light on Montgomery's conjecture.

\medskip
More recently, Wang and Xu~\cite{WangXuMixedCharSum} investigated the average size of mixed character sums, specifically of the form 
$\sum_{1 \leq n \leq x} \chi(n)e(n\theta)w(\tfrac{n}{x})$ as $\chi$ varies over Dirichlet characters (mod $q$), where $w$ is a smooth function. They found that these sums are on the order of 
$\sqrt{x}$ when $\theta$ is an irrational real number satisfying a weak Diophantine 
condition. This result highlights a distinction from previous findings by 
Harper~\cite{harper2023typicalsizecharacterzeta}, where for rational $\theta$, the average size was found to be $o(\sqrt{x})$.
Another recent development in the study of mixed character sums is due to Bober, Klurman, and Shala~\cite{BoberKlurmanShala2025} who revisited this circle of ideas from a broader perspective, by investigating the distributional behaviour of 
\emph{mixed character sums}
\[
S(\chi, x, \theta) \;=\; \sum_{n\le x} \chi(n) e(n\theta),
\]
in various averaging regimes. Their distributional results allows them to construct an infinite family of Littlewood polynomials with record-large Mahler measure
(advancing the Mahler problem).

\medskip

Motivated by these developments, we define for any nonprincipal Dirichlet character \(\chi \pmod p\) the associated generating polynomial
\begin{equation}
    \label{def:fchi}
f_\chi(z)\;=\;\sum_{n=0}^{p-1}\chi(n)\,z^n.
\end{equation}

Our definition generalizes the Fekete polynomial, which corresponds to the special case where $\chi$ is a quadratic character.  
The study of $f_\chi$ for characters of arbitrary order thus extends Montgomery’s framework to a wider family of multiplicative characters, allowing one to probe 
the same oscillatory behavior in greater generality.

\medskip

In this paper, we investigate the distribution of $f_\chi$ on the unit circle, namely the exponential sum
\[
f_\chi(e(\theta)) = \sum_{n\in\F_p}\chi(n)e(n\theta), \qquad \theta\in[0,1].
\]  
Throughout our work, we will normalize the above sum by the factor $p^{-1/2}$. Indeed, by comparison with the size of Gauss 
sums, we expect $|f_\chi(e(\theta))|$ to be typically of order $\sqrt{p}$, so dividing by this factor places $f_\chi$ on its natural probabilistic 
scale, in accordance with the heuristic analogy between $f_\chi(e(\theta))$ and a random walk of $p$ unit steps in the complex plane.

\medskip

To study the frequency of these large deviations, we partition the unit circle into $p$ equal subarcs between consecutive $p$-th roots of unity, 
namely between $e_p(K)$ and $e_p(K+1)$ for $K\in\F_p$. This
decomposition is natural in view of the identity~\eqref{eq:max=doublemax}. For $x=0$ and $x=1$, the expression reduces to a Gauss sum, thus these values will not contribute in the large deviations.
We therefore consider the distribution of the maximum of $\left| f_\chi \right|$ on the $K^\text{th}$ arc as $K$ varies, that is,

\[
\Phi_\chi(V)\ :=\ \frac{1}{p}\,\Bigl|\Bigl\{K\in\F_p:\; \max_{x\in[0,1]}\Bigl|\frac{f_\chi(e_p(K+x))}{\sqrt p}\Bigr|\ \ge V\Bigr\}\Bigr|.
\]

This quantity measures the proportion of subarcs on which the normalized generating polynomial exceeds a value $V$, and serves as the central 
object of our study. We will prove matching (up to explicit constants) \emph{double-exponentially decreasing} 
upper and lower bounds for $\Phi_\chi(V)$ in the uniform range
\begin{equation}
\label{def:deltad}
1 \le V\le \dfrac{\delta_d}{\pi}(\log_2 p-2\log_3 p),
\qquad \text{ where } \delta_d \;:=\;
\begin{cases}
2 \cos\!\dfrac{\pi}{2d}, & \text{if $d$ is odd;} \\[1.2ex]
2, & \text{if $d$ is even;}
\end{cases}
\end{equation}
\noindent
where we adopt the following usual notation for iterated logarithms
\[
\log_2 x := \log \log x, \qquad \log_{n+1} x := \log_n(\log x), \qquad  n\ge 2.
\]

\medskip

The work of Granville and Soundararajan on large character sums
\cite{GranvilleSoundararajan2007} reveals a pronounced dichotomy between
characters of \emph{odd order} and \emph{even order} with respect to the
maximum of their sums. More precisely, for primitive characters of bounded
odd order and under the Generalized Riemann Hypothesis, they showed that 
the maximum is a $O\!\left(\sqrt{p}(\log\log p)^{1-\gamma_d} \right)$ with $0<\gamma_d<1$, 
whereas the bound $\sqrt{p}\log \log p$ is attained infinitely often for even-order characters 
according to R.E.A.C Paley's 1932 result~\cite{paley1932}.
Strikingly, a similar parity-dependent phenomenon emerges in our
setting: we observe a marked discrepancy between the
even and odd cases, which is reflected in the geometry of the values of
$\chi$ on the unit circle and, quantitatively, in the scale of our tail
bounds through the parameter $\delta_d$.

\medskip

Our main results are the two following theorems.

\begin{theorem}
    \label{theorem:distributionEvenOrderCharacterSum}
   Let $p$ be a large prime. Uniformly for all Dirichlet characters $\chi\pmod p$ of even order $d$ and
    for all real numbers $ 1\leq V \leq \frac{2}{\pi}(\log_2 p - 2\log_3 p)$ we have
    \begin{equation}\label{eq:thmEvenOrders}
        \Phi_\chi(V) = \exp\left( - \exp \left(\frac{\pi}{2}V +O(1) \right) \right), 
    \end{equation}
\noindent
where the implicit constant is absolute and lies between $2.4 \times 10^{-6}$ and $0.3207$.

\end{theorem}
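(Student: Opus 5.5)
We would start from the standard Fourier expansion of the complete sum on the subarc. Writing $\tau(\chi)$ for the Gauss sum, for $\theta=(K+x)/p$ we expand $e(n\theta)$ in terms of additive characters mod $p$. This gives
\[
\frac{f_\chi(e_p(K+x))}{\sqrt p}=\frac{\tau(\chi)}{\sqrt p}\cdot\frac{e(x)-1}{p}\sum_{m\in\F_p}\frac{\overline{\chi}(m-K)}{1-e((x-m)/p)} .
\]
For $m$ close to $K$ (shifted by $h=m-K$), this behaves like
\[
\frac{e(x)-1}{2\pi i}\sum_{|h|\le H}\frac{\overline\chi(K+h)}{h-x}+\text{(small tail)} .
\]
So the problem reduces to the distribution of the random-like sums
\[
F_K(x)=\frac{1-e(x)}{2\pi}\sum_{h}\frac{\overline\chi(K+h)}{x-h}.
\]

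The first step is to truncate at $H=(\log p)^{A}$, with the tail controlled in mean square. Next we compare moments: by Weil's bound for character sums with polynomial arguments, the joint distribution of $(\chi(K+h))_{|h|\le H}$ as $K$ varies matches that of independent random variables $X_h$ uniform on the $d$-th roots of unity, up to error $p^{-1/2}H^{O(k)}$ in moments of order $k$. We then study the random model
\[
F(x)=\frac{1-e(x)}{2\pi}\sum_h \frac{X_h}{x-h}
\]
and its maximum over $x\in[0,1]$. For the random model we prove two-sided tail bounds for $\max_x|F(x)|$ via the Laplace transform. For real $s$ large,
\[
\log\E e^{s\,\Re(\omega F(x))}=\sum_h \log\E\exp\!\left(\frac{s\,|\sin\pi x|}{\pi}\Re\frac{\omega' X_h}{x-h}\right).
\]
When $d$ is even, $X_h$ and $-X_h$ both occur, so one can align the sign of $X_h$ with that of $1/(x-h)$ for every $h$. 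Taking $x=1/2$ gives $\sum_h 1/|x-h|\cdot(\sin\pi x)/\pi\approx \frac{2}{\pi}\log H$, and the saddle point yields a cumulant $\approx \frac{2s}{\pi}\log s$. This produces the shape $\exp(-\exp(\frac{\pi}{2}V+O(1)))$ for a single $x$; this is the even-order parameter $\delta_d=2$.

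For the lower bound we would use the midpoint $x=1/2$ and exhibit the event that $X_h=\pm1$ matches $\operatorname{sgn}(h-1/2)$ for all $|h|\le y\asymp e^{\pi V/2}$. Its probability is $d^{-2y}$, which has the required double-exponential form, and the remaining terms are controlled by Chebyshev. Alternatively we would tilt the measure and use a saddle-point argument with the Laplace transform, matching moments of order $k\asymp e^{\pi V/2}$; this is permissible by the Weil error as long as $H^{k}\ll p^{1/4}$. That requirement forces $V\le \frac{2}{\pi}(\log_2 p-2\log_3 p)$, which is exactly the stated range.

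For the upper bound we must control the maximum over $x\in[0,1]$, not just a point. Our plan is to note that $F_K$ is (for fixed truncation) a smooth function of $x$ whose derivative is bounded by $\ll \sum_h|h|^{-1}\cdot$(bounded) $\ll \log H$ in the worst case. Better, the derivative is itself a sum of the same type with coefficients decaying like $1/h^2$, and hence is typically $O(1)$ with a double-exponential tail. We discretize $[0,1]$ into $O(1)$ points (or $O(V)$ points), apply the Laplace-transform bound at each point uniformly in $x$, and take a union bound. Uniformity in $x$ of the constant in $\exp(\frac{\pi}{2}V)$ requires showing that $\sup_x \frac{|\sin\pi x|}{\pi}\sum_{h\ne}\frac{1}{|x-h|}$, in its logarithmic main term, is still $\frac{2}{\pi}\log y+O(1)$. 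This holds since $|\sin \pi x|\le 1$ and the main term $\frac{1}{\pi}\cdot 2\log y$ is independent of $x$, while near $h$ close to $x$ the factor $\sin\pi x$ cancels the pole. This is the step I expect to be the main obstacle: making the chaining or union bound over the continuum of $x$ lose only an $O(1)$ in the exponent. Bounding a moment of order $2k$ of $\sup_x|F_K(x)|$ via Sobolev ($\sup|g|\le |g(1/2)|+\int|g'|$) together with the moment comparison is what I would try first. Finally, tracking explicit constants in these saddle-point estimates gives the numerical window for the $O(1)$.
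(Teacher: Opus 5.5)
There is a genuine gap in the upper bound, at exactly the step you flag. Your claim that $\partial_x F_K$ is a sum with coefficients of size $h^{-2}$ overlooks that the prefactor $1-e(x)$ also depends on $x$:
\[
F_K'(x) = -i e(x)\sum_h \frac{\overline\chi(K+h)}{x-h} \;-\; \frac{1-e(x)}{2\pi}\sum_h \frac{\overline\chi(K+h)}{(x-h)^2}.
\]
The first sum has coefficients of size $1/|h|$. Since $\sum_{h\in\Z}(h-\tfrac12)^{-2}=\pi^2$, this gives $|F_K'(1/2)|\ge \pi|F_K(1/2)|-\pi$, so the derivative is of size $\asymp V$ precisely on the configurations you are counting. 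A net of $O(1)$ points is therefore not controlled: the deterministic Lipschitz constant is $\asymp \log H$. The global Sobolev bound $\sup_x|F_K|\le |F_K(1/2)|+\int_0^1|F_K'|$ is also lossy. On sign-aligned configurations $\int_0^1|F_K'|$ is at least a fixed multiple of $|F_K(1/2)|$, so this route can only give $\exp(-\exp(cV))$ with some $c<\pi/2$, and no $O(1)$ in the exponent absorbs that. (Your $O(V)$-point net could be rescued by applying Sobolev locally on intervals of length $\asymp 1/V$. This uses that $F_K'$, whose coefficients are $O(1/(1+|h|))$ once the $h=0,1$ terms are grouped with the prefactor, has $2k$-th moment at most $(C\log k)^{2k}$. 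But that is not the argument you give.)

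The paper sidesteps the problem by never treating the head probabilistically. It splits at $J\asymp k\asymp e^{\pi V/2}$ and uses the deterministic extremal bound
\[
\max_{x\in(0,1)}\ \max_{(a_j)\in\U_d^{2J+1}}\Bigl|\sin(\pi x)\sum_{|j|\le J}\frac{a_j}{\pi(j+x)}\Bigr|\le \frac{2}{\pi}\log J+O(1),
\]
valid for all $K$ at once. For even $d$ the extremiser is $a_j=\operatorname{sgn}(j+x)$, and $\sin(\pi x)$ tames the endpoint poles. With constants chosen so that this is at most $V-1$, the event $\max_x|f_\chi(e_p(K+x))|\ge V\sqrt p$ forces the tail over $|j|>J$ to exceed $1$ at some $x$. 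After pulling out the bounded factor $e(x)-1$, the tail coefficients $1/(p(e_p(j+x)-1))$ have $x$-derivative $O(j^{-2})$, so a net of $k^4$ points costs only $O(1/(Jk^4))$. At each net point the $2k$-th moment is at most $(Ck/J)^k+O(kp^{-1/2}(C\log p)^{2k})$, by Weil's bound together with the count of tuples for which $\prod_h(X-j_h)^{d-1}\prod_{h'}(X-j_{h'})$ is a $d$-th power. The union bound then loses only a factor $k^4$. No Laplace transform is needed for the upper bound; the sharp rate $\pi/2$ comes entirely from the deterministic head bound.

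Two further points need repair. First, truncating at $H=(\log p)^A$ with the tail controlled only in mean square bounds the exceptional set merely by $O(1/H)$. That exceeds $\exp(-e^{\pi V/2})$ once $V>\frac{2}{\pi}(\log_3 p+\log A)$. In the upper bound, and in your Chebyshev step for the lower bound, you need $2k$-th moments with $k\asymp e^{\pi V/2}$. The paper, for the lower bound, does not truncate at all: it compares Laplace transforms of the full sum, discarding only at most $p^{3/4}$ values of $K$ with $|g_{\chi,K}(1/2)|\ge\log_2 p$.

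Second, and more seriously, the statement asks for an $O(1)$ uniform over all even $d$, and $d$ may be as large as $p-1$. Your pattern event has density $d^{-(2y+1)}+O(y/\sqrt p)$. This puts $\log\log d$ into the exponent, and it is swamped by the Weil error once $(2y+1)\log d>\frac12\log p$ (for $d=p-1$, for every $y\ge 1$). So only your saddle-point alternative works. It needs $\log\E\exp(2s\Re G_{\X,\chi}(1/2))=\frac{2s}{\pi}\log s+O(s)$ with an error independent of $d$. The paper obtains this by sandwiching $\alpha_d$ between $\log I_0$ and $|u|$ and showing that the resulting constant $C_d$ is monotone in $d$ with a finite limit. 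With these changes your lower bound is the paper's.
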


\begin{remark*}
    What we prove is actually slightly stronger, we get explicit constants for the $O(1)$ in the upper and lower bound, 
    respectively in Theorem~\ref{thm:UpperBoundBothTheorems} and in Theorem~\ref{thm:LowerBoundEvenThm}, with the lower bound expected
    to be the true order of magnitude. Moreover, for the lower bound, the explicit constant is decreasing in $d$ and for large $d$, it is closer to $0.3070$;
    this behavior is illustrated by the numerical experiments in Figure~\ref{fig:twoplots}.
\end{remark*}

\medskip
In particular, if the estimate~\eqref{eq:thmEvenOrders} holds up to $V_{max}(p):=\displaystyle\max_{\theta\in[0,1]}\left|\frac{f_\chi(e(\theta))}{\sqrt p} \right| $, 
then this would imply that $\Phi_\chi(V_{max}(p))\geq \frac{1}{p}$, thus $V_{max} \leq \frac{2}{\pi} \log \log p$ + O(1) 
which would prove a stronger form of Montgomery’s conjecture~\eqref{eq:conjMontgomery}, and this shows that our range is nearly optimal.

\medskip

The counterpart of Theorem~\ref{theorem:distributionEvenOrderCharacterSum} for odd-order characters reads as follows:

\begin{theorem}
    \label{theorem:distributionOddOrderCharacterSum}
    Let $d\geq 3$ be a fixed odd integer and $p \equiv 1\pmod d$ be a large prime. Uniformly for all Dirichlet 
    characters $\chi\pmod p$ of odd order $d$ and
    for all real numbers $ 1\leq V \leq \frac{2}{\pi}\cos\frac{\pi}{2d}(\log_2 p - 2\log_3 p)$, we have

\begin{equation*}
    \Phi_\chi(V) = \exp\left( - \exp \left(\frac{\pi}{2\cos\!\frac{\pi}{2d}}V +O_d(1) \right) \right).
\end{equation*}

\noindent

\end{theorem}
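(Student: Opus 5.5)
Writing $\tau(\chi)=\sum_{a\in\F_p}\chi(a)e_p(a)$ for the Gauss sum, the plan is to pass, via finite Fourier analysis on $\F_p$, from $f_\chi$ on the $K$-th arc to a twisted sum attached to $\overline\chi(K)$. For $x\in(0,1)$, expanding $e_p(nx)$ in additive characters modulo $p$ gives
\[
\frac{f_\chi(e_p(K+x))}{\sqrt p}=\frac{\tau(\chi)}{\sqrt p}\,\frac{e(x)-1}{p}\sum_{m\in\F_p}\frac{\overline\chi(K+m)}{e_p(-(m+x))\cdot(\ldots)}
\approx \frac{\tau(\chi)}{\sqrt p}\,\frac{e(x)-1}{2\pi i}\sum_{|m|\le M}\frac{\overline\chi(K+m)}{m+x}.
\]
The truncation error is controlled by Pólya--Vinogradov type bounds after partial summation. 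The normalized value on arc $K$ is therefore, up to a unimodular factor, $F_K(x)=\frac{\sin \pi x}{\pi}\bigl|\sum_{|m|\le M}\overline\chi(K+m)/(m+x)\bigr|$. As $K$ varies, the vector $(\chi(K+m))_{|m|\le M}$ is equidistributed among the $d$-th roots of unity, jointly independent, for $M$ up to roughly $\log p$. I would justify this by computing moments and using Weil's bound for $\sum_K\prod_m\chi(K+m)^{a_m}$, which is possible as long as $M$ and the moment order $k$ satisfy $(Mk)^{C}\ll p^{1/2-\epsilon}$. This comparison with a random model $F(x)=\frac{\sin\pi x}{\pi}\bigl|\sum_m X_m/(m+x)\bigr|$, where the $X_m$ are i.i.d.\ uniform on $\mu_d$, is presumably what the earlier sections establish. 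I would take it, together with the upper bound Theorem~\ref{thm:UpperBoundBothTheorems}, as given.

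For the upper bound I would invoke Theorem~\ref{thm:UpperBoundBothTheorems}, with the odd-order constant coming from $\delta_d$. The point is that $\max_x F(x)\le \frac{1}{\pi}\sup_x\sum_{|m|\le M}\frac{\sin \pi x}{|m+x|}$ is achieved only if the $X_m$ align with the signs of $1/(m+x)$. For odd $d$ one cannot have both $X_m$ and $-X_m$ in $\mu_d$. Projecting onto the best direction $\theta$ gives $\max_{\zeta\in\mu_d}\Re(\zeta e^{-i\theta})$ for positive $m$ and $\max_{\zeta}\Re(-\zeta e^{-i\theta})$ for negative $m$. The optimal choice of $\theta$ makes both equal $\cos\frac{\pi}{2d}$, so the extremal sum grows like $\frac{2\cos(\pi/2d)}{\pi}\log M$ instead of $\frac{2}{\pi}\log M$. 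The distribution of large values then follows from a saddle-point estimate on $\E\exp(\lambda\,\Re(e^{-i\theta}\sum X_m/(m+x)))$, whose logarithm is $\sim \frac{2\cos(\pi/2d)}{\pi}\cdot$ (stuff) $\cdot e^{\ldots}$. The maximum over $x$ and $\theta$ is handled by a net of $O(V^{C})$ points together with a Lipschitz bound.

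For the lower bound, which I expect to be the main obstacle, I would proceed directly. Fix $x=1/2$ and $\theta$ optimal. Let $y$ be such that $\frac{2\cos(\pi/2d)}{\pi}\log y\approx V+C_d$, and count $K$ with $\chi(K+m)=\zeta_+$ for $1\le m\le y$ and $\chi(K+m)=\zeta_-$ for $-y\le m\le 0$, where $\zeta_\pm$ are the maximizing roots. By Weil, the number of such $K$ is $p\,d^{-2y-1}(1+O(d^{2y}y\,p^{-1/2}))$. This is $\exp(-e^{\pi V/(2\cos(\pi/2d))+O_d(1)})$, and it is nontrivial precisely when $d^{2y}\le p^{1/2}/\log^3 p$, i.e. in the stated range with the $2\log_3 p$ loss.

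It remains to show that for a positive proportion of such $K$ the tail $\sum_{|m|>y}\overline\chi(K+m)/(m+1/2)$ does not cancel the main term. Its real part in direction $\theta$ is small on average: a second-moment computation, again via Weil restricted to the conditioned set, gives variance $O(1/y)$. Chebyshev then shows that at least half of the conditioned $K$ satisfy $F_K(1/2)\ge V$, after enlarging the constant $C_d$. This yields the stated double-exponential lower bound, and combining it with the upper bound gives the theorem.
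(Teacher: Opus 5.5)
Your proposal is correct, and for the upper bound it follows the paper. There Theorem~\ref{thm:UpperBoundBothTheorems} is proved not by a saddle point but by two pieces: the head $|j|\le J$ is bounded deterministically through Lemma~\ref{lemma:maxOfSumUnitCircle}, which is exactly your $\cos\frac{\pi}{2d}$ projection argument, and the tail is bounded by $2k$-th moments over a net in $x$. What you need, and what that proof actually delivers, is the exponent $\frac{\pi}{\delta_d}V$, not the $\frac{\pi}{2}V$ in its displayed statement.

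For the lower bound, you and the paper (Theorem~\ref{theorem:OddCharacterLowerBound}) both force the extremal pattern on the window and count it with Weil (Proposition~\ref{prop:WeilFixValues}), but you control the tail differently. The paper does it unconditionally. It applies Corollary~\ref{coro:MajorationSommeZMN-CasParticulier} with $k=\frac{49\log d}{8e}J$ and threshold $\lambda=7\sqrt{\log d}$ to show that the $K$ with a large tail have density $o(d^{-2J-1})$, and then subtracts that set from the pattern count. You instead insert $\mathds{1}_{\mathcal P}(K)=d^{-(2y+1)}\prod_{|m|\le y}\sum_{t=0}^{d-1}(\chi(K+m)\overline{a_m})^t$ into the second moment of the tail. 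Since $m_1,m_2$ lie outside the window, the only perfect $d$-th powers come from $\mathbf t=0$, $m_1=m_2$. The conditional mean square is therefore $\ll 1/y+y\,d^{2y+1}(\log p)^2p^{-1/2}$, which is $\ll 1/y$ throughout the range, because $y\ll\log p/(\log_2 p)^2$ gives $d^{2y}=p^{o(1)}$ for fixed $d$.

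Your route buys three things. It needs only a second moment and an $O(1)$ threshold, so the constant multiplying $e^{\pi V/\delta_d}$ is $\asymp\log d$ rather than $\tilde C_d^-\asymp\log d\,e^{7\pi\sqrt{\log d}/\delta_d}$. It covers the whole range of the statement, whereas Theorem~\ref{theorem:OddCharacterLowerBound} stops $7\sqrt{\log d}$ short of it. It also avoids moments of order $k\asymp J\log d$, which exceed the range $k\le M$ in which Corollary~\ref{coro:MajorationSommeZMN-CasParticulier} is stated. The paper's route, in exchange, reuses the moment machinery of Section~\ref{section3:proofUpperBound} verbatim and never has to re-expand the pattern indicator.

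One correction to your bookkeeping. The Weil constraint $d^{2y}y\log^2p\ll\sqrt p$ is far weaker than the stated range, since it allows $y$ of size $\log p/\log d$. So the $2\log_3 p$ loss is not forced by the pattern count.
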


\begin{remark*}
    Here, the implicit constant depends on $d$, we derive an explicit upper and lower bound for $\Phi_\chi(V)$ respectively in
    Theorem~\ref{thm:UpperBoundBothTheorems} and in Section~\ref{section6:proofoddLowerBound}, Equation~\eqref{eq:valCd-smallOdd}. 
    It is worth noting that the lower bound increases too quickly with the order $d$, although the techniques 
    of Section~\ref{section5:proofLowerBound} can be adapted and yield similar asymptotics when $d \gg (\log p)^2$.

\end{remark*}

\begin{coro}
    For all large primes $p$, for all Dirichlet characters $\chi \pmod p$ of order $d$, there exists $\theta\in[0,1]$
    such that if $d$ is even,
    \[ \left| \sum_{n\in\F_p}\chi(n)e(n\theta) \right| \geq \left(\dfrac{2}{\pi} +o(1) \right)\sqrt{p}\log\log p,
    \]
    whereas if $d$ is odd,
    \[ \left| \sum_{n\in\F_p}\chi(n)e(n\theta) \right| \geq \left(\dfrac{2}{\pi}\cos\frac{\pi}{2d} +o(1) \right)\sqrt{p}\log\log p.
    \]

\end{coro}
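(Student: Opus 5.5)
The corollary follows from the lower bounds in Theorems~\ref{theorem:distributionEvenOrderCharacterSum} and~\ref{theorem:distributionOddOrderCharacterSum}, evaluated at the top of their ranges. The plan is to pick $V$ as large as allowed, check that $\Phi_\chi(V)$ is still strictly larger than $1/p$, and conclude that at least one $K\in\F_p$ is counted. That gives a $\theta=(K+x)/p$ with $|f_\chi(e(\theta))|\ge V\sqrt p$, and by \eqref{eq:max=doublemax} this is the required point.

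\emph{Even $d$.} Take $V=\frac{2}{\pi}(\log_2 p-2\log_3 p)$, which is the endpoint of the range in Theorem~\ref{theorem:distributionEvenOrderCharacterSum}. The lower bound in \eqref{eq:thmEvenOrders} has an absolute constant $c$ (from Theorem~\ref{thm:LowerBoundEvenThm}), and with it
\[
\Phi_\chi(V)\ \ge\ \exp\!\Big(-e^{c}\,e^{\log_2 p-2\log_3 p}\Big)=\exp\!\Big(-e^{c}\,\frac{\log p}{(\log_3 p)^{2}}\Big).
\]
Since $e^c/(\log_3 p)^2\to 0$, the exponent is $o(\log p)$, so $\Phi_\chi(V)\ge p^{-o(1)}>1/p$ for all large $p$. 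Hence $\Phi_\chi(V)\ge 1/p$ and some $K$ is counted. This gives
\[
\max_\theta |f_\chi(e(\theta))|\ \ge\ \tfrac{2}{\pi}(\log_2 p-2\log_3 p)\sqrt p=\Big(\tfrac{2}{\pi}+o(1)\Big)\sqrt p\log\log p.
\]

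\emph{Odd $d$.} The argument is the same. Take $V=\frac{2}{\pi}\cos\frac{\pi}{2d}(\log_2 p-2\log_3 p)$. Then $\frac{\pi}{2\cos(\pi/2d)}V=\log_2 p-2\log_3 p$, and the lower bound of Theorem~\ref{theorem:distributionOddOrderCharacterSum} again gives $\Phi_\chi(V)\ge \exp(-e^{O_d(1)}\log p/(\log_3 p)^2)>1/p$.

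The only delicate point is uniformity in $d$. The even-order constant is absolute, so that case is fine. In the odd case the constant is $O_d(1)$, so for fixed $d$ (as Theorem~\ref{theorem:distributionOddOrderCharacterSum} assumes) the argument goes through as written.

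If $d$ is allowed to grow with $p$, I would proceed as follows.
\begin{itemize}
\item When $d\gg(\log p)^2$, I would use the adaptation mentioned in the remark after that theorem. In that regime $\cos\frac{\pi}{2d}=1+o(1)$.
\item For intermediate $d$, I would track the explicit dependence of $C_d$ in \eqref{eq:valCd-smallOdd}. There is slack to absorb it: $V$ can be reduced by $o(\log_2 p)$ without affecting the leading constant. Taking $V=\frac{2}{\pi}\cos\frac{\pi}{2d}(\log_2 p-A(d)-2\log_3 p)$ lets $A(d)$ absorb $\log C_d$, provided $\log C_d=o(\log_2 p)$.
\end{itemize}
I expect this bookkeeping of the $d$-dependence to be the main obstacle. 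The inequality $\Phi_\chi(V)>1/p$ itself is routine.
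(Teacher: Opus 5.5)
Your proposal is correct and is essentially the paper's argument: the corollary is read off from the lower bounds of Theorems~\ref{theorem:distributionEvenOrderCharacterSum} and~\ref{theorem:distributionOddOrderCharacterSum} at the top of their ranges, where $\Phi_\chi(V)>1/p>0$ forces some $K$ to be counted (in the odd case the $o(1)$ depends on $d$, since that theorem is stated for fixed $d$). One small slip: $e^{\log_2 p-2\log_3 p}=\log p/(\log_2 p)^2$, not $\log p/(\log_3 p)^2$, but this only makes the exponent more comfortably $o(\log p)$.
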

\medskip
\begin{remark}
It is plausible that the maximum over $\theta$ in the case of even characters exceeds that of odd characters. 
This is in line with the results of Granville and Soundararajan~\cite{GranvilleSoundararajan2007} on large character sums, where the parity of the order of the character plays an important role in determining the maximum of the partial sums of the character sum.
\end{remark}

\begin{remark}
\textit{Turyn polynomials} are defined by applying a cyclic shift to the coefficients of a 
Fekete polynomial. They are Littlewood polynomials except for one vanishing coefficient, 
and have been studied in the context of extremal problems for polynomial norms on the unit 
circle, see, for instance, \cite{BorweinChoiFekete,Gunther_Schmidt_2017,mossinghoff2025mahlersproblemturynpolynomials}.
The same results we got in Theorems~\ref{theorem:distributionEvenOrderCharacterSum} and~\ref{theorem:distributionOddOrderCharacterSum} 
for $f_\chi$ may be obtained for generalized Turyn polynomials defined as follows for a shift 
$a\in\Z$: $f_{\chi,a}(z) = \sum_{n=0}^{p-1} \chi(n+a)\, z^n$. 
Indeed, one may carry out the entire analysis with a suitable
modification of the auxiliary function \eqref{def:g-auxiliary}, replacing \(g_{\chi,K}\) by 
    \begin{equation}
        \label{def:g-auxiliary-shiftedChi}
  g_{\chi,K,a}(x) :=  \dfrac{ie_p(Ka)}{z^p-1}\dfrac{f_\chi(z)}{\tau(\chi)},
    \end{equation}
    where $z:=e_p(K+x), \; K\in\Z, \, x\in(0,1)$. All the methods developed in this paper remain valid in this
more general setting, with only minor notational changes.
\end{remark}

\subsection{Organization of the paper}

The paper is organized as follows. In the next section, we recall the necessary analytic tools, introduce the
auxiliary function \(g_{\chi,K}\) associated with \(f_\chi\), and collect
some elementary lemmas needed for the proof of the lower and upper bounds
in our theorems. Section~\ref{section3:proofUpperBound} is devoted to the
proof of the upper bounds for both Theorems~
\ref{theorem:distributionEvenOrderCharacterSum} and~
\ref{theorem:distributionOddOrderCharacterSum}. In
Section~\ref{section4:Part1LowerBound}, we set up the main tools for proving
the lower bounds, including the link between the arithmetic model and the
random one. In Section~\ref{section5:proofLowerBound}, we use the saddle point
 method to derive the lower bound in the even-order case and for large 
 orders.

We should emphazise that we encounter a technical difficulty in the small odd-order case: Taking only the
real part in Section~\ref{section4:Part1LowerBound} leads to a weaker result due to the even--odd
discrepancy. While this loss becomes apparent in
Section~\ref{section5:proofLowerBound}, it does not affect the final scale of
the bounds for large odd order \(d\). For small values of \(d\), however,
additional ideas are needed, and the corresponding lower bounds are proved
separately in Section~\ref{section6:proofoddLowerBound} using an independence
argument for the values of Dirichlet characters in short intervals.

\subsection{Notations and conventions} 
  We write $A \ll B$ or $A = O(B)$ to denote the 
estimate $|A| \leq CB$ for some constant $C$, and $A = o(B)$ to denote the bound $|A| \leq c(p) B$ for some 
$c(p)\underset{p\rightarrow \infty}{\rightarrow} 0$.  We write $A \asymp B$ for $A \ll B \ll A$. 
Unless specified otherwise, asymptotic quantities such as $o(1)$ or $O(1)$ are interpreted as 
$p\rightarrow \infty$, and summations will run over integers. Moreover, $\mathds{1}(A)$ denotes the indicator function of an event $A$.

\subsection{Acknowledgments}
I sincerely thank my PhD advisors, Professor Youness Lamzouri and Professor Thomas Stoll, for their guidance and 
valuable insights throughout the preparation of this article.

\section{Analytic tools}
\label{section:AnalyticTools}

We begin by writing down some analytic tools and standard notation that will be used throughout the paper.  
In particular, we recall the definition of the Gauss sum associated with a Dirichlet character $\chi \pmod p$, given by
\[
\tau(\chi)\ :=\ \sum_{n=1}^{p} \chi(n)\,e_p(n).
\]
It is well known that $|\tau(\chi)|=\sqrt{p}$ and that $f_\chi(e^{2i\pi k /p})= \overline{\chi}(k)\tau(\chi)$ for $k\in \F_p$ 
when $\chi$ is primitive~\cite[Theorems 9.5 and 9.7]{Montgomery-multNT}. However, there is no general technique for computing the complex 
argument of a Gauss sum, except in certain specific cases, such as when $\chi$ is the Legendre symbol.

\medskip

Following the ideas of Conrey et al.~\cite[Equation (2.3)]{ConreyGranvillePoonenSoundararajan2000} in their construction of an auxiliary 
function for Fekete polynomials, we introduce a generalization of this \emph{auxiliary exponential sum} closely related to the generating 
polynomial $f_\chi$ defined in~\eqref{def:fchi}, and whose moments can be estimated more efficiently. This construction will play a central role 
in the proof of the upper and lower bounds in Sections~\ref{section3:proofUpperBound} and~\ref{section5:proofLowerBound}.

\begin{definition}
    For $K\in \F_p$, $x\in (0,1)$ and $\chi \pmod p$ a non-principal Dirichlet character, we write $z=e_p(K+x)$
     and let

\begin{equation} 
    \label{def:g-auxiliary}
 g_{\chi,K}(x) :=  \dfrac{i}{z^p-1}\dfrac{f_\chi(z)}{\tau(\chi)} .
\end{equation}

\end{definition}

We now express $g_{\chi,K}$ in a form that will be more convenient for our subsequent arguments.
\begin{lemma}
    With the notation of the previous definition, we have

\begin{equation} 
    \label{formulaG} g_{\chi,K}(x)  = \frac{i}{p}\displaystyle\sum_{|j| < p/2} \dfrac{\overline{\chi}(K-j)}{e_p(j+x) -1}.
\end{equation}

\end{lemma}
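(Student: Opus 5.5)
We obtain \eqref{formulaG} from the partial fraction decomposition of $1/(z^p-1)$, followed by the evaluation of $f_\chi$ at $p$-th roots of unity via Gauss sums. Write $\omega_k := e_p(k)$ for $k\in\F_p$. Since $z^p-1=\prod_{k\in\F_p}(z-\omega_k)$ and the derivative of $z^p-1$ at $\omega_k$ is $p\,\omega_k^{p-1}=p\,\overline{\omega_k}$, Lagrange interpolation applies. The polynomial $f_\chi$ has degree $\le p-1<p$, so it is determined by its values at the $p$ roots of unity, and
\[
\frac{f_\chi(z)}{z^p-1}=\sum_{k\in\F_p}\frac{f_\chi(\omega_k)}{p\,\overline{\omega_k}\,(z-\omega_k)}=\frac1p\sum_{k\in\F_p}\frac{f_\chi(\omega_k)}{z\overline{\omega_k}-1}.
\]
This identity holds for every $z$ with $z^p\neq1$. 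In particular it holds for $z=e_p(K+x)$ with $x\in(0,1)$, which is the only place where the hypothesis on $x$ is used.

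Next we insert the evaluation $f_\chi(\omega_k)=\overline{\chi}(k)\tau(\chi)$, recalled above for primitive $\chi$. Every nonprincipal character mod a prime $p$ is primitive, so this applies. The case $k=0$ also works: $f_\chi(1)=\sum_n\chi(n)=0=\overline{\chi}(0)\tau(\chi)$ by orthogonality. Dividing by $\tau(\chi)$, which is nonzero since $|\tau(\chi)|=\sqrt p$, and multiplying by $i$ gives
\[
g_{\chi,K}(x)=\frac ip\sum_{k\in\F_p}\frac{\overline{\chi}(k)}{e_p(K+x-k)-1}.
\]

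Finally we reindex by $j=K-k$. As $k$ runs over $\F_p$, $j$ runs over a complete residue system, and since $p$ is odd we may take the representatives with $|j|<p/2$. The summand depends only on $k$ mod $p$: $\overline{\chi}$ is $p$-periodic, and $e_p(j+x)$ is unchanged when $j$ is shifted by a multiple of $p$. Hence
\[
g_{\chi,K}(x)=\frac ip\sum_{|j|<p/2}\frac{\overline{\chi}(K-j)}{e_p(j+x)-1},
\]
which is \eqref{formulaG}.

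The only step needing care is the bookkeeping in the partial fraction step: the factor $\overline{\omega_k}$ from the derivative must combine correctly into $z\overline{\omega_k}=e_p(K+x-k)$. The rest follows directly from the stated Gauss sum facts. For $d=2$ this recovers the formula of Conrey et al.
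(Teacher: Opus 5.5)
Your proof is correct and is essentially the paper's argument. Both use Lagrange interpolation at the $p$-th roots of unity; your derivative $p\,\overline{\omega_k}$ is the paper's product $\prod_{j\neq k}(e_p(k)-e_p(j))=p\,e_p(-k)$. Both then insert $f_\chi(e_p(k))=\overline{\chi}(k)\tau(\chi)$ and reindex with $j=K-k$. Your explicit checks of the $k=0$ term and of primitivity of $\chi$ fill in details the paper leaves implicit.
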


\begin{proof}
By Lagrange's interpolation, we have the following:
\[ f_\chi(z) = \sum_{k=0}^{p-1} f_\chi(e_p(k)) \prod_{\substack{j=0 \\ j \neq k}}^{p-1} \left( \frac{z - e_p(j) }{e_p(k) - e_p(j) } \right). \]
\noindent
Moreover,
\[ \prod_{\substack{j=0 \\ j \neq k}}^{p-1} (z - e_p(j) ) = \frac{z^p - 1}{z - e_p(k)}, \]
\noindent
and 
\[ \prod_{\substack{j=0 \\ j \neq k}}^{p-1} (e_p(k) - e_p(j) ) = e_p(k(p-1))\prod_{j=1}^{p-1} (1 - e_p(j) ) = p e_p(-k). \]
\noindent
Thus, 

\begin{equation}
    \label{formulefp} 
    f_\chi(z) = \sum_{k=0}^{p-1} \dfrac{ f_\chi(e_p(k))e_p(k)}{p}  \frac{z^p - 1}{z - e_p(k)} .
\end{equation}
Therefore, if $K\in \F_p, x\in (0,1)$ and $z=e_p(K+x)$, by (\ref{formulefp}) and the expression of the Gauss sum, we derive the second expression of $g_{\chi,K}$,
\begin{equation}
    \label{calculProof1}
     \frac{i}{z^p-1} \frac{f_\chi(z)}{\tau(\chi)} =  \frac{i}{p}\sum_{k=0}^{p-1} \overline{\chi}(k)\frac{e_p(k)}{z - e_p(k)}
      =  \frac{i}{p}\sum_{|j|<p/2} \overline{\chi}(K-j)\frac{1}{e_p(j+x) - 1}.
\end{equation}

\end{proof}

The Weil bound for character sums, which is a consequence of Weil’s proof of the Riemann Hypothesis for curves over
 finite fields (see \cite{Montgomery-multNT}, Lemma 9.25 for a proof of the following statement), will be a key tool for computing the 
 moments of $g_{\chi,K}$.

\begin{lemma}{\textbf{(Weil)}}
 \label{WeilBound}
    Let $d\in \N$, $p$ be a prime such that $d \mid(p-1)$ and $\chi$ be a character \textnormal{(mod $p$)} of order $d$. If $f\in \mathbb{F}_p[X]$ cannot 
    be written as $g^d \bmod p$ for some $g\in\F_p[X]$, then for $k$ being the number of distinct roots of $f$, the following holds:
    \[  \left| \displaystyle\sum_{x=1}^p \chi(f(x)) \right| \leq (k-1)\sqrt{p}.  \]
    
\end{lemma}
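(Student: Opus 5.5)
We would not reprove Weil's theorem from scratch. Instead we would deduce Lemma~\ref{WeilBound} from the Riemann Hypothesis for curves over finite fields, which we take as a black box. The route is the one behind \cite[Lemma 9.25]{Montgomery-multNT}: attach an $L$-function to the character sum, show it is a polynomial of small degree, and read off the bound from the size of its inverse roots.

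\textbf{Reduction.} Factor $f = c\prod_{i=1}^k (X-a_i)^{e_i}$ over $\overline{\F_p}$, with distinct $a_i$. The hypothesis that $f$ is not of the form $g^d$ means that, after absorbing the constant, not all $e_i$ are divisible by $d$. We may assume this, since if $d\mid e_i$ for all $i$, then $f$ is a constant times a $d$-th power, and that case is excluded by the hypothesis or is trivial. For each $n\ge1$ set
\[
S_n=\sum_{x\in\F_{p^n}}\chi_n(f(x)), \qquad \chi_n:=\chi\circ N_{\F_{p^n}/\F_p},
\]
and define the $L$-function
\[
L(T)=\exp\Bigl(\sum_{n\ge1}S_nT^n/n\Bigr).
\]
The sum in the lemma is $S_1$.

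\textbf{Step 1: $L(T)$ is a polynomial of degree at most $k-1$.} The multiplicativity $\chi_n(f(x))=\chi_n(c)\prod_i\chi_n(x-a_i)^{e_i}$ shows that $L(T)$ is the Dirichlet $L$-function of a character $\psi$ of $\F_p[X]$. Grouping the terms of $S_n$ by minimal polynomials of $x$ gives an Euler product over monic irreducibles $P\nmid f$. This identifies $\psi$ with a Dirichlet character modulo $\prod_{P\mid f}P$, which is nontrivial by the reduction above.

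\begin{itemize}
\item For a nontrivial Dirichlet character modulo a polynomial $m$, the sums over monic polynomials of degree $\ge \deg m$ vanish. Hence $L$ is a polynomial of degree at most $\deg m - 1\le k-1$.
\item The Euler product runs over $P\nmid f$, so $L(T)$ differs from the character sum at the points $x=a_i$ only through the factors attached to the roots of $f$. These contribute $\chi(0)=0$ to the character sum anyway, so they cause no discrepancy.
\item If infinity is unramified, which happens when $d\mid\deg f$, then $L(T)$ has a trivial factor $(1-T)$. Removing it leaves a polynomial of degree at most $k-2$.
\end{itemize}

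\textbf{Step 2: the inverse roots have modulus at most $\sqrt p$.} Write $L(T)=\prod_{j}(1-\omega_jT)$. Then
\[
S_1=-\sum_j\omega_j .
\]
The nontrivial $\omega_j$ are among the Frobenius eigenvalues of the superelliptic curve $y^d=f(x)$. One sees this by counting its points:
\[
\#\{y: y^d=a\}=\sum_{j=0}^{d-1}\chi^j(a),
\]
so the point count of $y^d=f(x)$ over $\F_{p^n}$ equals $p^n+\sum_{j=1}^{d-1}S_n(\chi^j)$ plus contributions at infinity and singular points. The zeta function of the curve therefore factors through the $L(T,\chi^j)$. Weil's theorem then gives $|\omega_j|=\sqrt p$ for the nontrivial roots.

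\textbf{Conclusion.} Summing over at most $k-1$ inverse roots gives $|S_1|\le (k-1)\sqrt p$. In the case where infinity is unramified, we instead get $(k-2)\sqrt p+1\le (k-1)\sqrt p$.

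\textbf{Main obstacle.} The delicate point is Step 2: matching $L(T)$ exactly with a factor of the zeta function of a smooth projective model of $y^d=f(x)$. This requires controlling the singular points and the points at infinity, so as to be sure that no spurious inverse roots of modulus $p^{1/2}$ or larger, or of weight $1$, survive. I would handle this by passing to the normalization and using that the $\chi^j$-isotypic part of $H^1$ has dimension equal to the degree computed in Step 1. Alternatively, to avoid geometry entirely, one can use Stepanov's elementary method (as in Schmidt's book), which proves $|S_n|\ll_k p^{n/2}$ for all $n$ directly. Combined with the rationality from Step 1, the standard tensor-power trick $\sum_j\omega_j^n=O(p^{n/2})$ for all $n$ then forces $|\omega_j|\le\sqrt p$.
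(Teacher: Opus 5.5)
The paper does not prove this lemma; it quotes Weil's theorem from the literature. Your route is the standard one: show $L(T)$ is rational via a character on $\F_p[X]/(\prod_{P\mid f}P)$ of degree $\le k-1$, then apply the Riemann Hypothesis to $y^d=f(x)$. Step~1 is sound. The only slip there is that $\psi(h)$ also carries the factor $\chi\bigl((-1)^{\deg f}c\bigr)^{\deg h}$, so the trivial factor is $1-\chi(c)T$ rather than $1-T$; this is harmless.

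Your reduction, however, is wrong as written: the case $d\mid e_i$ for all $i$ is neither excluded nor trivial. Take $f=c\,g^d$ with $c\in\F_p^*$ not a $d$-th power. Then $f$ is not a $d$-th power in $\F_p[X]$, so the hypothesis holds. Yet $\bigl|\sum_x\chi(f(x))\bigr|=p-\#\{x\in\F_p: g(x)=0\}$, and for $f=cX^d$ this equals $p-1>0=(k-1)\sqrt p$. So the lemma as literally stated is false. You should say so and replace the hypothesis by ``$f$ is not a constant multiple of a $d$-th power'', with $k$ counted in $\overline{\F_p}$. This costs the paper nothing. Every polynomial it feeds into the lemma is monic ($f_{\mathbf{j},k_1}$, and $\prod_j(X+m_j)^{t_j}$ in Proposition~\ref{prop:WeilFixValues}), and for monic $f$ the two conditions agree.

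The second problem is in Step~2. Two claims there need $\gcd(d,e_1,\dots,e_k)=1$:
the form $\#\tilde C(\F_{p^n})=p^n+1-\sum_i\alpha_i^n$ of Weil's theorem, which needs $y^d=f(x)$ geometrically irreducible;
and your claim that the zeta function factors through the $L(T,\chi^j)$ for $1\le j\le d-1$, which needs every $\chi^j\circ f$ nondegenerate.
Nondegeneracy of $\chi\circ f$ alone does not give this. For $d=4$ and $f=(X-a)^2(X-b)^2$ (a case arising in Proposition~\ref{prop:WeilFixValues}), the curve splits as $(y^2-(x-a)(x-b))(y^2+(x-a)(x-b))=0$. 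Also $S_n(\chi^2)=p^n-2$ contributes weight-$2$ terms that your matching does not account for.

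The fix is a preliminary reduction. Let $\delta=\gcd(d,e_1,\dots,e_k)$ and write $f=c\,h^\delta$ with $h\in\F_p[X]$ monic, having the same $k$ roots. Then $\chi(f(x))=\chi(c)\,\chi^\delta(h(x))$, where $\chi^\delta$ has order $d/\delta\ge 2$. For the pair $(\chi^\delta,h)$, every nontrivial power of the character is nondegenerate on $h$, and $y^{d/\delta}=h(x)$ is geometrically irreducible.

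After this reduction your ``main obstacle'' is easy. The affine count $p^n+\sum_j S_n(\chi^j)$ differs from $p^n+1-\sum_i\alpha_i^n$ only by weight-$0$ terms from singular points and points at infinity. Each $\omega_{\ell,j}$ enters with positive multiplicity, so linear independence of the sequences $n\mapsto\beta^n$ forces each $\omega_{\ell,j}$ to be some $\alpha_i$ or a root of unity. Hence $|\omega_{\ell,j}|\le\sqrt p$, and no dimension count on isotypic parts of $H^1$ is needed.
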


\medskip

\section{Proof of the upper bound}
\label{section3:proofUpperBound}

In this section, we will prove the following results:

\begin{theorem}
    \label{thm:UpperBoundBothTheorems}
    Let $p$ be a large prime. Uniformly for all Dirichlet characters $\chi\pmod p$ of order $d$ and
    for all real numbers $ 1\leq V \leq \frac{2}{\pi}(\log_2 p - 2\log_3 p)$, we have

\begin{align}
    \Phi_\chi(V) &\leq  \exp\left( -C_d^+ \exp \left(\frac{\pi}{2}V  \right) \right),
\end{align}
where $C_d^+ = 28\cdot 10^5\exp\left(-\dfrac{\pi}{\delta_d} -\gamma  \right)$ with $\delta_d$ defined in~\eqref{def:deltad}.

\end{theorem}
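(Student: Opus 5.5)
We reduce the maximum of $|f_\chi|$ over the $K$-th arc to a maximum of $|g_{\chi,K}(x)|$, and then bound high moments of a uniformly truncated version of $g_{\chi,K}$ using the Weil bound (Lemma~\ref{WeilBound}).

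\emph{Reduction to $g_{\chi,K}$.} For $z=e_p(K+x)$ we have $|z^p-1|=2|\sin(\pi x)|\le 2$ and $|\tau(\chi)|=\sqrt p$. The definition \eqref{def:g-auxiliary} then gives
\[
\Bigl|\frac{f_\chi(z)}{\sqrt p}\Bigr| \le 2\,|g_{\chi,K}(x)|,
\]
so it suffices to control $\max_{x\in[0,1]}|g_{\chi,K}(x)|$ at level $V/2$. Rewriting the bound this way loses nothing at the scale $e^{\pi V/2}$, because the threshold becomes $|g|\ge V/2$. Near $x\to0,1$ the term $j=0$ (respectively $j=-1$) of \eqref{formulaG} blows up. Those endpoint values are Gauss sums of modulus $\sqrt p$ and cannot be large, so we first treat the boundary region separately. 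For $x\in(0,1)$ we expand
\[
\frac{1}{e_p(j+x)-1}=\frac{p}{2\pi i (j+x)}-\frac12+O\Bigl(\frac{|j|+1}{p}\Bigr).
\]
Since $\sum_j\overline\chi(K-j)=0$, the constant $-\tfrac12$ drops out. The main term is
\[
g_{\chi,K}(x)\approx \frac{1}{2\pi}\sum_{|j|<p/2}\frac{\overline\chi(K-j)}{j+x},
\]
and the tail $|j|>Y$ is handled by partial summation with Pólya--Vinogradov, contributing $O(\sqrt p\log p/Y)$. After cutting the sum at $|j|\le Y=p^{1/2}(\log p)^{3}$, we are left with $\frac{1}{2\pi}\bigl(\tfrac{\overline\chi(K)}{x}+\tfrac{\overline\chi(K+1)}{x-1}\bigr)+\sum_{1\le|j|\le Y,\,j\ne-1}\cdots$. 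The two singular terms combine, for each $x$, to a quantity of size $O(1)$ after the boundary normalisation. Their worst-case sum over $x$ produces exactly the parity-dependent constant $\delta_d$: for $\chi(K),\chi(K+1)$ in the $d$-th roots of unity, $\max_x\bigl|\tfrac{a}{x}+\tfrac{b}{1-x}\bigr|$ is governed by the angle between $a$ and $-b$, which is $0$ for even $d$ and $\ge\pi/d$ for odd $d$. This explains the appearance of $e^{-\pi/\delta_d}$ in $C_d^+$.

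\emph{Uniformity in $x$.} The remaining sum $G_K(x)=\sum_{2\le |j|\le Y}\overline\chi(K-j)/(j+x)$ is Lipschitz in $x$ with constant $\sum 1/j^2=O(1)$. Its variation over $[0,1]$ is therefore controlled by the sum $\sum_j |\overline\chi(K-j)|\,|1/(j+x)-1/j|\le \sum 1/j^2$. Hence
\[
\max_x|G_K(x)|\le |G_K(0)|+O(1).
\]
For the sum $H_K=\sum_{2\le|j|\le Y}\overline\chi(K-j)/j$ we must then show
\[
\frac1p\#\{K: |H_K|\ge \pi V - c\}\le \exp(-C e^{\pi V/2}),
\]
with the constant from the even/odd boundary term absorbed.

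\emph{Moments.} We bound the $2k$-th moment
\[
\frac1p\sum_K |H_K|^{2k}=\sum_{j_1,\dots,j_{2k}}\frac{1}{j_1\cdots j_{2k}}\cdot\frac1p\sum_K \chi\Bigl(\prod_{i\le k}(K-j_i)\Bigr)\overline\chi\Bigl(\prod_{i>k}(K-j_i)\Bigr).
\]
We write $\overline\chi(a)=\chi(a^{d-1})$, so that the inner sum is $\sum_K\chi(F(K))$. By Lemma~\ref{WeilBound}, this inner sum is $\le 2k/\sqrt p$ unless $F$ is a $d$-th power, i.e.\ unless the multiset of $j$'s is ``balanced mod $d$''. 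The off-diagonal contribution is at most $(2\log Y)^{2k}\,2k/\sqrt p$. The diagonal terms coincide with $\E|\sum_j \X_j/j|^{2k}$ for independent random variables $\X_j$ uniform on the $d$-th roots of unity; for $d$ even, $-1$ is among them, and the truncation $|j|\le Y$ is harmless. We then use the standard bound
\[
\E\Bigl|\sum_j\frac{\X_j}{j}\Bigr|^{2k}\le\bigl(2\log k+2\gamma+O(1)\bigr)^{2k}.
\]
This comes from splitting at $|j|\le k$, where the sum is $\le 2\sum_{j\le k}1/j$, and $|j|>k$, which is subgaussian with variance $\ll 1/k$. Markov's inequality then gives a bound $\bigl((2\log k+2\gamma+c)/(\pi V)\bigr)^{2k}$. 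Choosing $k\asymp e^{\pi V/2-\gamma-c'}$ yields $\exp(-C e^{\pi V/2})$. The off-diagonal error is negligible provided $k\log\log p\le \tfrac14\log p$, which holds precisely when $V\le\frac2\pi(\log_2p-2\log_3p)$.

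\emph{Main obstacle.} The hardest step is the treatment of the two boundary terms $j=0,-1$ uniformly in $x$. This is where the maximum over the whole arc differs from the midpoint problem, and where the parity-dependent constant $\delta_d$ must be tracked. I would first bound $\max_x$ of these terms explicitly in terms of $\chi(K)\overline{\chi(K+1)}$ by elementary geometry. After that, everything is absorbed into the $O(1)$ shift in $V$, and the explicit numerical constant comes from bookkeeping.
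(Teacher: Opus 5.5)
\textbf{The gap: the final Markov step loses a factor $V$.} Your intermediate steps are correct up to additive $O(1)$ losses in $V$. These are the P\'olya--Vinogradov truncation, the Lipschitz reduction to $x=0$, the Weil-bound comparison of $\frac1p\sum_K|H_K|^{2k}$ with $\mathbb{E}\bigl|\sum_j \mathbb{X}_j/j\bigr|^{2k}$, and the bound $(2\log k+2\gamma+O(1))^{2k}$ for the random moment. The final Markov step, however, does not give $\exp(-Ce^{\pi V/2})$. The moment bound $(2\log k+2\gamma+c)^{2k}$ is essentially the worst-case size of a head of length $k$, raised to the power $2k$. So Markov at the threshold $\pi V-c$ produces the ratio $\frac{2\log k+2\gamma+c}{\pi V-c}$. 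This is $<1$ only when $2\log k\le \pi V-O(1)$, and for $k$ of the required size $e^{\pi V/2-O(1)}$ it equals $1-O(1/V)$. Concretely, with your choice $k=e^{\pi V/2-\gamma-c'}$ the ratio is $1-\frac{2c'+O(1)}{\pi V}$, and you obtain only $\exp\bigl(-\frac{C}{V}e^{\pi V/2}\bigr)$. No choice of $k$ helps, because $\max_k 2k\log\frac{\pi V}{2\log k+c}\asymp e^{\pi V/2}/V$. The factor $V$ lost in the exponent cannot be absorbed into a constant.

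\textbf{What the paper does instead.} The missing idea is to keep the head out of the moment computation altogether:
\begin{enumerate}
\item Split at $J=10^5k$, and bound the head $|j|<J$ \emph{deterministically} by its worst case over $\mathbb{U}_d$. Lemma~\ref{lemma:maxOfSumUnitCircle} gives $\frac{\delta_d}{\pi}\log k+C_1$.
\item Choose $k$ so that this equals $V-1$. Then the event $\max_x|f_\chi(e_p(K+x))|\ge V\sqrt p$ forces the tail $\sum_{J\le|j|<p/2}$ to be at least $1$.
\item Every index in the tail exceeds $J\gg k$, so the diagonal count of Proposition~\ref{prop:MajorationSommeMN} bounds the tail's $2k$-th moment by $(32k/J)^k$ plus the Weil error.
\item Markov at the \emph{fixed} threshold $1$ then gives $e^{-ck}=\exp(-Ce^{\pi V/\delta_d})$ with no loss.
\end{enumerate}
Your Lipschitz observation is useful precisely here. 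The tail varies in $x$ by only $O(1/J)$, which would make the paper's $k^4$-point discretisation unnecessary.

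\textbf{Where $\delta_d$ comes from.} Your account of $\delta_d$ is also mistaken. The factor $2\sin(\pi x)$ must be kept from the start. Indeed $\sup_{x}|g_{\chi,K}(x)|=\infty$ for $K\neq 0$, so the reduction to ``$|g_{\chi,K}|\ge V/2$'' is vacuous as written. Once the factor is kept, the two terms $j=0,-1$ contribute $O(1)$ whatever the parity of $d$, and they cannot generate $\delta_d$. In the paper, $\delta_d$ enters through the worst-case bound for the whole head of length $J\asymp k$ (Lemma~\ref{lemma:maxOfSumUnitCircle}), which is exactly the step your approach omits. The factor $e^{-\pi/\delta_d}$ in $C_d^+$ comes from placing the threshold $1$ on the tail. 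The proof takes $C_d^+=7\exp\bigl(-\frac{\pi}{\delta_d}(C_1+1)\bigr)=\frac74\cdot 10^{-5}e^{-\gamma-\pi/\delta_d}$ and obtains the bound up to an absolute implied constant. The printed $28\cdot 10^5$ is evidently an inversion of this; a constant that large would contradict Theorem~\ref{thm:LowerBoundEvenThm}. Either way, the explicit constant cannot come out of ``bookkeeping'' in your argument, because it has no head/tail split with these parameters.
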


By using \eqref{formulaG}, we can extend by continuity the function $x\mapsto 2\sin(\pi x)g_{\chi,K}(x)$, $x\in (0,1)$ to $[0,1]$, 
and this allows us to rewrite $\Phi_\chi(V)$ as
\[
\Phi_\chi(V)\ =\ \frac{1}{p}\,\Bigl|\Bigl\{K\in\F_p:\ \max_{x\in[0,1]}\left|2\sin(\pi x)g_{\chi,K}(x)\right|\ \ge V\Bigr\}\Bigr|,
\]
so in order to prove the upper bounds of Theorems~\ref{theorem:distributionEvenOrderCharacterSum} and~\ref{theorem:distributionOddOrderCharacterSum},
we first smooth the sum $g_{\chi,K}$ and then split it at a parameter $J$ to be chosen later,
\begin{align} \label{eq:gpkSplitSum}
g_{\chi,K}(x) = \frac{1}{2\pi}\sum_{|j| \leq J} \dfrac{\overline{\chi}(K-j)}{j+x} + \frac{i}{p}\sum_{J < |j| < p/2} \dfrac{\overline{\chi}(K-j)}{e_p(j+x)-1} + O\!\left( \frac{J}{p}\right).
\end{align}

To derive this expression, one could use the following estimate, which holds uniformly for $|j| < p/2$:
\begin{equation}
    \label{eq:approxCoeffSerieFourier}
\frac{i}{p(e_p(j+x))-1} = \frac{1}{2\pi}\frac{1}{j+x} + O\!\left(\frac{1}{p}\right).
\end{equation}

We shall bound the sum over the range \(\{|j|\le J\}\) by replacing the character with random values, and we will control the size of the second
 sum (over the range \(\{J<|j|< p/2\}\)) using the method of moments.

\medskip

First, we will need a few useful lemmas in order to derive the upper bounds of Theorems~\ref{theorem:distributionEvenOrderCharacterSum} and~\ref{theorem:distributionOddOrderCharacterSum}.
\begin{lemma}
    \label{lemma:HarmonicEstimation}
    For $n\in \N$ and $x\in (0,1)$, let $H_n(x):= \displaystyle \sum_{k=0}^{n-1} \dfrac{1}{k+x}$.
    \newline
    The following holds for all $x\in(0,1]$ as $n\rightarrow\infty$,
    \[
    H_n(x) = \log n - \psi(x) + O\!\left(\frac{1}{n}\right),
    \]
    where $\psi$ is the Digamma function~\cite[p.258]{AbramovitzStegun-HandbookOfMathFunction}.
    Moreover, 
    \[ -\psi\left(\frac{1}{2}\right) =  \gamma + 2\log 2,
    \]
    where $\gamma$ is the Euler–Mascheroni constant, and for small deviations 
    around $\frac{1}{2}$, if $\varepsilon_n = O\!\left(\frac{1}{\log n}\right)$, we have the following\textnormal{:}  
    \[ H_n\left(\frac{1}{2} + \varepsilon_n\right) = H_n\left(\frac{1}{2}\right) + O\!\left(\frac{1}{\log n}\right).
    \]
\end{lemma}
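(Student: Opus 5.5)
The plan is to derive all three assertions of Lemma~\ref{lemma:HarmonicEstimation} from the standard series representation of the Digamma function,
\[
\psi(x) \;=\; -\gamma + \sum_{k=0}^{\infty}\left(\frac{1}{k+1}-\frac{1}{k+x}\right), \qquad x>0,
\]
which we may quote from \cite[p.258]{AbramovitzStegun-HandbookOfMathFunction}. Truncating this series at $k=n-1$ gives
\[
H_n(x) \;=\; \sum_{k=0}^{n-1}\frac{1}{k+1} \;-\; \psi(x) \;-\; \gamma \;+\; R_n(x),
\qquad
R_n(x)\;:=\;\sum_{k\ge n}\left(\frac{1}{k+1}-\frac{1}{k+x}\right).
\]
Each term of $R_n(x)$ equals $\frac{x-1}{(k+1)(k+x)}$. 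For $x\in(0,1]$ its absolute value is at most $1/k^2$, so $R_n(x)=O(1/n)$ uniformly in $x$. Combining this with $\sum_{k=1}^{n}\frac1k=\log n+\gamma+O(1/n)$, the $\gamma$'s cancel and we get $H_n(x)=\log n-\psi(x)+O(1/n)$ uniformly on $(0,1]$. The only point that needs care is uniformity as $x\to 0^+$. It is automatic here: the singular term $1/x$ sits inside both $H_n(x)$ and $-\psi(x)$ and cancels exactly, and the tail bound used nothing except $0<x\le 1$.

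For the value at $1/2$, use the same series:
\[
-\psi(\tfrac12)=\gamma+\sum_{k\ge0}\left(\frac{2}{2k+1}-\frac{1}{k+1}\right)
=\gamma+2\sum_{k\ge0}\left(\frac{1}{2k+1}-\frac{1}{2k+2}\right)
=\gamma+2\log 2,
\]
where the last sum is the alternating harmonic series, equal to $\log 2$. Alternatively, one can invoke the duplication formula $\psi(2x)=\tfrac12\psi(x)+\tfrac12\psi(x+\tfrac12)+\log 2$ at $x=\tfrac12$ together with $\psi(1)=-\gamma$.

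For the perturbation statement, take $n$ large enough that $|\varepsilon_n|\le 1/4$, so $x=\tfrac12+\varepsilon_n$ stays in $[\tfrac14,\tfrac34]$. Apply the mean value theorem termwise:
\[
\left|H_n(\tfrac12+\varepsilon_n)-H_n(\tfrac12)\right|\;\le\;|\varepsilon_n|\sum_{k\ge0}\frac{1}{(k+\tfrac14)^2}\;\ll\;|\varepsilon_n|\;=\;O\!\left(\frac{1}{\log n}\right).
\]
Equivalently, from the first part, the difference is $\psi(\tfrac12)-\psi(\tfrac12+\varepsilon_n)+O(1/n)$, and $\psi'$ is bounded on $[\tfrac14,\tfrac34]$. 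None of these steps is a real obstacle; the main thing to check is the uniform tail bound for $R_n(x)$ near $x=0$.
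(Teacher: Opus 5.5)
Your proof is correct, and for the first assertion it takes a slightly different route from the paper. The paper quotes the identity $H_n(x)=\psi(n+x)-\psi(x)$, which comes from iterating $\psi(z+1)=\psi(z)+1/z$. It then applies the large-argument asymptotic expansion of $\psi$, which gives $\psi(n+x)=\log n+O(1/n)$ directly. The paper simply cites the value $\psi(\tfrac12)=-\gamma-2\log 2$ rather than deriving it. For the perturbation statement it Taylor-expands $\psi$ around $\tfrac12$ using $\psi'(\tfrac12)=\pi^2/2$, which is exactly your second variant.

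Your route truncates the series $\psi(x)=-\gamma+\sum_{k\ge0}\bigl(\frac{1}{k+1}-\frac{1}{k+x}\bigr)$ and needs only that series together with $\sum_{k\le n}1/k=\log n+\gamma+O(1/n)$. This makes it more self-contained, and it shows the uniformity on $(0,1]$ explicitly, including as $x\to0^+$. The paper's version is shorter because the tail estimate is absorbed into the Stirling-type expansion of $\psi$ at $n+x\ge n$. Your termwise mean value argument also gives the slightly sharper bound $O(|\varepsilon_n|)$ uniformly in $n$, with no extra $O(1/n)$ term.
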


\begin{proof}
    From \cite{AbramovitzStegun-HandbookOfMathFunction}, p.258, equation (6.3.6), we get that for $n\geq 1, x\in (0,1]$,
   \[ H_n(x) = \psi(n+x) - \psi(x).
   \]
    The first assertion follows from the asymptotic development of $\psi$ 
    \cite[p.259, equation (6.3.18)]{AbramovitzStegun-HandbookOfMathFunction}, whereas the second assertion comes from equation (6.3.3), p.258.
    
    As for the last one, since $\psi'\left( \frac{1}{2}\right)= \frac{\pi^2}{2}$ \cite[p.260, equation (6.4.4)]{AbramovitzStegun-HandbookOfMathFunction},
    a Taylor expansion of $\psi$ around $\dfrac{1}{2}$ gives us the desired estimate.
\end{proof}

Let $\U_d := \{e_d(k) :\; 0\le k < d\}$ in the statement of the next lemma.

\begin{lemma}\label{lemma:maxOfSumUnitCircle}

Let $n \ge 1$ and $d\ge 2$. Then the following holds as $n\rightarrow\infty$\textnormal{:} 

\[
\max_{\substack{x\in(0,1) \\ (a_j)_{|j|\leq n}\in \U_d^{2n+1}}} \left| \sin(\pi x)\sum_{|j| \leq n} \dfrac{a_j}{j+x } \right| =
\begin{cases}
 2\cos\frac{\pi}{2d}\left( \log n + \gamma + 2\log 2 + o(1)\right), \text{ if $d$ is odd;}
 \\
 2\log n + 2\gamma + 4\log 2 + o(1), \text{ if $d$ is even;}
\end{cases}
\]
 and the implicit constant is absolute, i.e., it is independant of $d$.
\end{lemma}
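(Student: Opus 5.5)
The plan is to reduce the maximisation over the $a_j$ to an explicit maximisation over a single direction, using that the weights $w_j := 1/(j+x)$ are real with known signs. For fixed $x\in(0,1)$ we have $w_j>0$ for $0\le j\le n$ and $w_j<0$ for $-n\le j\le -1$. We write
\[
\Bigl|\sum_{|j|\le n} a_j w_j\Bigr| = \max_{|u|=1} \Re\Bigl(\bar u \sum_{|j|\le n} a_j w_j\Bigr).
\]
Exchanging the two maxima, for fixed $u$ the optimal choice of each $a_j\in\U_d$ decouples. This gives
\[
\max_{(a_j)}\Bigl|\sum_{|j|\le n} a_j w_j\Bigr| = \max_{|u|=1}\bigl( A(u)\,S_+(x) + B(u)\,S_-(x)\bigr),
\]
where $A(u)=\max_{a\in\U_d}\Re(\bar u a)$ and $B(u)=\max_{a\in\U_d}\Re(-\bar u a)$. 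The two sums are
\[
S_+(x)=\sum_{k=0}^{n}\frac1{k+x}=H_{n+1}(x), \qquad S_-(x)=\sum_{k=1}^{n}\frac{1}{k-x}=H_n(1-x)-\frac1{1-x}+\frac1{n-x+0}\cdot 0 ,
\]
or more simply $S_-(x)=H_{n+1}(1-x)-\frac{1}{1-x}$ up to the shift convention. Lemma~\ref{lemma:HarmonicEstimation} then gives $S_\pm = \log n + O_x(1)$, with $S_+ + S_- = 2\log n - \psi(x)-\psi(1-x)+O(1/n)$ after accounting for the boundary terms.

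Next we compute the direction maximum. The points of $\U_d\cup(-\U_d)$ lie on the circle, and $A(u)=\cos\alpha$, where $\alpha$ is the angular distance from $u$ to $\U_d$; likewise $B(u)=\cos\beta$ with $\beta$ the distance to $-\U_d$.

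If $d$ is even, then $-\U_d=\U_d$, and $u=1$ gives $A=B=1$. Since $A,B\le1$ always, the maximum is exactly $S_+(x)+S_-(x)$.

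If $d$ is odd, the $2d$ points of $\U_d\cup(-\U_d)$ are equally spaced at angle $\pi/d$ and alternate between the two sets. Hence we may take $\alpha\in[0,\pi/d]$ and $\beta=\pi/d-\alpha$, and the quantity to maximise is $S_+\cos\alpha+S_-\cos(\pi/d-\alpha)$. This is the projection of a fixed planar vector, so its maximum is
\[
\sqrt{S_+^2+S_-^2+2S_+S_-\cos(\pi/d)} = 2\cos\tfrac{\pi}{2d}\cdot\frac{S_++S_-}{2}\sqrt{1+\tan^2\!\tfrac{\pi}{2d}\,\tfrac{(S_+-S_-)^2}{(S_++S_-)^2}}.
\]
The square root factor is $1+O\bigl((S_+-S_-)^2/\log^2 n\bigr)$ uniformly in $d$, because $\cos\frac{\pi}{2d}\tan^2\frac{\pi}{2d}\le 1$ for $d\ge3$.

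It remains to maximise over $x$ the function $F(x)=\sin(\pi x)\,(S_+(x)+S_-(x))$, times $\kappa_d:=1$ or $\cos\frac{\pi}{2d}$ as appropriate. At $x=1/2$ we have $S_+=S_-$, so the odd-case correction vanishes. Lemma~\ref{lemma:HarmonicEstimation} together with $-\psi(1/2)=\gamma+2\log2$ then gives the value $2\log n+2\gamma+4\log2+o(1)$. Multiplying by $\kappa_d$, this value is attained with the optimal $a_j$, which proves the lower bound.

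For the upper bound we need the maximum over $x$ to occur near $1/2$. Away from the endpoints, $-\psi(x)-\psi(1-x)$ is bounded. Near $0$ or $1$, the singular parts $1/x$ and $1/(1-x)$ are tamed by $\sin(\pi x)\le\pi x$, so $F(x)\le 2\sin(\pi x)\log n+O(1)$ on all of $(0,1)$. This already forces any near-maximiser to satisfy $|x-1/2|\ll(\log n)^{-1/2}$. In that window we use the Taylor expansion $\sin(\pi x)=1-\frac{\pi^2}{2}\varepsilon^2+O(\varepsilon^4)$, with $\varepsilon=x-1/2$, together with the smoothness of $\psi$ near $1/2$ (the last statement of Lemma~\ref{lemma:HarmonicEstimation}). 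This gives $F(1/2+\varepsilon)\le F(1/2)-\pi^2\varepsilon^2\log n+O(|\varepsilon|)$, hence $\max F=F(1/2)+O(1/\log n)$. In the odd case, in this window $(S_+-S_-)^2=O(\varepsilon^2)$, so the correction is negligible.

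The step needing most care is this last uniform control in $x$. It requires a clean uniform inequality near the endpoints, where the harmonic asymptotics of Lemma~\ref{lemma:HarmonicEstimation} are not uniform because of the $1/x$ term. I would handle it by splitting off the $k=0$ term of $S_+$ (and the corresponding term of $S_-$) explicitly, and applying the digamma asymptotics only to the remainder, which is uniform in $x\in[0,1]$.
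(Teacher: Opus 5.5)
Your proposal is correct and follows essentially the paper's route. You dualise via $|z|=\max_\theta\Re(e^{-i\theta}z)$, decouple the $a_j$ to obtain $\cos\alpha\,S_+ + \cos(\pi/d-\alpha)\,S_-$ with $S_+=H_{n+1}(x)$ and $S_-=H_n(1-x)$, localise $x$ near $1/2$, and optimise at $\alpha=\pi/(2d)$; your closed form $|S_++e^{i\pi/d}S_-|$ and explicit localisation argument are a more careful version of the paper's sketch. Two small fixes are needed. First, $S_-(x)=\sum_{k=1}^n(k-x)^{-1}$ is exactly $H_n(1-x)$, and the variants with $-\frac{1}{1-x}$ you wrote would shift the constant. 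Second, in the odd case your bound on $F$ alone does not force $x$ near $1/2$ when $\cos\frac{\pi}{2d}<1$, so run the localisation on $\sin(\pi x)|S_++e^{i\pi/d}S_-|\le\cos\frac{\pi}{2d}\,\sin(\pi x)(S_++S_-)+\sin(\pi x)|S_+-S_-|$, whose last term is $O(1)$.
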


\begin{proof}
For even $d$, since $-1\in \U_d$, the maximum is reached when $a_j=\mathrm{sgn}(j+x)$ for all $j\in \Z$, and the result follows from Lemma~\ref{lemma:HarmonicEstimation}.

\medskip

For odd $d$, to compute the absolute value in the statement of the lemma we will use the fact that for $z\in\C$, we have

\[ |z| =  \max_{ \theta \in [0,2\pi] } \Re(e^{-i\theta}z). \]
\noindent
Fix \(\theta\in [0,2\pi]\) and write
\[
\delta=\delta(\theta):=\min_{0\leq k < d}\left|\,\theta-2\pi k/d\,\right|\in[0,\pi/d].
\]
Then
\[
\max_{u\in\mathbb U_d}\Re(e^{-i \theta}u)=\cos\delta.
\]
For the minimum over \(\mathbb U_d\), since $d$ is odd, the closest angle to \(\theta+\pi\) is at distance \(\pi/d-\delta\), hence
  \[
  \min_{u\in\mathbb U_d}\Re(e^{-i \theta}u)=-\cos(\pi/d-\delta).
  \]
This gives us
\begin{align*} 
 & \max_{ (a_j)^\Z\in \U_d^\Z} \Re\left(  \sum_{0\leq j \leq n} \dfrac{e^{-i \theta}a_j}{j+x } - \sum_{0 \leq j <n} \dfrac{e^{-i \theta}a_{-1-j}}{j+1-x } \right) \\
 = \;&  \cos( \delta) H(n+1,x) + \cos(\pi/d - \delta)H(n,1-x).
\end{align*}

For large \(n\), the presence of the factor $\sin(\pi x)$ forces the maximizer in $x$ 
to occur at \(x=\frac{1}{2}+O\!\left(\frac{1}{\log n}\right)\).
Thus we want to maximize
\[
\cos\delta + \cos(\pi/d - \delta),
\]
and the maximum is attained at $\delta = \frac{\pi}{2d}$. The result follows by using Lemma~\ref{lemma:HarmonicEstimation}.

\end{proof}

\medskip

Having obtained a bound for the sum over the range $\{|j|\le J\}$ in Lemma~\ref{lemma:maxOfSumUnitCircle} 
(by taking $n=J$), we now turn to the second sum, over the range $\{J<|j|< p/2\}$. 

\medskip
Before that, we shall define, for 
$\textbf{j} =(j_1,\ldots,j_{k})$ a $k$-tuple and $1\leq k_1 \leq k$, the following polynomial:

\begin{equation}
    \label{eq:def_polynomeTuple}
    f_{\textbf{j},k_1}(X) := \prod_{h=1}^{k_1} (X-j_h)^{d-1}\prod_{h=k_1+1}^{k}(X-j_h) .
\end{equation}

\begin{prop}
    \label{prop:MajorationSommeMN}
    Let $\{c_j\}_{j\in \Z}$ be a sequence of complex-valued functions of a variable $x\in [0,1]$ such that $|c_j(x)| \leq \frac{C_0}{|j|-1}$ 
    for $|j| \geq 2$, where $C_0$ is a positive constant. Let $d\geq 2$ be an integer, $p\equiv 1 \pmod d$ a large prime and 
    $1\leq M < N < \frac{p}{2}$ be positive integers. If $\chi \pmod p$ is a character of order $d$, then, for all positive integers $k$, we have
    
\begin{equation*}
    \frac{1}{p} \sum_{K\in\F_p} \left| \sum_{M < |j| < N}  \overline{\chi}(K-j)c_j(x) \right|^{2k}   \leq  (16C_0^2)^{k}  \sum_{l=1}^k \left(\dfrac{l}{M} \right)^{2k-l} + \dfrac{2k}{\sqrt{p}} \left(2C_0\log\frac{N}{M} \right)^{2k}  .
\end{equation*}
\noindent
Moreover, if $k\leq \frac{M}{e}$, we can simplify the right hand side\textnormal{:} 
\begin{equation}
    \frac{1}{p} \sum_{K\in\F_p} \left| \sum_{M < |j| < N}  \overline{\chi}(K-j)c_j(x) \right|^{2k}   \leq  \left(\dfrac{32C_0^2k}{M} \right)^{k}  + \dfrac{2k}{\sqrt{p}} \left(2C_0\log\frac{N}{M} \right)^{2k}  .
\end{equation}

\end{prop}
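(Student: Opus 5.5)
We expand the $2k$-th power and average over $K$ first, treating $x$ as fixed. Write $S(K)=\sum_{M<|j|<N}\overline{\chi}(K-j)c_j(x)$. Then
\[
|S(K)|^{2k}=\sum_{j_1,\dots,j_{2k}} c_{j_1}\cdots c_{j_k}\overline{c_{j_{k+1}}\cdots c_{j_{2k}}}\;\overline{\chi}\Bigl(\prod_{h\le k}(K-j_h)\Bigr)\chi\Bigl(\prod_{h>k}(K-j_h)\Bigr).
\]
Since $\chi$ has order $d$, we have $\chi(y)=\overline{\chi}(y)^{d-1}$ for $y\ne0$. Hence the character factor equals $\overline{\chi}(f_{\mathbf{j},k}(K))$, with $f_{\mathbf{j},k}$ as in \eqref{eq:def_polynomeTuple}, after reordering the tuple so that the conjugated indices carry exponent $d-1$. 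The values $K=j_h$ contribute $0$ on both sides, so this identity is harmless.

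Averaging over $K\in\F_p$ splits the tuples into two classes:
\begin{itemize}
\item \emph{Diagonal} tuples, for which $f_{\mathbf{j},k}$ is a $d$-th power mod $p$. Since $|j_h|<p/2$, distinct integers remain distinct mod $p$. So this happens exactly when, for each distinct value $a$ among the $j_h$, the number of occurrences among the first $k$ positions minus the number among the last $k$ is $\equiv 0\pmod d$.
\item \emph{Non-diagonal} tuples. For these, Lemma~\ref{WeilBound} bounds the average by at most $2k/\sqrt p$, since there are at most $2k$ distinct roots.
\end{itemize}

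For the non-diagonal part we use $\sum_{M<|j|<N}|c_j|\le 2C_0\sum_{M<j<N}\frac1{j-1}\le 2C_0\log\frac NM$, up to a harmless adjustment of constants at the endpoints. This gives at most $\frac{2k}{\sqrt p}\bigl(2C_0\log\frac NM\bigr)^{2k}$, which is exactly the second term.

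For the diagonal part we use only a weak consequence of the diagonal condition: every distinct value among $j_1,\dots,j_{2k}$ appears at least twice. (A value appearing once has occurrence difference $\pm1\not\equiv0\pmod d$.) We then bound the count by $\sum_{l=1}^{k}$ over the number $l$ of distinct values. The steps are as follows.
\begin{itemize}
\item Choose the set partition of $\{1,\dots,2k\}$ into $l$ blocks, each of size $\ge2$. There are at most $\binom{2k}{l}l^{2k-l}$ such partitions, or a similar crude bound.
\item For a block of size $m\ge2$, bound the sum over its common value by $\sum_{|j|>M}|c_j|^m\le 2C_0^m\sum_{j>M}(j-1)^{-m}\le 4C_0^m M^{1-m}$.
\item Multiplying over the blocks gives $\prod 4C_0^{m_i}M^{1-m_i}=4^lC_0^{2k}M^{l-2k}$.
\item Combined with the partition count, using $\binom{2k}{l}\le 4^k$, this yields $(16C_0^2)^k\sum_l (l/M)^{2k-l}$ after absorbing $4^l\le4^k$.
\end{itemize}
The main obstacle is this combinatorial bookkeeping: the partition count must be $\le 4^k l^{2k-l}$ or comparable. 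I would justify it by assigning to each of the $2k$ positions one of $l$ labels, where the $l$ first occurrences are fixed by a subset choice ($\le 2^{2k}$) and the remaining $2k-l$ positions each choose among $l$ labels.

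For the simplified form, assume $k\le M/e$. The terms $a_l=(l/M)^{2k-l}$ are then dominated by the $l=k$ term up to a geometric-type factor. Indeed, $\log a_l=(2k-l)\log(l/M)$ is increasing in $l$ on $[1,k]$ when $l/M\le 1/e$, since its derivative is $-\log(l/M)+(2k-l)/l>0$. So $\sum_l a_l\le k(k/M)^k$. To reach the factor $2^k$ instead of $k$, I would compare ratios $a_{l}/a_{l+1}$ more carefully, or simply use $k\le 2^k$. This gives $(16C_0^2)^k 2^k (k/M)^k=(32C_0^2k/M)^k$.
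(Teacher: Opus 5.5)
Your proposal is correct and follows essentially the same route as the paper: expand the $2k$-th power, split the tuples according to whether $f_{\mathbf{j},k}$ is a perfect $d$-th power, apply Weil's bound to the non-diagonal part, and for the diagonal part use that every root appears at least twice (so $l\le k$), count tuples by first occurrences ($\le 2^{2k}l^{2k-l}$), bound the tail sums by powers of $M$, and finally use the monotonicity of $l\mapsto (l/M)^{2k-l}$ when $k\le M/e$. Organizing the diagonal count by set partitions with the per-block bound $4C_0^mM^{1-m}$ is slightly cleaner than the paper's extra sum over exponent patterns. The endpoint slack you flag in $\sum_{M<|j|<N}|c_j|\le 2C_0\log\frac{N}{M}$ is also present, without comment, in the paper's proof.
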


\begin{proof}

For convenience, we write 
\[
S:= \frac{1}{p}\sum_{K\in\F_p} \left| \sum_{M <|j| < N}  \overline{\chi}(K-j)c_j(x) \right|^{2k} . \] 

\noindent
We have
\[
S=  \frac{1}{p}\sum_{K=1}^p \left( \sum_{M <|j| < N}  \overline{\chi}(K-j)c_j(x) \right)^k  \left(\sum_{M < |j| < N}  
    \chi(K-j)\overline{c_j(x)}\right)^k,
\]

\noindent
so by expanding the power, switching the sums and using $\overline{\chi} = \chi^{d-1}$, we get

\begin{align*}
  S&= \dfrac{1}{p} \sum_{M < |j_1|, \ldots, |j_{2k}| <N}  \prod_{h=1}^k c_{j_h}(x)\overline{c_{j_{h+k}}(x)}  
    \sum_{K=1}^p\chi\left(\prod_{h=1}^k (K-j_h)^{d-1}\prod_{h=k+1}^{2k}(K-j_h)  \right)  \\
    &=S_1 + S_2,
\end{align*}
\noindent
where we have split the sum into two parts: one over the $2k$-tuple $(j_1,\ldots,j_{2k})$ for which 
$f_\textbf{j}(x) := f_{\textbf{j},k}(x) = g^d \bmod p$ for some $g\in\F_p[X]$ (see ~\eqref{eq:def_polynomeTuple} for the definition of $f_{\textbf{j},k}$), 
and the other over the elements for which $f_\textbf{j}$ is not a perfect $d$-th power mod $p$. We will note them 
respectively $S_1$ and $S_2$. We will furthermore define $\mathcal{J}$ as the set of all $2j$-tuples $(j_1,\ldots,j_{2k})$ in 
the summation:
\[\mathcal{J}:= \{(j_1,\ldots,j_{2k}) \mid \forall \; 1 \leq i \leq 2k,\; M < |j_i| < N\}.
\]
Let $\mathcal{J}_1$ be the subset of $\mathcal{J}$ such that the polynomial $f_\textbf{j}$ is a perfect $d$-th power and $\mathcal{J}_2 = \mathcal{J} \setminus \mathcal{J}_1$.

\medskip

The sum $S_2$ can easily be bounded by using Weil's bound for character sums (Lemma~\ref{WeilBound}) as follows:

\begin{align*}
|S_2| &\leq \dfrac{1}{p} \sum_{\textbf{j}\in \mathcal{J}_2} \left|\prod_{h=1}^k c_{j_h}(x)\overline{c_{j_{h+k}}(x)}  \sum_{K=1}^p\chi\left(\prod_{h=1}^k (K-j_h)^{d-1}\prod_{h=k+1}^{2k}(K-j_h)  \right) \right| \\
&\leq \dfrac{1}{p} \sum_{\textbf{j}\in \mathcal{J}} \left( \prod_{h=1}^{2k} \frac{C_0}{|j_h|-1}\right)  2k \sqrt{p}\\
&\leq \dfrac{2k}{\sqrt{p}} \left(2C_0\log\frac{N}{M} \right)^{2k}.    \\
\end{align*}

In the first sum ($S_1$), we cannot use Weil's bound, so we trivially bound the character sum by $p$, here our goal is to control the cardinality of $\mathcal{J}_1$ to bound the value of the sum $S_1$. We can see that in order 
for $f_\textbf{j}$ to be 
a perfect $d$-th power, it needs to have $l\leq k$ roots (a monomial $(x-j)^{d-1}$ or 
$(x-j')$ cannot alone stand for a $d$-th power). We thus have a trivial injection of 
$\mathcal{J}_1$ into
\begin{align*}
{\mathcal{J}_1}' := \left\{(j_1,\ldots,j_{2k})\in \mathcal{J} \mid \# \{ j_1,\ldots,j_{2k} \} \leq k  \right\}.
\end{align*}

For $\textbf{e} :=(e_1,\ldots, e_l)$ an $l$-tuple of $l$ distinct values, we have at most $2^{2k} l^{2k-l}$ $2k$-tuples $\textbf{j} :=(j_1,\ldots, j_{2k})$ such that the distinct values that appears 
in $\textbf{j}$ are the $\textbf{e}$ in that exact order. Indeed, if we take one such $2k$-tuple, we know that $j_1 = e_1$, then we track the first appearance of $e_i$ for $2 \leq i \leq l$, 
we now have a sequence $1 = i_1 < i_2 < \cdots < i_l \leq 2k$, and by the ``stars and bars'' theorem, there are $\binom{2k-1}{l-1} \leq 2^{2k}$ ways to chose these $l-1$ integers 
$i_2,\ldots, i_l$. The $2k-l$ remaining values (which are not the first occurence of an $e_i$) can take at most $l$ different values, thus we bound the number of combinations possible 
for them by $l^{2k-l}$ which gives us our result. Since there are repetitions, the modulus of the product of $c_j(x)$ will have the following form:
\[ \prod_{h=1}^{2k} |c_{j_h}(x)| = \prod_{h=1}^{l} |c_{e_h}(x)|^{2+\varepsilon_{h}},
\]
with $2+\varepsilon_{h}$ being the number of occurence of $e_h$ in $\textbf{j}$. Moreover, $\sum_{h=1}^l (2 +\varepsilon_{h}) = 2k$,
and when we sum this product over the set of possible values of $(j_h)_{1\leq h \leq l}$, we get

\[
\sum_{M < |j_1|,\ldots,|j_l|<N } \prod_{h=1}^l |c_{j_h}(x)|^{2+\varepsilon_h} \leq C_0^{2k}\prod_{h=1}^l \frac{2}{M^{1+\varepsilon_{h}}} =  \frac{C_0^{2k}2^l}{M^{2k-l}}.
\]

We will now split the sum along the possible sets of such exponents which will give us the desired bound on $S_1$:

\begin{align*}
S_1 &= \dfrac{1}{p} \sum_{\textbf{j}\in \mathcal{J}_1}  \prod_{h=1}^k c_{j_h}(x)\overline{c_{j_{h+k}}(x)}  \sum_{K=1}^p\chi\left(\prod_{h=1}^k (K-j_h)^{d-1}\prod_{h=k+1}^{2k}(K-j_h)  \right) \\
&\leq  \sum_{\textbf{j}\in \mathcal{J}_1'}  \prod_{h=1}^{2k} |c_{j_h}(x)|  \\
&\leq \sum_{l=1}^k \sum_{\substack{(\varepsilon_1,\ldots,\varepsilon_l) \\ \sum_{1\leq h \leq l}\varepsilon_h =2(k-l)  }} 2^{2k} l^{2k-l}\sum_{M < |e_1|,\ldots,|e_l|<N } 
\prod_{h=1}^l |c_{e_h}(x)|^{2+\varepsilon_h} \\
&\leq \sum_{l=1}^k \binom{2k-l-1}{l-1} 2^{2k+l}C_0^{2k} \left(\dfrac{l}{M} \right)^{2k-l} \\
&\leq (16C_0^2)^{k}  \sum_{l=1}^k \left(\dfrac{l}{M} \right)^{2k-l} .
\end{align*}
\noindent
Moreover, if $M\geq \frac{k}{e}$, we can see that $x\mapsto \left(\dfrac{x}{M} \right)^{2k-x} $ is increasing on $[0,k]$, thus \newline $\sum_{l=1}^k \left(\dfrac{l}{M} \right)^{2k-l}  \leq k\left(\dfrac{k}{M} \right)^{k} \leq \left(\dfrac{2k}{M} \right)^{k}  $ which completes the proof.

\end{proof}

\medskip

\begin{coro}
    \label{coro:MajorationSommeZMN-CasParticulier}

 Let $p$ be a large prime and $1\leq M < N < \frac{p}{2}$ be positive integers. Then, for all positive integers $k\leq M$, we have
    
\begin{equation}
    \frac{1}{p} \sum_{K\in\F_p} \left| \sum_{M < |j| < N}  \chi(K-j)\dfrac{1}{p(e_p(j+x) - 1)} \right|^{2k}   \leq  \left(\dfrac{8k}{M} \right)^{k}  + \dfrac{2k}{\sqrt{p}} \left(\frac{1}{2}\log\frac{N}{M} \right)^k.
\end{equation}
\end{coro}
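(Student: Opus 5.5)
This corollary is a direct specialization of Proposition~\ref{prop:MajorationSommeMN}. I would apply it with $c_j(x) := \frac{1}{p(e_p(j+x)-1)}$ and the character $\overline{\chi}$ in place of $\chi$. The proposition is stated for $\overline{\chi}(K-j)$, but $\overline{\chi}$ is again a character mod $p$ of the same order $d$, so this is harmless. The only real input is an admissible constant $C_0$ for the decay hypothesis $|c_j(x)|\le C_0/(|j|-1)$, uniformly for $x\in[0,1]$ and $2\le|j|<p/2$.

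To find $C_0$, write $|e_p(j+x)-1| = 2|\sin(\pi(j+x)/p)|$. Since $|j+x|\le |j|+1\le p/2$ in the range (for $|j|<p/2$ and $p$ odd, $|j|\le (p-1)/2$), Jordan's inequality $|\sin(\pi t)|\ge 2|t|$ for $|t|\le 1/2$ gives
\[
|e_p(j+x)-1| \ge \frac{4|j+x|}{p} \ge \frac{4(|j|-1)}{p}.
\]
Hence $|c_j(x)|\le \frac{1}{4(|j|-1)}$, so one may take $C_0=\tfrac14$.

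Substituting $C_0=\frac14$ into the proposition, the second term becomes
\[
\frac{2k}{\sqrt p}\Bigl(\tfrac12\log\tfrac NM\Bigr)^{2k}.
\]
This does not match the stated $(\frac12\log\frac NM)^{k}$. When $\frac12\log\frac NM\le 1$ the stated term is the larger one, so it is implied. Otherwise I would read the exponent as a typo for $2k$, since that is what the method gives. Alternatively, I would double-check whether the stated form can be recovered by bounding $\sum |c_j|$ with a more careful harmonic sum, but I do not expect that to work in general.

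For the first term I would use the simplified form of the proposition, which gives $(32C_0^2k/M)^k=(2k/M)^k\le(8k/M)^k$. That form requires $k\le M/e$, while the corollary only assumes $k\le M$. For $M/e<k\le M$ I would go back to the unsimplified bound
\[
(16C_0^2)^k\sum_{l=1}^k (l/M)^{2k-l} = \sum_{l=1}^k (l/M)^{2k-l}.
\]
Each term satisfies $(l/M)^{2k-l}\le 1$ because $l\le k\le M$. The sum is therefore at most $k$, and $k\le (8k/M)^k$ since $8k/M\ge 8/e>2$ and $k\ge1$. This covers the whole range.

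The main obstacle I expect is purely bookkeeping: verifying the lower bound $|e_p(j+x)-1|\ge 4(|j|-1)/p$ uniformly at the edge of the range $|j|$ near $p/2$, and reconciling the exponent in the Weil term. Everything else is substitution into Proposition~\ref{prop:MajorationSommeMN}.
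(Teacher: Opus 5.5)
Your route is the paper's: the corollary is Proposition~\ref{prop:MajorationSommeMN} applied to $c_j(x)=1/(p(e_p(j+x)-1))$, combined with a pointwise decay bound. The paper uses the cruder bound $|c_j(x)|\le 1/(2(|j|-1))$, i.e.\ $C_0=\tfrac12$. That gives the first term $(8k/M)^k$ exactly, but the Weil term becomes $\frac{2k}{\sqrt p}(\log\frac NM)^{2k}$. Your $C_0=\tfrac14$ is valid and sharper.

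Your diagnosis of the second term is correct. Neither choice of $C_0$ yields $(\frac12\log\frac NM)^{k}$ in general, and the paper's one-line proof does not address this. The intended statement is evidently $\frac{2k}{\sqrt p}(\frac12\log\frac NM)^{2k}$, which is exactly what you obtain. The paper itself uses this form, $\frac{2k}{\sqrt p}(\frac{\log p}{2\lambda})^{2k}$, in the proof of Theorem~\ref{theorem:OddCharacterLowerBound}. Be aware, however, that the bound $|\mathcal{E}_p|\ll p^{3/4}$ in the proof of Proposition~\ref{prop:linkgpkAndGX}, with $k=\lfloor\log p/(4\log_2 p)\rfloor$, relies on the exponent $k$. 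With the correct exponent $2k$ that choice of $k$ only gives $p^{1-o(1)}$, so that step would need re-tuning.

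Two minor remarks.

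First, your step $|j|+1\le p/2$ fails at $j=(p-1)/2$, but the conclusion survives. For $j\ge 2$ one has $(j+x)/p\in[j/p,\,1-j/p]$, so $\sin(\pi(j+x)/p)\ge\sin(\pi j/p)\ge 2j/p$. For $j\le -2$ one has $|j|-1\le|j+x|\le|j|<p/2$, and Jordan's inequality applies directly.

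Second, your separate treatment of $M/e<k\le M$ is correct but unnecessary. The proof of Proposition~\ref{prop:MajorationSommeMN} shows that $l\mapsto(l/M)^{2k-l}$ is increasing on $[0,k]$ as soon as $k\le eM$. The condition ``$k\le M/e$'' in its statement is stronger than what the proof uses. So the simplified bound already covers $k\le M$.
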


\begin{proof}

    For $-p/2 < j < p/2,$ $|j|\geq 2$,
    \[  \frac{1}{|p(e_p(j+x)  - 1)|} \leq \frac{1}{2(|j|-1)}.
    \]

\vspace{0.5em}

    We apply Proposition~\ref{prop:MajorationSommeMN} with $c_j(x) = \frac{1}{p(e_p(j+x))}$, which is bounded by $\frac{1}{2(|j|-1)}$.
\end{proof}

\medskip

With these tools, we can now prove the following proposition, which will allow us to bound the moments of the second sum in~\eqref{eq:gpkSplitSum}.
\begin{prop}
\label{prop:Section4MajMoments}
    Let $\{c_j\}_{j\in \Z}$ be a sequence of complex-valued functions of a variable $x\in [0,1]$ such that $|c_j(x)| \leq \frac{C_0}{|j|-1}$ and $ \underset{x\in[0,1]}{\sup} |c_j'(x)| \leq \frac{C_0}{(|j|-1)^2}$ for $|j| \geq 2$, where $C_0$ is a positive constant. Let $p$ be a large prime, $k\in\N, k\leq \frac{\log p}{100\log_2 p}$ and put $J = 10^5 C_0^2 k$. Then we have

\[
    \mathcal{M}_{2k} :=\frac{1}{p}\sum_{K\in\F_p} \max_{x \in [0,1]} \left| \sum_{J< |j| <\frac{p}{2}} \chi(K-j)c_j(x)\right|^{2k} \ll e^{-7k}.
\]
\end{prop}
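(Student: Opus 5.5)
We handle the maximum over $x\in[0,1]$ with a dyadic decomposition in $j$ combined with a discretization in $x$, so that Proposition~\ref{prop:MajorationSommeMN} can be applied to each piece at a fixed point. Set $M_r:=2^rJ$ for $r\ge 0$ and stop at $M_R\ge p/2$; then
\[
\sum_{J<|j|<p/2}\chi(K-j)c_j(x)=\sum_{r=0}^{R-1}T_r(K,x),\qquad T_r(K,x):=\sum_{M_r<|j|\le M_{r+1}}\chi(K-j)c_j(x).
\]
(The statement involves $\chi$ rather than $\overline{\chi}$; this changes nothing, since $\overline\chi$ is also a character of order $d$.) We take weights $w_r:=\beta 2^{-r/4}$, with $\beta$ chosen so that $\sum_r w_r=1$. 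Hölder's inequality then gives
\[
\Bigl|\sum_r T_r\Bigr|^{2k}\le \sum_r w_r^{1-2k}|T_r|^{2k}.
\]
It therefore suffices to bound $\frac1p\sum_K\max_x|T_r(K,x)|^{2k}$ by something like $e^{-8k}2^{-rk/4}$. The factors $w_r^{1-2k}\approx \beta^{1-2k}2^{rk/2}$ are then beaten by a gain of order $2^{-rk}$ coming from the moment bound, and the resulting series in $r$ converges.

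Fix $r$. We discretize $x$ with a grid $x_\ell=\ell/L_r$, $0\le\ell\le L_r$. Using $|c_j'|\le C_0/(|j|-1)^2$, we get for $|x-x_\ell|\le 1/L_r$
\[
|T_r(K,x)-T_r(K,x_\ell)|\le \frac{1}{L_r}\sum_{|j|>M_r}\frac{C_0}{(|j|-1)^2}\ll \frac{C_0}{L_rM_r},
\]
uniformly in $K$. We choose $L_r$ so that this error is at most $\eta_r:=\sqrt{k/M_r}$. Since $M_r\ge J\gg C_0^2k$, it is enough to take $L_r\asymp C_0/\sqrt{kM_r}+1$. In fact $L_r$ can be bounded, because $C_0/\sqrt{kM_r}\le C_0/\sqrt{kJ}\ll 1$. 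So a single point, or $O(1)$ points, suffice. This is what makes the maximum cheap: the derivative hypothesis controls the oscillation in $x$ directly. Now bound $\max_x|T_r|^{2k}\le 2^{2k}\bigl(\sum_\ell|T_r(K,x_\ell)|^{2k}+\eta_r^{2k}\bigr)$ and apply the simplified form of Proposition~\ref{prop:MajorationSommeMN} with $M=M_r$, $N=M_{r+1}$; it applies since $k\le M_r/e$. Each grid point contributes
\[
\Bigl(\frac{32C_0^2k}{M_r}\Bigr)^k+\frac{2k}{\sqrt p}(2C_0\log 2)^{2k}.
\]

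Summing over $r$, the main term is
\[
\sum_r \beta^{1-2k}2^{rk/2}\,4^k\Bigl(\frac{32C_0^2k}{2^rJ}\Bigr)^k\ll \Bigl(\frac{4\cdot32}{10^5\beta^2}\Bigr)^k\sum_r2^{-rk/2}\ll e^{-7k},
\]
because $10^5$ is large enough to absorb $\beta^{-2}$ and the constants. The $\eta_r$ term gives the same kind of bound. The Weil error term is the delicate step. It is multiplied by $\sum_r w_r^{1-2k}$, and since $R\asymp\log p$ this is of size roughly $2^{Rk/2}\beta^{-2k}\approx p^{k/2}$, which would destroy the $p^{-1/2}$ saving. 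To avoid this, we will not use geometric weights for all $r$. Instead we split off the blocks with $M_r>(\log p)^{A}$ for a fixed $A$, and treat the whole tail $(\log p)^A<|j|<p/2$ as a single block in Proposition~\ref{prop:MajorationSommeMN}, again with a discretization in $x$ and now polynomially many grid points. The Weil term there is about $\frac{2k}{\sqrt p}(2C_0\log p)^{2k}$ times the grid size. Since $k\le \log p/(100\log_2 p)$, we have $(\log p)^{2k}\le p^{1/50}$, so this is $p^{-1/2+o(1)}$, which is much smaller than $e^{-7k}$. The main term for this block is $(C_0^2k/(\log p)^A)^k$, which is tiny. The remaining blocks, about $A\log_2p$ of them, use the geometric weights. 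Their Weil terms total at most $(\log p)^{Ak/2}p^{-1/2}(\log p)^{O(k)}$, which is again negligible by the bound on $k$. Balancing these constraints, together with checking that the weight normalization $\beta$ is compatible with the constant $10^5$, is where I expect the main work to lie.
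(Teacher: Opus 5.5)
\paragraph{Where the argument fails.} The problem is the step you passed over quickly: $10^5$ does \emph{not} absorb $\beta^{-2}$. With $w_r=\beta 2^{-r/4}$ and $\sum_{r\ge 0}w_r=1$ you have $\beta=1-2^{-1/4}\approx 0.159$. Your $r=0$ main term is therefore $\beta\,(128\cdot 10^{-5}/\beta^{2})^{k}\approx (0.05)^k\approx e^{-3k}$, which is far from $e^{-7k}$.

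Tuning the weights cannot fix this. Suppose you have block bounds $\bigl(\frac1p\sum_K\max_x|T_r|^{2k}\bigr)^{1/(2k)}\le b_r$. The weights minimizing $\sum_r w_r^{1-2k}b_r^{2k}$ subject to $\sum_r w_r=1$ are $w_r\propto b_r$, and the minimum is $(\sum_r b_r)^{2k}$; this is just Minkowski. Proposition~\ref{prop:MajorationSommeMN} gives $b_r^2=32C_0^2k/(2^rJ)=32\cdot 10^{-5}\,2^{-r}$. The best you can then get is $\bigl(\sqrt{32\cdot 10^{-5}}/(1-2^{-1/2})\bigr)^{2k}\approx e^{-5.6k}$, even with no discretization loss. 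So cutting $J<|j|<p/2$ into dyadic blocks and recombining them costs a fixed amount in the exponent.

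The rest of your argument, including the Weil bookkeeping with the cut at $(\log p)^A$, is fine and yields $\mathcal M_{2k}\ll e^{-ck}$ for some smaller $c$. It does not yield the stated $e^{-7k}$ with $J=10^5C_0^2k$; for that you would need roughly $4\cdot 10^5$ in place of $10^5$. These explicit constants matter, because they feed into $C_d^+$ in Theorem~\ref{thm:UpperBoundBothTheorems}.

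\paragraph{How to repair it.} Drop the decomposition in $j$ altogether, which is what the paper does.
\begin{enumerate}
\item Discretize $x$ on the grid $S_k=\{l/k^4\}$. The derivative hypothesis bounds the discretization error by $C_0/((J-1)k^4)$.
\item Apply Proposition~\ref{prop:MajorationSommeMN} \emph{once} at each grid point, with $M=J$ and $N=p/2$. The grid costs only a factor $k^4+1$.
\item There is now a single Weil term, $\frac{2k}{\sqrt p}(2C_0\log p)^{2k}\le p^{-1/2+1/50+o(1)}$. It is harmless because $k\le \log p/(100\log_2 p)$, so the accumulation problem that forced your $(\log p)^A$ cut never arises.
\end{enumerate}
Your observation that the maximum over $x$ is cheap applies verbatim to the undivided range, since $\sum_{|j|>J}C_0/(|j|-1)^2\ll C_0/J$. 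So $O(1)$ grid points would in fact suffice.

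One caution if you follow this route. The paper combines the two pieces with $|a+b|^{2k}\le 2^{2k}(|a|^{2k}+|b|^{2k})$, which gives a main term $(128\cdot 10^{-5})^k\approx e^{-6.66k}$, itself slightly short of $e^{-7k}$. Since the discretization error $b$ is tiny, use instead $|a+b|^{2k}\le(1+\epsilon)^{2k-1}|a|^{2k}+(1+\epsilon^{-1})^{2k-1}|b|^{2k}$. The main term becomes $(1+\epsilon)^{2k}(32\cdot 10^{-5})^k\approx e^{-8k}$, which is comfortably $\ll e^{-7k}$.
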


\begin{proof}

Let $S_k := \{\frac{l}{k^4} : 0\leq l \leq k^4 \}$. Then, for all $x\in [0,1]$, there exists $x_l\in S_k$ such that $|x - x_l|\leq \frac{1}{k^4}$. In this case, we have $|c_j(x) - c_j(x_l)| \leq \frac{C_0}{(|j|-1)^2 k^4}$, and hence we have
     \[
      \left| \sum_{J < |j| < \frac{p}{2}}  \chi(K-j)c_j(x) \right| \leq \left| \sum_{J < |j| < \frac{p}{2}}  \chi(K-j)c_j(x_l) \right| + \frac{C_0}{(J-1) k^4}.
     \]

     By using this result, Proposition~\ref{prop:MajorationSommeMN}  and the following inequality (valid for $x,y\in\R$): 
     \[|x + y|^{2k} \leq (2\max(x,y))^{2k} \leq 2^{2k}(x^{2k} + y^{2k}),\] we get
     \begin{align}
         \label{eq:MajMomentsSecondSumStep2}
         \begin{split}
            \mathcal{M}_{2k} 
            &\leq\, \frac{2^{2k}}{p} \sum_{K\in\F_p} \left( \max_{x \in S_k} \left| \sum_{J < |j| < \frac{p}{2}}  \chi(K-j)c_j(x) \right|^{2k} + \left(\frac{C_0}{(J-1) k^4}\right)^{2k} \right) \\
            &\leq\, \frac{2^{2k}}{p} \sum_{K\in\F_p} \left( \sum_{x \in S_k} \left| \sum_{ J < |j| < \frac{p}{2}}  \chi(K-j)c_j(x) \right|^{2k} + \left(\frac{C_0}{(J-1) k^4}\right)^{2k} \right)  \\
            &\leq\, 2^{2k} k^4  \left( \left(\dfrac{32C_0^2k}{J} \right)^{k}  + \dfrac{2k}{\sqrt{p}} \left(2C_0\log p \right)^{2k}+ \left(\frac{C_0}{(J-1) k^4}\right)^{2k} \right)  \\
            &\ll\, e^{-7k}.
         \end{split}
    \end{align}
\end{proof}

\medskip

With this proposition, we can now prove the upper bound of Theorems~\ref{theorem:distributionEvenOrderCharacterSum} and~\ref{theorem:distributionOddOrderCharacterSum}. 
In the following, we recall that $\delta_d = 2$ if $d$ is even, and $\delta_d = 2\cos\frac{\pi}{2d}$ if $d$ is odd.

\begin{proof}[Proof of the upper bound of Theorems~\ref{theorem:distributionEvenOrderCharacterSum} and~\ref{theorem:distributionOddOrderCharacterSum}]

Let
\[M_\chi(K):= \max_{x\in[0,1]} \left|(e(x) - 1)g_{\chi,K}(x) \right| = \max_{x\in[0,1]} \left|\frac{f_\chi(\zeta_p^{K+x})}{\sqrt{p}}\right|.\]
Let also $c_j(x):= \dfrac{2\sin(\pi x)}{p(e_p(j+x)-1)}$. It is easy to see that $|c_j(x)| \leq \dfrac{1}{|j|-1}$ and $\sup_{x\in [0,1]} |c_j'(x)| \leq \dfrac{1}{(|j|-1)^2}$ for $-p/2 < j < p/2$.
 Let $k\leq \dfrac{\log p}{100\log_2 p}$ be a positive integer that we will choose later and let $J = 10^5 k$. It follows from Lemmas~\ref{lemma:HarmonicEstimation} and~\ref{lemma:maxOfSumUnitCircle} that

\begin{align*}
M_\chi(K) 
&\leq  \max_{x\in [0,1]} \left|\sum_{0\leq |j|< J} \frac{\chi(K-j)}{\pi(j+x)}\right| + \max_{x\in [0,1]} \left|\sum_{J\leq |j|< p/2} \chi(K-j)c_j(x)\right| \\ 
&\leq \frac{\delta_d}{\pi}\log k + \max_{x\in [0,1]} \left|\sum_{J\leq |j|< p/2} \chi(K-j)c_j(x)\right| + C_1,
\end{align*}
\noindent
with $C_1 = \frac{\delta_d}{\pi}\left(\gamma + 2\log 2 + 5\log 10 \right) $. 
\medskip

Let $C_d^+ = 7\exp(-\frac{\pi}{\delta_d}(C_1+1))$. We now choose $k = \frac{C_d^+}{7}\exp(\frac{\pi}{\delta_d}V)$. 
Then, by Proposition~\ref{prop:Section4MajMoments}, since $|c_j(x)| \leq \frac{C_0}{|j|-1}, |c_j'(x)| \leq \frac{C_0}{(|j|-1)^2}$ with $C_0 = 1$, we have

\begin{align*}
         \begin{split}
            \Phi_\chi(V) 
            &\leq \frac{1}{p} \left| \left\{K\in\F_p :\; \max_{x\in [0,1]}\left|\sum_{J\leq |j|< p/2} \chi(K-j)c_j(x)\right| \geq 1 \right \} \right| \\
            &\leq \frac{1}{p} \sum_{K\in\F_p} \max_{x \in [0,1]} \left| \sum_{J \leq |j| < \frac{p}{2}}  \chi(K-j)c_j(x) \right|^{2k}   \\
            &\ll e^{-7k}  \\
            &\ll \exp\left(-C_d^+\exp\left( \frac{\pi}{\delta_d}V \right) \right)  .
         \end{split}
    \end{align*}
\end{proof}

\medskip

\section{The random model and preliminary results for the saddle point method}
\label{section4:Part1LowerBound}

In order to provide a lower bound for $\Phi_\chi$, we will compare the Laplace transform of $g_{\chi,K}$ to its probabilistic counterpart, which we define below: 
\[ 
G_{\mathbb{X},\chi}(x) := \frac{i}{p}\sum_{|j| < p/2} \dfrac{\mathbb{X}(j)}{e_p(j+x)-1},
\]
where $(\X(j))$ are i.i.d uniform random variables on $\mathbb{U}_d$, $d$ being the order of $\chi$ and $p$ its modulus.

\medskip

Throughout this section, we adopt a lighter notation for the sake of readability, writing $G_{\X,\chi} := G_{\X,\chi}(1/2)$ and similarly $g_{\chi,K}:= g_{\chi,K}(1/2) $.

\begin{prop}
    \label{prop:linkgpkAndGX}
    There exists a set $\mathcal{E}_p \subset \F_p$, of cardinality $|\mathcal{E}_p| \leq p^{3/4}$, such that for all $s\in \C, |s| \leq \dfrac{\log p}{100(\log_2 p)^2}$, we have

    \[
    \frac{1}{p}\sum_{K\in \F_p \setminus \mathcal{E}_p} \exp{\!\left(2s  \Re (g_{\chi,K}) \right)} = \E\left(\exp{\!\left(2s \Re (G_{\X,\chi})  \right)}  \right)+ O\!\left( \exp{\!\left( - \frac{\log p }{60\log_2 p} \right)}   \right),
    \]
    and the implicit constant is absolute.
\end{prop}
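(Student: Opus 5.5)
Our plan is the standard three-step scheme: truncate, compare moments of the short sums, and sum the exponential series. Fix a truncation parameter $J$, a small power of $p$ (say $J=p^{1/20}$), and split
\[
g_{\chi,K}=g_{\chi,K}^{\le J}+R_K,\qquad G_{\X,\chi}=G^{\le J}+R_\X,
\]
according to $|j|\le J$ or $J<|j|<p/2$.

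\textbf{Step 1 (exceptional set).} By Proposition~\ref{prop:MajorationSommeMN} with $M=J$, $N=p/2$ and $C_0=1/2$ (as in Corollary~\ref{coro:MajorationSommeZMN-CasParticulier}), and a moment $2k$ with $k\asymp \log p/\log_2 p$, the $2k$-th moment of $R_K$ over $K$ is at most $(8k/J)^k+p^{-1/2+o(1)}$, which is $\le p^{-1/2+o(1)}$. Markov's inequality then shows that $|R_K|>p^{-1/10}$ for at most $p^{3/4}$ values of $K$; we call this set $\mathcal{E}_p$. An analogous, easier moment bound for independent variables shows $\mathbb{P}(|R_\X|>p^{-1/10})$ is tiny. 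For $K\notin\mathcal{E}_p$ and $|s|\le \log p/(100(\log_2p)^2)$, we have $|g_{\chi,K}^{\le J}|\ll\log J$, so $\exp(2s\Re g)=\exp(2s\Re g^{\le J})(1+O(|s|p^{-1/10}))$. On the random side, the contribution of the bad event can be handled by Cauchy--Schwarz with $\E\exp(4|s||G|)$, which is $\le \exp(O(|s|\log p))$ since $|G|\ll\log p$ deterministically. Removing $\mathcal{E}_p$ from the average costs $p^{-1/4}e^{O(|s|\log J)}$. These reductions leave us to compare $\frac1p\sum_K\exp(2s\Re g^{\le J}_{\chi,K})$ with $\E\exp(2s\Re G^{\le J})$, up to admissible errors.

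\textbf{Step 2 (moments of the truncated sums).} Write $2\Re g^{\le J}=\sum_{|j|\le J}(a_j\overline\chi(K-j)+\overline{a_j}\chi(K-j))$, where $a_j=i/(p(e_p(j+1/2)-1))$. We expand its $n$-th power into a sum over tuples $(j_1,\dots,j_n)$ and choices of $\chi$ or $\overline\chi$, giving character sums $\sum_K\chi(\prod(K-j_h)^{\epsilon_h})$. When the polynomial is a $d$-th power, the average equals $1+O(n/p)$, which matches exactly the random expectation $\E\prod \X(j_h)^{\epsilon_h}=1$. Otherwise the random expectation is $0$ and Weil (Lemma~\ref{WeilBound}) bounds the average by $n/\sqrt p$. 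Since $\sum_{|j|\le J}|a_j|\ll\log J$, we obtain
\[
\frac1p\sum_K(2\Re g^{\le J})^n=\E(2\Re G^{\le J})^n+O\!\left(\frac{n(C\log J)^n}{\sqrt p}\right).
\]

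\textbf{Step 3 (summing the series).} Expand both exponentials as $\sum_n s^n(\cdot)^n/n!$, with moments now taken over all $K$; removing $\mathcal{E}_p$ is done separately as in Step 1. Summing the error of Step 2 over $n$ gives $O(|s|e^{C|s|\log J}/\sqrt p)$, which is $p^{-1/2+o(1)}$ in our range of $s$. Collecting all errors, the worst is the truncation/tail term, and the requirement is that everything be $\le\exp(-\log p/(60\log_2p))$. This is the \emph{main obstacle}: the tail $R_K$ must be small on the complement of $\mathcal{E}_p$ with only a polynomial-size exceptional set, while $s$ is as large as $\log p/(\log_2p)^2$. A factor like $e^{2|s|\cdot\max|R_K|}$ can only be harmless if $|s|\max|R_K|=o(1)$. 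We will therefore balance $J$, $k$ and the Markov threshold, possibly taking $J=(\log p)^{A}$ instead and accepting the error $\exp(-ck)$ with $k\asymp\log p/\log_2p$, as in Proposition~\ref{prop:Section4MajMoments}. We expect this to be what produces the stated $\exp(-\log p/(60\log_2 p))$ error.
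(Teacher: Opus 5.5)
The gap is not where you locate it. The tail $R_K$ is not the real obstacle; the size of the exponent is. Every error term you write carries a factor $e^{O(|s|\log J)}$:
\begin{itemize}
\item removing $\mathcal{E}_p$ costs $p^{-1/4}e^{O(|s|\log J)}$;
\item Step~3 produces $|s|e^{C|s|\log J}/\sqrt p$;
\item the random bad event is paid for with $\E\exp(4|s||G_{\X,\chi}|)\le e^{O(|s|\log p)}$.
\end{itemize}
For $|s|$ near $\log p/(100(\log_2 p)^2)$ and $J=p^{1/20}$ one has $|s|\log J\asymp(\log p)^2/(\log_2 p)^2$. So these factors are $p^{\omega(1)}$, and your claim that the summed Step~2 errors are $p^{-1/2+o(1)}$ is false.

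Shrinking $J$ does not help. Keeping $e^{O(|s|\log J)}$ below a small power of $p$ forces $\log J\ll(\log_2 p)^2$. On the other hand, the factorisation $\exp(2s\Re g)=\exp(2s\Re g^{\le J})(1+O(|s||R_K|))$ only yields an error $O(|s|\max_{K\notin\mathcal{E}_p}|R_K|)$ relative to $\frac1p\sum_K\exp(2\Re(s)\Re g^{\le J}_{\chi,K})$. For the largest real $s$ that average has size $\exp(c\log p/\log_2 p)$ (compare Proposition~\ref{prop:estimateLogEsperance}). Since $|R_K|$ is typically of size $J^{-1/2}$, reaching the additive error $\exp(-\log p/(60\log_2 p))$ would require $\log J\gg\log p/\log_2 p$. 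No choice of $J$ satisfies both constraints.

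Step~1 is also wrong as written. With $J=p^{1/20}$ the mean square of $R_K$ is $\asymp 1/J$, so $|R_K|$ is typically of size $p^{-1/40}\gg p^{-1/10}$. Markov at threshold $p^{-1/10}$ then loses a factor $p^{k/5}$, so it cannot give $|\mathcal{E}_p|\le p^{3/4}$.

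The missing idea is a \emph{probabilistic} bound on the whole sum off the exceptional set, namely $|g_{\chi,K}|<\log_2 p$ for $K\notin\mathcal{E}_p$. This lets you cut the Taylor series of the exponential at $N=\log p/(12\log_2 p)$ \emph{before} doing any completion or moment comparison. This is the paper's route:
\begin{itemize}
\item It takes $\mathcal{E}_p=\{K:\ |g_{\chi,K}|\ge\log_2 p\}$ and splits at $(\log p)^2$. The head is deterministically at most $\frac{7}{10}\log_2 p$. The tail exceeds $\frac{3}{10}\log_2 p$ only for $\ll p^{3/4}$ values of $K$, by Corollary~\ref{coro:MajorationSommeZMN-CasParticulier} with $k\asymp\log p/\log_2 p$ and Markov.
\item Off $\mathcal{E}_p$, the Taylor tail beyond $N$ is $\ll e^{-0.2N}$, because $2e^{1.2}|s|\log_2 p\le N$.
\item Only then is $\mathcal{E}_p$ added back, and only to the truncated polynomial. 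That polynomial is at most $(2|s|\log p)^N\le p^{1/6}$, against a density $p^{-1/4}$.
\item The first $N$ moments of the full $g_{\chi,K}$ (no cut in $j$ is needed) are compared with the random model via Proposition~\ref{prop:momentArithmetic=Probabilistic}, at cost $p^{-1/2}(2|s|\log p)^N$.
\item The random series is completed using $\E|\Re G_{\X,\chi}|^n\ll(\log n+5)^n$, rather than the deterministic bound $|G_{\X,\chi}|\ll\log p$.
\end{itemize}
The resulting error $e^{-0.2N}=\exp(-\log p/(60\log_2 p))$ is exactly the one in the statement.
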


In order to prove this proposition, we will need a few tools.

\begin{lemma}
\label{lemma:productofArithmeticToProbabilistic}
For $k_1,k_2 \in \N$, if $\chi \pmod p$ is a character of order $d$, the following holds

    \[
    \frac{1}{p}\sum_{K\in \F_p} g_{\chi,K}^{k_1} \overline{g_{\chi,K}}^{k_2}  = \E\Big(G_{\X,\chi}^{k_1} \overline{G_{\X,\chi}}^{k_2}  \Big)+ O\!\left( \frac{k_1+k_2}{\sqrt{p}}\left(\frac{1}{\pi}\log p\right)^{k_1+k_2}  \right).
    \]

\end{lemma}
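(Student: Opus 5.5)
We expand both sides as multilinear sums and compare term by term. By \eqref{formulaG} at $x=1/2$, write $c_j := \frac{i}{p(e_p(j+1/2)-1)}$ for $|j|<p/2$, so that
\[
g_{\chi,K}=\sum_{|j|<p/2}\overline{\chi}(K-j)\,c_j, \qquad G_{\X,\chi}=\sum_{|j|<p/2}\X(j)\,c_j .
\]
Expanding the powers gives
\[
\frac1p\sum_{K\in\F_p} g_{\chi,K}^{k_1}\overline{g_{\chi,K}}^{k_2}
=\sum_{\mathbf{j}} \prod_{h=1}^{k_1}c_{j_h}\prod_{h=k_1+1}^{k_1+k_2}\overline{c_{j_h}}\;\frac1p\sum_{K\in\F_p}\chi\bigl(f_{\mathbf{j},k_1}(K)\bigr),
\]
where $\mathbf{j}=(j_1,\dots,j_{k_1+k_2})$ runs over all tuples with $|j_h|<p/2$. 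Here we used $\overline{\chi}=\chi^{d-1}$ and the polynomial $f_{\mathbf{j},k_1}$ of \eqref{eq:def_polynomeTuple}, with the translate $K\mapsto K-j$ absorbed. Note that $j\mapsto K-j$ hits each residue once since $|j|<p/2$, so distinct $j$'s are distinct mod $p$. Similarly,
\[
\E\bigl(G_{\X,\chi}^{k_1}\overline{G_{\X,\chi}}^{k_2}\bigr)=\sum_{\mathbf{j}}\prod c_{j_h}\prod\overline{c_{j_h}}\;\E\Bigl(\prod_{h\le k_1}\X(j_h)\prod_{h>k_1}\overline{\X(j_h)}\Bigr).
\]
The proof then reduces to comparing the two inner factors for each fixed tuple $\mathbf{j}$.

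For a fixed $\mathbf{j}$, group equal entries. Let $a_e$ be the number of occurrences of the value $e$ among the first $k_1$ coordinates and $b_e$ the number among the last $k_2$. Then
\[
f_{\mathbf{j},k_1}(X)=\prod_e (X-e)^{(d-1)a_e+b_e},
\]
and by independence and uniformity on $\U_d$,
\[
\E\Bigl(\prod_{h\le k_1}\X(j_h)\prod_{h>k_1}\overline{\X(j_h)}\Bigr)=\prod_e \E\bigl(\X^{a_e-b_e}\bigr),
\]
which equals $1$ if $d\mid a_e-b_e$ for every $e$ and $0$ otherwise. Since $(d-1)a_e+b_e\equiv -(a_e-b_e)\pmod d$, the condition $d\mid a_e-b_e$ for all $e$ is exactly the condition that $f_{\mathbf{j},k_1}$ is a $d$-th power times a constant. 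The roots $e$ are distinct in $\F_p$, so this is the same as being a perfect $d$-th power mod $p$.

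We now evaluate the character sum in the two cases. If $f_{\mathbf{j},k_1}$ is a $d$-th power $g^d$, then $\chi(f_{\mathbf{j},k_1}(K))=1$ except at the at most $k_1+k_2$ roots, where it is $0$. The inner average is therefore $1+O\bigl((k_1+k_2)/p\bigr)$, matching the expectation $1$. Otherwise, Lemma~\ref{WeilBound} gives $\bigl|\frac1p\sum_K\chi(f_{\mathbf{j},k_1}(K))\bigr|\le (k_1+k_2)/\sqrt p$, while the expectation is $0$. In both cases the discrepancy is at most $(k_1+k_2)/\sqrt p$ per tuple.

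It remains to sum the weights $\prod|c_{j_h}|$ over all tuples, which gives $\bigl(\sum_{|j|<p/2}|c_j|\bigr)^{k_1+k_2}$. The main technical point is the constant $\frac1\pi\log p$. At $x=1/2$ we have $|e_p(j+1/2)-1|=2|\sin(\pi(j+1/2)/p)|$, and since $|\sin(\pi t)|\ge 2|t|$ for $|t|\le 1/2$, this gives $|c_j|\le \frac{1}{4|j+1/2|}$. Hence
\[
\sum_{|j|<p/2}|c_j|\le \frac{1}{4}\cdot 2\Bigl(\log\frac{p}{2}+O(1)\Bigr)\le \frac1\pi\log p
\]
for large $p$, as $\tfrac12<\tfrac1\pi\cdot$ is false numerically. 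So one should use the sharper estimate \eqref{eq:approxCoeffSerieFourier}, $|c_j|=\frac{1}{2\pi|j+1/2|}+O(1/p)$. Summing this over $|j|<p/2$ gives $\frac1\pi\log p+O(1)$. Any resulting constant factor $C^{k_1+k_2}$ can be absorbed, or the statement should be read as holding with $O(1)$ inside the logarithm. Combining this with the per-tuple bound yields the claimed error $O\bigl(\frac{k_1+k_2}{\sqrt p}(\frac1\pi\log p)^{k_1+k_2}\bigr)$.
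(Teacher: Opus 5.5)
Your proposal is correct and follows the paper's proof essentially step by step. You expand both sides over tuples $\mathbf{j}$, identify the expectation with the indicator that $f_{\mathbf{j},k_1}$ is a perfect $d$-th power, bound each tuple's discrepancy by $(k_1+k_2)/\sqrt{p}$ via Weil, and sum the weights $\bigl(\sum_{|j|<p/2}|c_j|\bigr)^{k_1+k_2}$ using $c_j=\frac{1}{\pi(2j+1)}+O(1/p)$; your explicit treatment of the $d$-th power case, where the average is $1+O((k_1+k_2)/p)$, is a detail the paper leaves implicit.

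Two small fixes to the write-up. First, delete the crude bound $|c_j|\le \frac{1}{4|j+1/2|}$ and the false inequality that follows it, since it only yields $\frac12\log p$. Second, note that $\sum_j|c_j|=\frac1\pi\log p+O(1)$ costs a factor $\bigl(1+O(1/\log p)\bigr)^{k_1+k_2}$, not an arbitrary $C^{k_1+k_2}$ (a genuine $C^{k_1+k_2}$ with $C>1$ could not be absorbed). This factor is $O(1)$ only when $k_1+k_2\ll\log p$. That is exactly the range $n\le N$ in which the lemma is applied, and the paper has the same imprecision.
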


To derive this expression,  we take inspiration from \cite[Proposition 4.1]{klurman2023lqnormsmahlermeasure}.

\begin{proof}[Proof of Lemma~\ref{lemma:productofArithmeticToProbabilistic}]

First, we note that if $\X$ is a random variable uniformly distributed over the $d$-th roots of unity, 
then $\E(\X^k) = \mathds{1}(d \mid k)$, so we deduce that for $(j_1,\ldots, j_{k_1+k_2})$ a ($k_1+k_2$)-tuple of integers,

\[ \E\left( \prod_{1 \leq h \leq k_1} \X(j_h)\prod_{k_1 +1\leq h \leq k_1+k_2} \X(j_{h})^{d-1}  \right) = \mathds{1}(f_{\textbf{j},k_1} \text{ is a perfect } d\text{-th power}),  \]

\noindent
where $f_{\textbf{j},k_1}$ is defined in ~\eqref{eq:def_polynomeTuple}. Moreover, by Weil's bound (Lemma~\ref{WeilBound}), we have a similar estimate for the character sum:
\[ 
\frac{1}{p}  \sum_{K=1}^p \chi(f_{\textbf{j},k_1}(K)) = \mathds{1}(f_{\textbf{j},k_1} \text{ is a perfect } d\text{-th power}) + O\!\left(\frac{k_1+k_2}{\sqrt{p}}\right) .
\]
Thus, the two quantities are equal up to an error $ O\!\left(\frac{k_1+k_2}{\sqrt{p}}\right)$.

    Define $c_j := \dfrac{i}{p\left(e_p(j+1/2) -1\right)}$. 
    As seen in~\eqref{eq:approxCoeffSerieFourier}, we have uniformly for all $|j|< p/2$,
    \begin{equation*}
    c_j = \dfrac{1}{\pi(2j+1)} + O\!\left( \frac{1}{p} \right). 
    \end{equation*}

\noindent
    Moreover, since $\chi(j)$ and $\X(j)$ are in $\U_d$, $\overline{\chi}(j) = \chi(j)^{d-1}$ and $\overline{\X}(j) = \X(j)^{d-1}$. This gives us the following:

    \begin{align*}
        & \frac{1}{p} \sum_{K\in \F_p}g_{\chi,K}^{k_1} \overline{g_{\chi,K}}^{k_2}   \\
        =\; & \frac{1}{p}\sum_{\substack{0 \leq |j_h|  < p/2, \\ 1\leq h \leq k_1+k_2}} \;\prod_{1\leq h \leq k_1}c_{j_h}\prod_{k_1 < h \leq k_1+k_2} \overline{c_{j_{h}} } \sum_{K\in\F_p} 
         \chi(f_{\textbf{j},k_1}(K)) \\
        =\; & \sum_{\substack{0 \leq |j_h|  < p/2, \\ 1\leq h \leq k_1+k_2}} \;\prod_{1\leq h \leq k_1}c_{j_h}\prod_{k_1< h \leq k_1+k_2}  \overline{c_{j_{h}} }  
        \;\E\left( \prod_{1 \leq h \leq k_1} \X(j_h) \prod_{k_1< h \leq k_1+k_2}  \X(j_{h})^{d-1} \right) + E_{k_1,k_2} \\
        =\; &  \E\Big(G_{\X,\chi}^{k_1} \overline{G_{\X,\chi}}^{k_2}  \Big) + E_{k_1,k_2},
    \end{align*}
\noindent
    where the error term $E_{k_1,k_2}$ can be bounded as follows:
    \[ E_{k_1,k_2} \ll \frac{k_1+k_2}{\sqrt{p}}\sum_{\substack{0 \leq |j_h|  < p/2, \\ 1\leq h \leq k_1+k_2}} \; \prod_{1\leq h \leq k_1+k_2} |c_{j_h}| \ll \frac{k_1+k_2}{\sqrt{p}} \left(\frac{1}{\pi}\log p \right)^{k_1+k_2},
    \]
    and the implicit constant is absolute.
      
\end{proof}

\begin{prop}
\label{prop:momentArithmetic=Probabilistic}
For $n\in \N$, the following holds\textnormal{:} 

\[
\dfrac{1}{p}\sum_{K\in\F_p} \big( \Re ( g_{\chi,K}) \big)^n = \E\Big(\big( \Re(  G_{\X,\chi})\big) ^n \Big) + O\!\left( \frac{n}{\sqrt{p}}\left(\frac{1}{\pi} \log p\right)^n \right),
\]
\noindent
and the implicit constant is absolute.
\end{prop}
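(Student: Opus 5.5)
The plan is to reduce the statement to Lemma~\ref{lemma:productofArithmeticToProbabilistic} by writing the real part as an average of a number and its conjugate, expanding the $n$-th power binomially, and summing the resulting error terms. For any complex $w$ we have $\Re(w) = \tfrac12 (w+\overline{w})$, hence
\[
\big(\Re(w)\big)^n = \frac{1}{2^n}\sum_{k_1=0}^{n}\binom{n}{k_1} w^{k_1}\,\overline{w}^{\,n-k_1}.
\]
We apply this identity with $w=g_{\chi,K}=g_{\chi,K}(1/2)$, average over $K\in\F_p$, and interchange the finite sums. We then apply the same identity with $w=G_{\X,\chi}$ and use linearity of expectation.

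For each pair $(k_1,k_2)=(k_1,n-k_1)$, Lemma~\ref{lemma:productofArithmeticToProbabilistic} gives
\[
\frac1p\sum_{K}g_{\chi,K}^{k_1}\overline{g_{\chi,K}}^{\,k_2} = \E\big(G_{\X,\chi}^{k_1}\overline{G_{\X,\chi}}^{\,k_2}\big) + O\!\left(\frac{n}{\sqrt p}\Big(\frac1\pi\log p\Big)^n\right),
\]
with an absolute implicit constant, since $k_1+k_2=n$. The cases $k_1=0$ or $k_2=0$ are covered by the same argument; the boundary case $n=0$ is trivial.

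Summing these identities with the weights $2^{-n}\binom{n}{k_1}$ reassembles the main term into $\E\big((\Re G_{\X,\chi})^n\big)$. The total error is at most
\[
2^{-n}\sum_{k_1}\binom{n}{k_1} = 1
\]
times the uniform error bound. This gives exactly the claimed $O\!\left(\frac{n}{\sqrt p}(\frac1\pi\log p)^n\right)$, with the same absolute constant.

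The only point needing care is checking that the error term in Lemma~\ref{lemma:productofArithmeticToProbabilistic} is genuinely uniform in $(k_1,k_2)$ with an absolute constant. This holds because its proof bounds the error by $\frac{k_1+k_2}{\sqrt p}\big(\sum_{|j|<p/2}|c_j|\big)^{k_1+k_2}$. Here $\sum_j|c_j|\le \frac1\pi\log p+O(1)$, and the Weil error $(k-1)\sqrt p\le (k_1+k_2)\sqrt p$ is uniform. The binomial averaging then introduces no loss, so no further obstacle is expected.
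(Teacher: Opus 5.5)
Your proposal is correct and is exactly the paper's proof: write $\Re w=\tfrac12(w+\overline{w})$, expand binomially, apply Lemma~\ref{lemma:productofArithmeticToProbabilistic} to each term $g_{\chi,K}^{k}\overline{g_{\chi,K}}^{\,n-k}$, and use $2^{-n}\sum_k\binom{n}{k}=1$ to keep the error at $O\bigl(\frac{n}{\sqrt p}(\frac{1}{\pi}\log p)^n\bigr)$. One small remark on your uniformity comment: since $\sum_{|j|<p/2}|c_j|=\frac{1}{\pi}\log p+O(1)$ with a genuinely positive $O(1)$ term, the bound $(\frac{1}{\pi}\log p+O(1))^n\ll(\frac{1}{\pi}\log p)^n$ with an absolute constant really requires $n\ll\log p$; this caveat is inherited from the lemma itself and is harmless in the range $n\le N=\frac{\log p}{12\log_2 p}$ where the proposition is used.
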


\medskip
\begin{proof}[Proof of Proposition~\ref{prop:momentArithmetic=Probabilistic}]

By Lemma~\ref{lemma:productofArithmeticToProbabilistic}, we have

\begin{align*}
\dfrac{1}{p}\sum_{K\in\F_p} \Re( g_{\chi,K} )^n &= \dfrac{1}{p 2^n}\sum_{K\in\F_p} \sum_{k=0}^n \binom{n}{k} g_{\chi,K}^k \overline{g_{\chi,K}}^{n-k} \\
&= \dfrac{1}{ 2^n} \sum_{k=0}^n \binom{n}{k} \left( \E\Big(G_{\X,\chi}^{k} \overline{G_{\X,\chi}}^{n-k}  \Big) + E_{k,n-k} \right) \\
&= \E\left( \Re ( G_{\X,\chi})^n \right) + \dfrac{1}{ 2^n} \sum_{k=0}^n \binom{n}{k} E_{k,n-k}.
\end{align*}
\noindent
Moreover,

\begin{equation*}
\left|\dfrac{1}{ 2^n } \sum_{k=0}^n \binom{n}{k} E_{k,n-k} \right| 
\ll \dfrac{1}{ 2^n } \sum_{k=0}^n \binom{n}{k} \frac{n}{\sqrt{p}}\left(\frac{1}{\pi} \log p\right)^n 
 \ll \frac{n}{\sqrt{p}}\left(\frac{1}{\pi} \log p\right)^n.
\end{equation*}

\end{proof}

Now, we move on to the proof of Proposition~\ref{prop:linkgpkAndGX}.

\begin{proof}[Proof of Proposition~\ref{prop:linkgpkAndGX}]

Let $\mathcal{E}_p$ be the set of $K\in\F_p$ such that
\[
|g_{\chi,K}| \geq  \log_2{p} .
\]

By Lemma~\ref{lemma:HarmonicEstimation}, we get for $p$ sufficiently large

    \begin{align*}
        |g_{\chi,K}|
        &\leq \sum_{ 1 \leq |j|< (\log p)^2} \frac{1}{\pi (2j+1)} + \left| \sum_{ (\log p)^2\leq |j| < p/2}  \chi(K-j)\dfrac{1}{p(e_p(j+\frac{1}{2}) - 1)} \right| \\
        &\leq  \frac{7}{10}\log_2 p + \left| \sum_{ (\log p)^2\leq |j| < p/2}  \chi(K-j)\dfrac{1}{p(e_p(j+\frac{1}{2}) - 1)} \right| .\\
    \end{align*}

Using Corollary~\ref{coro:MajorationSommeZMN-CasParticulier} with $k = \left \lfloor \dfrac{\log p}{4 \log_2 p} \right \rfloor$ we obtain
    
    \begin{align*} 
        |\mathcal{E}_p| &\leq  \left| \left\{  K \in \F_p: \; \left| \sum_{ (\log p)^2\leq |j| < p/2}  \chi(K-j)\dfrac{1}{p(e_p(j+\frac{1}{2}) - 1)} \right| \geq \frac{3}{10}\log_2 p  \right\}  \right|  \nonumber \\
        &\leq \frac{10^{2k}}{(3\log_2 p)^{2k}} \sum_{K\in\F_p} \left| \sum_{ (\log p)^2\leq |j| < p/2} \dfrac{ \chi(K-j,p)}{p(e_p(j+\frac{1}{2}) - 1)} \right|^{2k}  \\
        &\ll p^{3/4} . \nonumber
    \end{align*}

We now outline the derivation of Proposition~\ref{prop:linkgpkAndGX}.
\begin{itemize}
    \item First, we write the exponential on the left hand side as its power series, and we will put its tail in a first error term, $\kappa_1$.
    \item Then, we complete the sum by adding back the excluded exceptional set $K\in \mathcal{E}_p$, and the quantity that we add will be $\kappa_2$.
    \item Then, we will replace the first moments of the arithmetic quantity by its probabilistic counterpart, and their difference will be $\kappa_3$.
    \item Then, we will complete the head of the power series of $\exp(2\Re( G_{\X,\chi}))$, and the tail that we add will be written as $\kappa_4$.
\end{itemize}

\medskip

    We shall cut the power series at the parameter $N := \frac{\log p}{12\log_2 p}$. The following inequality $2e^{1.2}|s|\log_2 p  \leq N$ holds, so since for $K\in \F_p \setminus \mathcal{E}_p, |g_{\chi,K}|<\log_2 p$, we have

    \begin{align}
        \label{eq:BoundError1}
        \begin{split}
    \kappa_1 
    = \left| \frac{1}{p}\sum_{K\in \F_p \setminus \mathcal{E}_p} \sum_{n=N+1}^{\infty}  \dfrac{s^n}{n!}\left( 2g_{\chi,K} \right)^n \right|  
    &\leq \sum_{n=N+1}^{\infty}  \dfrac{(2|s|\log_2 p)^n}{n!} \\
    &\leq \sum_{n=N+1}^{\infty}  \dfrac{(2e|s|\log_2 p)^n}{N^n} \\
    &\ll  \left( \dfrac{2e|s|\log_2 p}{N} \right)^{N}   \\
    &\ll e^{-0.2N} . 
            \end{split}
    \end{align}

For the second error term, we use the fact that $g_{\chi,K} \leq \log p$ to get

    \begin{align}
        \label{eq:BoundError2}
        \begin{split}
        \kappa_2 
    = \left| \frac{1}{p}\sum_{K\in  \mathcal{E}_p} \sum_{n=0}^{N} \dfrac{(2s)^n}{n!} \Re ( g_{\chi,K} ) ^n  \right|  
    &\leq  \frac{1}{p^{1/4}} \sum_{n=0}^{N} \dfrac{(2|s|\log p)^n}{n!}  \\
    &\ll  \frac{1}{p^{1/4}} (2|s|\log p)^N   \\
    &\ll p^{-1/12} .         
        \end{split}
    \end{align}

For the third error term, by using Proposition~\ref{prop:momentArithmetic=Probabilistic} we have

    \begin{align}
        \label{eq:BoundError3}
    \kappa_3 \ll \sum_{n=0}^{N} \dfrac{|s|^n n\left(\frac{2}{\pi}\log p\right)^n}{\sqrt{p}n!} \ll \dfrac{e}{\sqrt{p}} \left(\dfrac{2|s|\log p}{\pi } \right)^{N} \ll p^{-\frac{1}{6}}.
    \end{align}

    For the fourth error, we need to bound:
    \[
    \kappa_4 = \left| \sum_{n=N+1}^{\infty}  \dfrac{2^ns^n}{n!}\E \left( \Re(  G_{\X,\chi})^n \right)\right|.
    \]
\noindent
    We first note that for $ |j| < p/2,$
    \[  |c_j| = \frac{1}{|p(e_p(j+\frac{1}{2})  - 1)|} \leq \frac{1}{2 |j+\frac{1}{2}|  }.
    \] 
   
\noindent
    Therefore, we have for $n \in \N$, from Lemma~\ref{lemma:HarmonicEstimation}:
    \begin{align}
        \label{eq:BoundError4-1}
        \begin{split}
        \E \left( \left|\sum_{|j| < 4n}  \Re (c_j\mathbb{X}(j))\right|^{n} \right) 
        &\leq \E \left( \left(\sum_{|j| < 4n} \dfrac{1}{2|j+\frac{1}{2}|}\right)^{n} \right) \\ 
        &\leq \left(\log n + \gamma + 4\log 2 \right)^{n}.
        \end{split}
    \end{align}

\noindent
    Moreover, we have the following:

    \begin{align}
        \label{eqstep:BoundError4-2}
       \E \left(\left(\sum_{4n\leq |j| < \frac{p}{2}} c_j \Re( \mathbb{X}(j))\right)^{n} \right)
       &= \sum_{\substack{4n \leq |j_h|  < p/2, \\ 1\leq h \leq n}} \left( \prod_{h=1} ^{n} c_{j_h} \right) \E\left( \prod_{h=1}^n \dfrac{\X_{j_h}  +  \X_{j_h}^{d-1}}{2} \right).
    \end{align}    

\noindent
    Expanding the product on the right hand side gives us

    \[
    \E\left( \prod_{h=1}^n \dfrac{\X_{j_h}  +  \X_{j_h}^{d-1}}{2} \right) = \frac{1}{2^n}\sum_{I\subset \{1,\ldots,n\}}  \E\left( \prod_{h\notin I}\X_{j_h} \prod_{h\in I} \X_{j_h}^{d-1} \right).
    \]

\noindent
    The same combinatorial argument as in Proposition~\ref{prop:MajorationSommeMN} can be made since as seen in the 
    proof of Lemma~\ref{lemma:productofArithmeticToProbabilistic}, $\E(\X(j_1)^{\alpha_1}\cdots \X(j_n)^{\alpha_n}) \neq 0$ 
    if and only if \( (X-j_1)^{\alpha_1}\cdots (X-j_n)^{\alpha_n}\) is a perfect $d$-th power. 
    We can now bound more properly the quantity in (\ref{eqstep:BoundError4-2}) by the first term in 
    Corollary~\ref{coro:MajorationSommeZMN-CasParticulier} (since we have perfect orthogonality in this case). We get

    \begin{equation}
        \label{eq:BoundError4-2}
         \E \left(\left|\sum_{4n\leq |j| < \frac{p}{2}} c_j \Re( \mathbb{X}(j))\right|^{n} \right) 
       \leq   \left(\dfrac{8 n/2}{ 4n} \right)^{n/2} \ll 1.
    \end{equation}

\noindent
    
    By \eqref{eq:BoundError4-1}, \eqref{eq:BoundError4-2} and Minkowski's inequality we then have the following:

    \begin{align*}
        \E \left( \left|\Re( G_{\X,\chi} )\right|^{n} \right)^{\frac{1}{n}} 
        &\leq   \E \left( \left|\sum_{|j| < 4n} c_j \Re ( \X(j)) \right|^{n} \right)^{\frac{1}{n}} + \E \left( \left|\sum_{4n\leq |j| < \frac{p}{2}} c_j \Re(\X(j)) \right|^{n} \right)^{\frac{1}{n}}  \\
        &\leq \log n +5,  \\
    \end{align*}

\noindent
    which gives us
    \begin{align}
        \E \left( \left|\Re (G_{\X,\chi}) \right|^{n} \right)  \ll  ( \log n +5)^{n}.
    \end{align}

\noindent
    Thus, due to the decreasing nature of $x\mapsto \frac{\log x}{x}$ and the use of Stirling's formula, we deduce the following bound
    for the third error term:

    \begin{align} \label{eq:BoundError4}
        \begin{split}
    \kappa_4 \leq  \sum_{n\geq N+1} \dfrac{(|s| ( \log n + 5 ))^{n}}{n!} &\leq  \sum_{n\geq N+1} \left( \dfrac{2e|s| \log n}{n} \right)^{n}  \\
    &\leq \sum_{n\geq N+1} \left( \dfrac{2e|s| \log N}{N} \right)^{n} \\
    &\ll e^{-0.2 N}. 
        \end{split}
    \end{align}

    \medskip
    Combining \eqref{eq:BoundError1}, \eqref{eq:BoundError2}, \eqref{eq:BoundError3} and \eqref{eq:BoundError4}  gives us the following result:
    
    \begin{align*}
     \frac{1}{p}\sum_{K\in \F_p \setminus \mathcal{E}_p} \exp{\!\left(2s \Re ( g_{\chi,K} )  \right)} 
     = \;&  \frac{1}{p}\sum_{K\in \F_p \setminus \mathcal{E}_p} \sum_{n=0}^{N}  \dfrac{(2s)^n}{n!} \Re ( g_{\chi,K} ) ^n + O(\kappa_1)  \\
     = \;&  \frac{1}{p}\sum_{K\in \F_p } \sum_{n=0}^{N}  \dfrac{(2s)^n}{n!} \Re ( g_{\chi,K} ) ^n + O(\kappa_1 + \kappa_2) \\     
     = \;&  \sum_{n=0}^{N} \dfrac{(2s)^n}{n!} \E\left( \Re ( G_{\X,\chi} ) ^n  \right)+ O(\kappa_1 + \kappa_2 + \kappa_3)    \\
     = \;& \E\left(\exp{\!\left(2s\Re ( G_{\X,\chi} )  \right)}  \right)+ O(\kappa_1 + \kappa_2 + \kappa_3 + \kappa_4)  \\
     = \;& \E\left(\exp{\!\left(2s \Re ( G_{\X,\chi} )  \right)}  \right) + O( e^{-0.2 N} ).
    \end{align*}

\end{proof}

\medskip

\section{Proof of the lower bound}
\label{section5:proofLowerBound}

With the link between the arithmetic and probabilistic models established, we proceed to 
the saddle-point method, drawing inspiration from~\cite{LamzouriMaxCubicSum}, which will allow us to 
derive Theorem~\ref{theorem:distribution1/2CharacterSum} and the lower bound of Theorem~\ref{theorem:distributionEvenOrderCharacterSum}, 
 which we state precisely below.

 \begin{theorem}
    \label{thm:LowerBoundEvenThm}
 Let $p$ be a large prime. Uniformly for all Dirichlet characters $\chi\pmod p$ of even order $d$ and
for all real numbers $ 1\leq V \leq \frac{2}{\pi}(\log_2 p - 2\log_3 p)$, we have

\begin{equation*}
    \exp\left( - C_d^- \exp \left(\frac{\pi}{2}V  \right)  \left(  1 + O(e^{-\pi V/4 }) \right)   \right) \leq \Phi_\chi(V),
\end{equation*}
where in the last inequality,
\[ C_d^- = \frac{2}{\pi}\exp\left( -\gamma - 1 + \log \frac{\pi}{2} - \displaystyle\int_0^1 \frac{\alpha_d(u)}{u^2}\, du 
- \displaystyle\int_1^{\infty} \frac{\alpha_d(u) - u}{u^2}\, du \right),
\]
where $\gamma$ is the Euler–Mascheroni constant and $\alpha_d$ is defined in~\eqref{eq:def_alpha_d}.

 \end{theorem}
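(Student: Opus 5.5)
The plan is to bound $\Phi_\chi(V)$ from below by the proportion of $K$ for which the \emph{midpoint} value is large, and then to run a saddle point argument on $\Re(g_{\chi,K})$. At $x=1/2$ we have $2\sin(\pi x)=2$, so
\[
\Phi_\chi(V)\ \ge\ \frac1p\Bigl|\Bigl\{K\in\F_p:\ 2\Re\bigl(g_{\chi,K}\bigr)\ge V\Bigr\}\Bigr|,
\]
with the convention $g_{\chi,K}=g_{\chi,K}(1/2)$ of Section~\ref{section4:Part1LowerBound}. It therefore suffices to bound from below the tail of $2\Re(g_{\chi,K})$. This reduction loses nothing for even $d$. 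For odd $d$ it would be lossy, which is why the odd case is treated separately.

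\textbf{Step 1: the random model's Laplace transform.} Write $c_j=\frac{i}{p(e_p(j+1/2)-1)}$, so that $c_j=\frac{1}{\pi(2j+1)}+O(1/p)$ by~\eqref{eq:approxCoeffSerieFourier}. By independence,
\[
\log\E\bigl(e^{2s\Re(G_{\X,\chi})}\bigr)=\sum_{|j|<p/2}\alpha_d\!\Bigl(\frac{2s}{\pi|2j+1|}\Bigr)+O\!\Bigl(\frac{s\log p}{p}\Bigr).
\]
Here I take $\alpha_d(u)=\log\E(e^{u\Re\X})$, consistent with $\alpha_2=\log\cosh$ in $C_2^-$. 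For even $d$ we have $-1\in\U_d$, so $\Re\X$ is symmetric and $\alpha_d$ is even; this lets me ignore the sign of $2j+1$. Moreover $\alpha_d(u)=u+O(1)$ as $u\to\infty$, since $\max\Re\X=1$, and $\alpha_d(u)=O(u^2)$ near $0$.

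I will compare the sum over $j$ to an integral (Euler--Maclaurin). Splitting at $|2j+1|\approx 2s/\pi$, the large-$u$ range contributes the harmonic sum, evaluated via Lemma~\ref{lemma:HarmonicEstimation} with $-\psi(1/2)=\gamma+2\log2$. The remainder gives the integrals $\int_0^1\alpha_d(u)u^{-2}\,du$ and $\int_1^\infty(\alpha_d(u)-u)u^{-2}\,du$. The target is
\[
\log\E\bigl(e^{2s\Re G_{\X,\chi}}\bigr)=\frac{2s}{\pi}\bigl(\log s+B_d\bigr)+O(\log s),
\]
with an explicit constant $B_d$, uniformly for $1\le s\le \log p$. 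Differentiating in the same way gives the first and second derivatives in $s$: approximately $\frac{2}{\pi}(\log s+B_d+1)$ and $\frac{2}{\pi s}$.

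\textbf{Step 2: saddle point and transfer to $g_{\chi,K}$.} I choose $s$ real with $\frac{2}{\pi}(\log s+B_d+1)=V$, i.e.\ $s\asymp e^{\pi V/2}$. In the range $V\le\frac{2}{\pi}(\log_2p-2\log_3p)$ this gives $s\ll \log p/(\log_2p)^2$, so Proposition~\ref{prop:linkgpkAndGX} applies at $s$ and at $s(1\pm\eta)$.

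I will then use the standard argument of~\cite{LamzouriMaxCubicSum}. Set $\eta\asymp s^{-1/2}$ and $\delta\asymp s^{-1/2}$, and write $L(t)$ for the arithmetic Laplace transform $\frac1p\sum_{K\notin\mathcal E_p}e^{2t\Re g_{\chi,K}}$. Comparing $L(s)$ with $L(s(1\pm\eta))$ shows that the contribution to $L(s)$ from $K$ with $2\Re g_{\chi,K}\notin[V,V+\delta]$ is at most $\frac12 L(s)$. Hence
\[
\frac1p\bigl|\{K:\ 2\Re g_{\chi,K}\ge V\}\bigr|\ \ge\ \tfrac12\,e^{-s(V+\delta)}\,L(s).
\]
By Proposition~\ref{prop:linkgpkAndGX}, $L(s)=\E\bigl(e^{2s\Re G_{\X,\chi}}\bigr)$ up to the error $\exp(-\log p/(60\log_2p))$. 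This error is negligible because the main term is $\exp(-O(s))$ with $s\ll\log p/(\log_2p)^2$.

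\textbf{Step 3: read off the constant.} Plugging the saddle point into Step 2 gives the exponent
\[
-sV+\tfrac{2s}{\pi}(\log s+B_d)=-\tfrac{2s}{\pi}=-C_d^-e^{\pi V/2},
\]
with $C_d^-=\frac{2}{\pi}e^{-B_d-1}$. The losses are $s\delta$ and $O(\log s)$, both $O(s^{1/2})=O(s\,e^{-\pi V/4})$, which produces the factor $(1+O(e^{-\pi V/4}))$.

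\textbf{Main obstacle.} The hard step is the precise evaluation in Step 1. One must keep track of the Euler--Maclaurin boundary terms, the shift $j+1/2$ (whence $\gamma+2\log2$), and the $O(1/p)$ perturbation of $c_j$, including its small imaginary part in $\Re(c_j\X)$. One also needs the error in the first-derivative asymptotic to be $O(1)$ and uniform in $d$, so that the saddle point equation and the Chernoff comparisons at $s(1\pm\eta)$ give the stated error.
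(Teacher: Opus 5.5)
Your route is the paper's: reduce to $2\Re(g_{\chi,K}(1/2))\ge V$ for $K\notin\mathcal{E}_p$, transfer the Laplace transform to the random model by Proposition~\ref{prop:linkgpkAndGX}, evaluate $\sum_j\alpha_d(2s/(\pi|2j+1|))$ by splitting off the harmonic sum, and finish with a saddle point argument.

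\textbf{The gap is in Step~2.} You choose $s$ so that the saddle value $\frac{2}{\pi}(\log s+B_d+1)$ equals $V$. That puts it at the \emph{left endpoint} of your window $[V,V+\delta]$. For the part $2\Re g_{\chi,K}<V$, the comparison with $L(s(1-\eta))$ gives $\frac1p\sum_{K\notin\mathcal{E}_p,\,2\Re g_{\chi,K}<V}e^{2s\Re g_{\chi,K}}\le e^{s\eta V}L(s(1-\eta))$. Set $F(t)=\frac{2t}{\pi}(\log t+B_d)$. Then $F(s(1-\eta))-F(s)+s\eta V=\frac{2s}{\pi}\bigl((1-\eta)\log(1-\eta)+\eta\bigr)=\frac{s\eta^2}{\pi}(1+O(\eta))>0$. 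So this bound is never a proper fraction of $L(s)$, whatever $\eta$ you take. This is not an artefact of the method. Under the weight $e^{2s\Re g_{\chi,K}}$, the variable $2\Re g_{\chi,K}$ has mean $\approx V$ and variance $\approx\frac{2}{\pi s}$. Heuristically, then, about half of $L(s)$ comes from $2\Re g_{\chi,K}<V$, and the claim that the contribution outside $[V,V+\delta]$ is at most $\frac12 L(s)$ cannot be expected to hold. The window must straddle the saddle value. The paper uses $[V-\frac{2\varepsilon}{\pi},V+\frac{2\varepsilon}{\pi}]$ and compares with $L(se^{\pm\varepsilon})$. It then uses the monotonicity of $\tilde\Phi_\chi$ and shifts $V$.

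\textbf{The window width is also inconsistent with your Step~1.} Take a window of half-width $\delta$ centred at the saddle value, and let $R=\log L-F$. Both comparisons then give exponents $\frac{s\eta^2}{\pi}(1+O(\eta))-s\eta\delta+\bigl(R(s(1\pm\eta))-R(s)\bigr)$. You only claim $R=O(\log s)$. With $\eta,\delta\asymp s^{-1/2}$ the first two terms are $O(1)$, so they cannot dominate the third. You need $s\eta\delta\gg\log s$. The paper takes $\eta\approx\varepsilon=\log s/\sqrt s$ and $\delta=\frac{2\varepsilon}{\pi}$, which gives the exponent $-\frac{(\log s)^2}{\pi}$. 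The loss $s\delta\asymp\sqrt s\log s$ then yields a relative error $O(\log s/\sqrt s)=O(Ve^{-\pi V/4})$, not $O(e^{-\pi V/4})$. That is also all the paper's own argument delivers.

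Keeping $s^{-1/2}$ windows would require $R(t)-R(s)=O(1)$ for $|t-s|\ll\sqrt s$, for instance an error $O(s^{-1/2})$ in $(\log L)'$. The ``$O(1)$ error in the first derivative'' you ask for adds $O(s\eta)=O(\sqrt s)$ to these exponents, so it is far too weak. Relatedly, $\alpha_d(u)=u+O(1)$ is not uniform in $d$: for fixed $d$ it reads $u-\log d+o(1)$. Uniformly in even $d$ you only have $u-\log(2\pi\sqrt u)\le\alpha_d(u)\le u$ for $u\ge1$, via $\alpha_d\ge\log I_0$ and Lemma~\ref{lemma:boundsI0}. This is where the paper's $O(\log s)$ comes from.

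\textbf{Check the constant rather than reading it off.} Your decomposition gives $B_d=\gamma+\log\frac4\pi+\int_0^1\frac{\alpha_d(u)}{u^2}\,du+\int_1^\infty\frac{\alpha_d(u)-u}{u^2}\,du$. So $\frac2\pi e^{-B_d-1}$ contains $\log\frac\pi4$ where the statement has $\log\frac\pi2$, and the stated $C_d^-$ is twice yours. A larger $C_d^-$ only weakens the inequality, so your bound still implies the statement.
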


\begin{remark}
    Applying the same method as in the proof of Theorem~\ref{thm:LowerBoundEvenThm}, one can obtain 
    an analogous lower bound in the case of odd order characters. However, taking the real part of the $g_{\chi,K}$ instead of
    its absolute value leads to a decay scale of the form $\exp\left(-\exp(\frac{\pi}{1+\cos\frac{\pi}{d}}V)\right)$ rather than the sharper factor $\frac{1}{2\cos\frac{\pi}{2d}}$
    that we have in Theorem~\ref{theorem:distributionOddOrderCharacterSum}. 
    As a consequence, this approach is insufficient to capture the correct behavior for small values of the order \(d\). 
    Therefore, in Section~\ref{section6:proofoddLowerBound}, we exploit the effective independence of character values on short intervals to obtain the desired result, yielding the optimal dependence on \(d\).

    Nevertheless, this approach has the advantage of producing fully explicit constants, and in certain regimes of the parameter \(d\), the 
    resulting bounds are sharp. In particular, when \(d \gg \exp(V)\), one recovers the same results as Theorem~\ref{theorem:distributionOddOrderCharacterSum}
     with better constants.  

\end{remark}

\vspace{0.5em}Take $s\in \R, 2 \leq s  \leq \dfrac{\log p}{100(\log_2 p)^2}$, and define
\[
\tilde{\Phi}_\chi(V) = \dfrac{1}{p} \left|\left\{K \in \F_p\setminus \mathcal{E}_p : \; 2\Re(g_{\chi,K}) \geq V \right\} \right|,
\]
where $\mathcal{E}_p$ is defined in the proof of Proposition~\ref{prop:linkgpkAndGX}. 
The quantity defined above is a lower bound for $\Phi_\chi(V)$ because we have the following
(see Definition~\ref{def:g-auxiliary} for the link between $f_\chi$ and $g_{p,K}$):
\[\max_{x\in[0,1]} \left|\frac{f_\chi(e_p(K+x))}{\sqrt{p}}\right|  \geq \left|\frac{f_\chi(e_p(K+\frac{1}{2})}{\sqrt{p}}\right| = 2\left|g_{\chi,K}\left(\frac{1}{2}\right)\right| \geq 2\Re(g_{\chi,K}).
\]
\noindent
Proposition~\ref{prop:linkgpkAndGX} gives the following:

\begin{align*}
    \int_{-\infty}^{\infty} e^{st}\tilde{\Phi}_\chi(t)\,dt 
    &= \dfrac{1}{p} \sum_{K\in\F_p\setminus \mathcal{E}_p} \int_{-\infty}^{ 2\Re(g_{\chi,K})} e^{st}dt  \\
    &= \dfrac{1}{p} \sum_{K\in\F_p\setminus \mathcal{E}_p}  \exp \left(  2 s \Re(g_{\chi,K}) \right) \\
    &= \E \left( \exp \left( 2s \Re(G_{\X,\chi}) \right) \right)  + O\!\left( \exp{\!\left( - \frac{\log p }{60\log_2 p} \right)} \right).
\end{align*}

By definition of $G_{\X,\chi}$ and the approximation of its coefficients~\eqref{eq:approxCoeffSerieFourier}, we have
\begin{align*}
\Re(G_{\X,\chi}) =\; &\frac{1}{\pi}\sum_{j\leq (\log p)^2} \frac{\Re(\X(j))}{2j+1} + O\!\left( \frac{(\log p)^2}{p} \right) \\
+ \;&\frac{1}{p}\sum_{ (\log p)^2 < |j| < p/2} \Re\left(\frac{i\X(j)}{e_p(j+\frac{1}{2})-1} \right).
\end{align*}

The $\X(j)$ are independent uniformly distributed on the $d$-th roots of unity, thus, we can write the expected value as follows
\begin{align*} 
\E \left( \exp \left( 2s \Re(G_{\X,\chi}) \right) \right) 
=P_1 \cdot P_2 \cdot \exp\left( O\!\left( \frac{(\log p)^2}{p} \right)  \right),
\end{align*}
where 
\[
P_1 = \prod_{|j|\leq (\log p)^2}\left(\frac{1}{d}\sum_{k=0}^{d-1} \exp\left(\frac{2s\cos\left( \frac{2k\pi}{d} \right)}{\pi(2j+1)}\right)\right)   \\
\]
gives the main contribution and
\[
P_2 =  \prod_{(\log p)^2 < |j| \leq p/2} \left(\frac{1}{d}\sum_{k=0}^{d-1}\exp\left(\frac{s}{p}\left(\cos \frac{2k\pi}{d} \cot\left( \frac{\pi(2j+1)}{2p} \right) + \sin \frac{2k\pi}{d} \right)   \right)  \right) \nonumber \\
\]
is a negligible term. In order to estimate $P_1$ and $P_2$, we will study their logarithms.

\begin{prop} \label{prop:estimateLogEsperance}
     For $s\in \C, |s| \leq \dfrac{\log p}{\log_3 p}$, the following holds for even $d$\textnormal{:}

      \[
      \log\E \left( \exp \left( 2s \Re(G_{\X,\chi}) \right) \right)  = \frac{2s}{\pi}\log s + C_d s+ O(\log s),
      \]
    \noindent
    with $C_d := \hat{C}_d + \frac{2}{\pi}\int_{0}^1 \frac{\alpha_d(u)}{u^2}\,du$ where $\hat{C}_d$ is defined in Lemma~\ref{lemma:HeadailOfEstimate} and $\alpha_d$ in~\eqref{eq:def_alpha_d}.
\end{prop}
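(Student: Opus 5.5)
We start from the factorization already written down, $\E(\exp(2s\Re G_{\X,\chi})) = P_1 P_2 \exp(O((\log p)^2/p))$, and estimate $\log P_1$ and $\log P_2$ separately.

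\textbf{Definition of $\alpha_d$.} We take $\alpha_d(u) := \log\bigl(\frac{1}{d}\sum_{k=0}^{d-1} e^{u\cos(2\pi k/d)}\bigr)$, which is presumably the function of~\eqref{eq:def_alpha_d}. With this notation the $j$-th factor of $P_1$ is $\exp\bigl(\alpha_d\bigl(\tfrac{2s}{\pi(2j+1)}\bigr)\bigr)$.

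\textbf{Properties of $\alpha_d$.} Since $d$ is even, $-1\in\U_d$, so $\alpha_d$ is even. The mean of $\cos(2\pi k/d)$ vanishes, so $\alpha_d(u)=O(u^2)$ for $|u|\le 1$. For large $|u|$ the terms with $\cos=\pm1$ dominate, giving $\alpha_d(u)=|u|-\log d+O(e^{-c_d|u|})$. We extend $\alpha_d$ to complex $u$ in a strip; since $|s|$ is allowed to be complex, the estimates should be made for real $s$ and extended to a sector by analyticity, with bounds of the same shape.

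\textbf{Pairing and the basic sum.} Pair $j$ with $-1-j$, which have $2j+1$ of opposite sign. Because $\alpha_d$ is even, this gives
\[
\log P_1 = 2\sum_{0\le j\le (\log p)^2}\alpha_d\Bigl(\frac{2s}{\pi(2j+1)}\Bigr),
\]
up to one boundary term. Split at $2j+1\approx 2s/\pi$, i.e.\ at $u=1$.

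\textbf{Large-argument range $2j+1\le 2s/\pi$.} Here we write $\alpha_d(u)=u+(\alpha_d(u)-u)$. The linear part gives
\[
\frac{2s}{\pi}\sum_{2j+1\le 2s/\pi}\frac{2}{2j+1}=\frac{2s}{\pi}\Bigl(\log s+\gamma+\log\tfrac{4}{\pi}\cdots\Bigr)+O(1)
\]
by Lemma~\ref{lemma:HarmonicEstimation} applied with $x=1/2$. The remainder $\alpha_d(u)-u$ is $O(1)$ per term and is summed by Euler--Maclaurin. Comparing the sum over odd $n=2j+1$ with the integral $\int \frac{du}{u^2}\cdot\frac{s}{\pi}$ after the substitution $u=2s/(\pi n)$ turns it into $\frac{2s}{\pi}\int_1^\infty\frac{\alpha_d(u)-u}{u^2}\,du$. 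The Euler--Maclaurin error is $O(\log s)$: the derivative of the summand is bounded in this range, and the harmonic-type boundary terms contribute $\log s$ at worst.

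\textbf{Small-argument range $2j+1>2s/\pi$.} Here the same substitution converts the sum into $\frac{2s}{\pi}\int_0^1\frac{\alpha_d(u)}{u^2}\,du$. The truncation at $j=(\log p)^2$ contributes $O(s^2/(\log p)^2)=O(1)$, and since $|s|\le \log p/\log_3 p$ this is even $o(s)$, which suffices. Collecting all constants ($\gamma$, $\log(2/\pi)$, the boundary corrections) gives $\hat C_d$ from Lemma~\ref{lemma:HeadailOfEstimate}. That lemma presumably packages exactly the head/tail computation in the large-argument range, so we would invoke it directly.

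\textbf{Estimate for $P_2$.} Each factor of $P_2$ is $\exp(\alpha_d(\text{small}))$ with arguments of size $s/(\pi j)+O(s/p)$. The linear Taylor term vanishes on average over the $d$-th roots, since $\sum_k \cos(2\pi k/d)=\sum_k\sin(2\pi k/d)=0$. So each log-factor is $O(s^2/j^2+s^2/p^2)$, and summing over $j>(\log p)^2$ gives $O(s^2/(\log p)^2)+O(s^2/p)=O(1)$.

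\textbf{Main obstacle.} The delicate part is obtaining an $O(\log s)$ error rather than $O(1)\cdot s$ in the Euler--Maclaurin comparison near the transition $u\asymp 1$. It also requires the complex-$s$ uniformity, which needs $\alpha_d$ bounded and analytic on a neighbourhood of the relevant ray. I would handle the latter by restricting to $|\arg s|$ small, as is customary in saddle-point applications. If needed, I would instead use only real $s$ together with the bound $|\E e^{2s\Re G}|\le \E e^{2\Re(s)\Re G}$ off the real axis.
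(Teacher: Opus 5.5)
Your proposal is essentially the paper's proof. The paper obtains the proposition by combining three lemmas: Lemma~\ref{lemma:TailOfEstimate} for $P_2$ (vanishing first moment over $\U_d$); Lemma~\ref{lemma:MiddleSumOfEstimate} for $s/\pi<|j|\le(\log p)^2$, which gives $\frac{2s}{\pi}\int_0^1\alpha_d(u)u^{-2}\,du$; and Lemma~\ref{lemma:HeadailOfEstimate} for $|j|\le s/\pi$, which writes $\alpha_d(u)=u+(\alpha_d(u)-u)$ and uses $H_n(\tfrac12)$ with the substitution $u=2s/(\pi(2t+1))$, yielding the same $C_d$.

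Two refinements are worth noting. First, the $O(\log s)$ in the head range comes from the monotonicity of $u\mapsto\alpha_d(u)-u$ together with $0\le u-\alpha_d(u)\le\log(2\pi\sqrt u)$ (from $\alpha_d\ge\log I_0$ and Lemma~\ref{lemma:boundsI0}), not from a bounded derivative; so the sum--integral error is governed by the endpoint $u=2s/\pi$ rather than by the transition $u\asymp 1$, and it is uniform in $d$. Second, the paper's head and middle lemmas are likewise proved only for real $2\le s\le\log p/\log_3 p$, which is all the saddle-point step uses, so you can drop the complex extension; it cannot hold on the whole disc anyway, since for $d=2$ the $j=0$ factor $\cosh(2s\Re(c_0))$ vanishes at a purely imaginary $s$ of modulus about $\pi^2/4$.
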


To prove Proposition~\ref{prop:estimateLogEsperance}, we shall first prove in the next lemma that $P_2$ is indeed a negligible term.
\begin{lemma} \label{lemma:TailOfEstimate}
    For $s\in \C, |s| \leq \dfrac{\log p}{\log_3 p}$, the following holds:
    \[
    \sum_{(\log p)^2 < |j|\leq p/2}\log\!\left(\frac{1}{d}\sum_{k=0}^{d-1}\exp\left(\frac{s}{p}\left(\cos \frac{2k\pi}{d} \cot\left( \frac{\pi(2j+1)}{2p} \right) + \sin \frac{2k\pi}{d} \right)   \right)  \right) = O\!\left(\frac{1}{(\log_3 p)^2}\right).
    \]
\end{lemma}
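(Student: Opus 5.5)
Write $w_j := \frac{s}{p}\cot\!\left(\frac{\pi(2j+1)}{2p}\right)$ and $v := \frac{s}{p}$. The $k$-th exponent is then $w_j\cos\frac{2k\pi}{d} + v\sin\frac{2k\pi}{d}$. The plan is to Taylor-expand each factor $\frac{1}{d}\sum_k \exp(\cdot)$ to second order, show that the first-order term averages to zero, and bound the quadratic remainder by the sum of $1/j^2$ over $|j|>(\log p)^2$.

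\textbf{Step 1: size of the exponents.} For $(\log p)^2<|j|\le p/2$ we have $|\cot(\pi(2j+1)/(2p))| \ll p/|j|$, so $|w_j|\ll |s|/|j|$. Also $|v| = |s|/p \le |s|/|j|$. Since $|s|\le \log p/\log_3 p$ and $|j|>(\log p)^2$, each exponent $z_{j,k}$ satisfies
\[
|z_{j,k}| \ll \frac{|s|}{|j|} \le \frac{1}{\log p\,\log_3 p} = o(1).
\]

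\textbf{Step 2: second-order expansion.} Using $e^{z}=1+z+O(|z|^2)$ for $|z|\le 1$, we get
\[
\frac{1}{d}\sum_{k=0}^{d-1} e^{z_{j,k}} = 1 + \frac{1}{d}\sum_{k=0}^{d-1} z_{j,k} + O\!\left(\frac{|s|^2}{j^2}\right).
\]
For $d\ge2$ both $\sum_k \cos\frac{2k\pi}{d}$ and $\sum_k \sin\frac{2k\pi}{d}$ vanish, since $\sum_k e_d(k)=0$. Hence the linear term is exactly $0$, and each factor equals $1+O(|s|^2/j^2)$. Because this quantity is $o(1)$, we may take the principal branch of $\log$, and $\log(1+u)=O(|u|)$ gives
\[
\log\!\left(\frac{1}{d}\sum_{k=0}^{d-1} e^{z_{j,k}}\right) = O\!\left(\frac{|s|^2}{j^2}\right).
\]
This bound is uniform in $d$ and holds also for complex $s$, since only $|z|$ enters the Taylor bound.

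\textbf{Step 3: summation.} Summing over $(\log p)^2<|j|\le p/2$ gives
\[
\sum_{(\log p)^2<|j|\le p/2} O\!\left(\frac{|s|^2}{j^2}\right) = O\!\left(\frac{|s|^2}{(\log p)^2}\right) = O\!\left(\frac{1}{(\log_3 p)^2}\right),
\]
using $|s|\le \log p/\log_3 p$.

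\textbf{Main obstacle.} The only delicate point is the cotangent bound near the endpoint $|j|\approx p/2$. There the argument $\pi(2j+1)/(2p)$ approaches $\pm\pi/2$, so $\cot$ is small rather than large, and $|\cot(\pi(2j+1)/(2p))|\ll p/|j|$ remains valid. I would verify this by using $|\cot y|\le 1/|y|$ for $0<|y|<\pi$ together with $|2j+1|\asymp |j|$ for $|j|\ge1$. One should also check that the edge index $j$ with $|2j+1|$ close to $p$ stays inside $(-\pi,\pi)$, which holds since $|j|\le p/2$ and $p$ is odd. The crude bound $|v|\le |s|/|j|$ also requires $|j|\le p$, which holds throughout the range. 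With these checks the argument is a routine second-order expansion, and the vanishing of the first moment of the uniform distribution on $\U_d$ is the key point.
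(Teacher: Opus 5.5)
Your proposal is correct and follows the paper's proof essentially step for step. You bound the exponents by $O(|s|/|j|)$, expand each factor to second order, use $\sum_k \cos\frac{2k\pi}{d}=\sum_k \sin\frac{2k\pi}{d}=0$ to kill the linear term, and sum $|s|^2/j^2$ over $|j|>(\log p)^2$.

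One small correction: $|\cot y|\le 1/|y|$ holds only for $0<|y|\le \pi/2$, and fails for $|y|$ near $\pi$. This is harmless, because $p$ odd and $|j|\le p/2$ give $|2j+1|\le p$. Hence $\pi(2j+1)/(2p)\in[-\pi/2,\pi/2]$, which is exactly the range the paper uses via $0\le x\cot x\le 1$.
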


\begin{proof}[Proof of Lemma~\ref{lemma:TailOfEstimate}]
    For  $x\in \left[ -\frac{\pi}{2}, \frac{\pi}{2}\right]$, $0 \leq  x \cot(x) \leq 1 $, thus we have the following for $(\log p)^2 < |j| < p/2$ and $0\leq k \leq d-1$,

    \begin{align*}
        &\frac{s}{p}\left(\cos\left( 2k\pi/d \right)\cot\left( \frac{\pi(2j+1)}{2p} \right) + \sin\left( 2k\pi/d \right) \right) \\
        =\;& \frac{2s}{\pi( 2j+1)}\left(\frac{\pi(2j+1)}{2p} \cot\left( \frac{\pi(2j+1)}{2p} \right) \cos\left( 2k\pi/d \right) + \frac{\pi(2j+1)}{2p}\sin\left( 2k\pi/d \right) \right) \\
        =\;& O\!\left( \frac{|s|}{j}\right) = o(1).
    \end{align*}
   
    If we write $\alpha_{j,k} := \frac{\pi(2j+1)}{2p} \cot\left( \frac{\pi(2j+1)}{2p} \right) \cos\left( 2k\pi/d \right) + \frac{\pi(2j+1)}{2p}\sin\left( 2k\pi/d \right)$, since $\sum_{k=0}^{d-1}\cos\left( 2k\pi/d \right) = \sum_{k=0}^{d-1}\sin\left( 2k\pi/d \right) = 0 $, we have

    \begin{align*}
    & \frac{1}{d}\sum_{k=0}^{d-1}\exp\left(\frac{s}{p}\left(\cos\left( 2k\pi/d \right)\cot\left( \frac{\pi(2j+1)}{2p} \right) + \sin\left( 2k\pi/d \right) \right)   \right) \\
    =\;& 1 + \frac{2s}{\pi d(2j+1)}\sum_{k=0}^{d-1} \alpha_{j,k} + O\!\left( \frac{|s|^2}{j^2}\right) \\
    =\;& 1 + O\!\left( \frac{|s|^2}{j^2}\right),
    \end{align*}
\noindent
    and the implicit constant in the big $O$ is absolute. Thus
    \begin{align*}
       \sum_{(\log p)^2 < |j|\leq p/2}\log\!\left(\frac{1}{d}\sum_{k=0}^{d-1}\exp\left(\alpha_{j,k}  \right)  \right) 
       &= \sum_{(\log p)^2 < |j|\leq p/2}   O\!\left( \frac{|s|^2}{j^2}\right)   
       =  O\!\left(\frac{1}{(\log_3 p)^2}  \right).
    \end{align*}

\end{proof}

Having established a bound for $P_2$, it remains to analyze $P_1$. In order to do so, 
we will need a few results on the modified Bessel function. For $n\in\Z$, $x\in\R$, let 
\begin{equation} \label{def:I0}
I_n(x):= \displaystyle\sum_{k=0}^{\infty} \dfrac{1}{k!(k+n)!}\left( \dfrac{x}{2}\right)^{2k+n}.     
\end{equation}

$I_n$ is called the modified Bessel function of the first kind. 
It is usually defined as one of the solutions of the following differential equation 
(see \cite{WatsonBesselFct} for a standard reference):
\[
z^2 \dfrac{d^2w}{dz^2} + z\dfrac{dw}{dz} - (z^2+n^2)w = 0.\]

For $x\in\R$, define the function $h_x:\theta \mapsto e^{x\cos \theta}$ for $\theta\in\R$. To estimate the sum in the log on the left hand side of
Lemma~\ref{lemma:HeadailOfEstimate}, we shall use the Fourier expansion of $h_x$ since it is 
$2\pi-$periodic:
\[h_x(\theta) = a_0 + \sum_{n\in\N} a_n \cos(n\theta) + b_n \sin(n\theta),\]
where for $n\geq 1$,
\[
a_n = \frac{1}{\pi}\int_{-\pi}^\pi h_x(\theta)\cos(n\theta) \, d\theta \text{ and } b_n = \frac{1}{\pi}\int_{-\pi}^\pi h_x(\theta)\sin(n\theta) \, d\theta.
\]
Since $h_x$ is even, for $n\in\N,$  $b_n = 0$, $a_n = \dfrac{2}{\pi} \displaystyle \int_0^\pi e^{x\cos \theta}\cos(n\theta) \,d\theta = 2I_n(x)$, 
and $a_0 = \dfrac{1}{\pi} \displaystyle \int_0^\pi e^{x\cos \theta} \,d\theta = I_0(x)$, see for instance [\cite{WatsonBesselFct}, p.181, Formula 4] 
for the first two equalities.
This gives us the following:
\begin{equation}\label{eq:serieAvecCoeffFctBessel}
    h_x(\theta) = I_0(x) + 2\sum_{n=1}^{\infty} I_n(x) \cos(n\theta),
\end{equation}
and if we average $h_x$ over $\{\theta =  \frac{2k\pi}{d} : \; 0\leq k < d \} $, we get
\begin{equation} \label{eq:serieAvecCoeffBessel-unityRoot}
    \frac{1}{d}\sum_{k=0}^{d-1} h_x\left( \frac{2k\pi}{d}\right) = I_0(x) + 2 \sum_{n\geq 1} I_{nd}(x).
\end{equation}
Indeed, averaging $\cos(n\theta)$ over $\{\theta =  \frac{2k\pi}{d} : \; 0\leq k < d \}$ yields $1$ precisely when $d\mid n$, and vanishes otherwise. 
Moreover, it is straightforward from~\eqref{def:I0}
that $I_n$ has the same parity as $n$ and that if $x>0, n \mapsto I_n(x)$ is decreasing and positive.

\medskip

Let $d\geq 2$ an even integer, for $t\in\R$, we define $\alpha_d:\R\rightarrow \R$ as:
\begin{equation} \label{eq:def_alpha_d}
        \alpha_d(u) := \log\left(\frac{1}{d}\sum_{k=0}^{d-1} \exp\left(u\cos\left( \frac{2k\pi}{d} \right)\right)\right).
\end{equation}
\begin{lemma} \label{lemma:estimates_alpha_d}
    Let $d\geq 2$ an even integer. The function $\alpha_d$ satisfies the following estimates\textnormal{:}

\begin{enumerate}
    \item For $u\in \R$,  $0 \leq \log I_0(u) \leq \alpha_d(u) \leq |u| $. \label{eq:property1-alphad}
    \item For $u\in \R$,  $|\alpha_d'(u))| \leq 1$.  \label{eq:property2-alphad}
    \item 
    $\alpha_d(u) \underset{|u|\to 0}{\sim} 
\begin{cases}
 \frac{u^2}{2},\; \text{ if $d=2$,}
 \\
 \frac{u^2}{4},\; \text{ if $d>2$.} 
\end{cases}$ \label{eq:property3-alphad}
    \item 
    $\alpha_d'(u) \underset{|u|\to 0}{\sim} 
\begin{cases}
 u,\; \text{ if $d=2$,}
 \\
 \frac{u}{2},\; \text{ if $d>2$.}
\end{cases}$ \label{eq:property4-alphad}
    \item $\alpha_d$ is an even function, decreasing on $(-\infty,0]$ and increasing on~$[0,\infty)$. \label{eq:property5-alphad}
    \item $u\mapsto \alpha_d(u) - u$ is non-increasing on $\R$. \label{eq:property6-alphad}
\end{enumerate}

\end{lemma}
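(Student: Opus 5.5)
The plan is to work throughout with the Bessel representation \eqref{eq:serieAvecCoeffBessel-unityRoot}. For even $d$ it gives
\[
e^{\alpha_d(u)}=\frac1d\sum_{k=0}^{d-1}e^{u\cos(2k\pi/d)}=I_0(u)+2\sum_{n\ge1}I_{nd}(u).
\]
Each $I_n$ has the parity of $n$, and $nd$ is even, so every term is an even function of $u$. This immediately gives that $\alpha_d$ is even, which is the first half of (5).

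For (1), I will use that for $u\ge 0$ all $I_{nd}(u)$ are positive. Hence $e^{\alpha_d(u)}\ge I_0(u)\ge 1$, since $I_0(u)=\sum_k (u/2)^{2k}/(k!)^2\ge1$. On the other side, every summand $e^{u\cos(2k\pi/d)}$ is at most $e^{|u|}$, so $\alpha_d(u)\le|u|$. Evenness then extends both inequalities to all $u\in\R$.

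For (2), write $\alpha_d'(u)$ as a weighted average:
\[
\alpha_d'(u)=\frac{\sum_k \cos(2k\pi/d)\,e^{u\cos(2k\pi/d)}}{\sum_k e^{u\cos(2k\pi/d)}}.
\]
The weights are positive and each $\cos(2k\pi/d)$ lies in $[-1,1]$, so $|\alpha_d'(u)|\le1$. Property (6) follows directly, because $(\alpha_d(u)-u)'=\alpha_d'(u)-1\le0$.

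For the monotonicity part of (5), I want $\alpha_d'(u)\ge0$ for $u\ge0$. The numerator above equals $\frac{d}{du}\bigl(I_0(u)+2\sum_{n\ge 1}I_{nd}(u)\bigr)$. From the power series \eqref{def:I0}, for $u>0$ every $I_n$ with $n\ge0$ has nonnegative coefficients and only positive powers of $u$ beyond the constant term, so each $I_n$ is nondecreasing on $[0,\infty)$. Hence the numerator is $\ge0$ for $u\ge 0$, and evenness gives the behaviour on $(-\infty,0]$.

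For (3) and (4), I will Taylor expand. Let $c_k=\cos(2k\pi/d)$. Then
\[
\frac1d\sum_k e^{uc_k}=1+\frac{u}{d}\sum_k c_k+\frac{u^2}{2d}\sum_k c_k^2+O(u^3).
\]
The linear sum vanishes because $d\ge2$. For the quadratic term, $\sum_k c_k^2=\frac d2+\frac12\sum_k\cos(4k\pi/d)$. The last sum equals $d$ when $d\mid2$, that is $d=2$, and $0$ otherwise. So $\frac1d\sum_k c_k^2$ is $1$ for $d=2$ and $\frac12$ for $d>2$. Alternatively, from the Bessel form, $I_0(u)=1+u^2/4+\dots$ and $2I_d(u)=u^2/4+\dots$ only when $d=2$. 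Taking logarithms gives $\alpha_d(u)=\frac{u^2}{2}$ or $\frac{u^2}{4}$ up to $O(u^3)$. Since $\alpha_d$ is analytic, differentiating the expansion term by term gives (4).

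The only mildly delicate point is the monotonicity in (5). The argument above settles it using positivity of the coefficients in \eqref{def:I0} together with the Bessel identity. Everything else is routine.
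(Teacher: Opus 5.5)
Your proposal is correct and follows essentially the same route as the paper. Both use the Bessel expansion $I_0+2\sum_{n\ge1}I_{nd}$ with nonnegative, even, monotone terms for (1) and (5), the weighted-average form of $\alpha_d'$ for (2) and (6), and a second-order Taylor expansion using $\sum_k\cos^2(2k\pi/d)=d$ for $d=2$ and $d/2$ for $d>2$ for (3)--(4). The only slips are cosmetic: the numerator of $\alpha_d'$ is $d$ times the $u$-derivative of $I_0+2\sum_{n\ge1}I_{nd}$ rather than equal to it, and since $I_{nd}(0)=0$ for $n\ge1$, ``positive'' should read ``nonnegative''; neither affects the argument.
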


\begin{proof}[Proof of Lemma~\ref{lemma:estimates_alpha_d}] We treat the above estimates individually.

\begin{enumerate}
    \item  From the definition of $I_0$, the non-negativity of $I_d$ on $\R$ when $d$ is even and 
    equation~\eqref{eq:serieAvecCoeffBessel-unityRoot}, we get for $u\in\R$
    \[
    0\leq \log I_0(u) \leq \log\left(I_0(u) + 2\sum_{n\geq 1}I_{nd}(u) \right) = \alpha_d(u).
    \] 
    The last inequality in~\eqref{eq:property1-alphad} comes from $\exp\left(u\cos\left( \frac{2k\pi}{d} \right)\right) \leq \exp(|u|)$ for $0\leq k < d$.
    
    \item 
    \[
    \alpha_d'(u) = \dfrac{ \sum_{k=0}^{d-1}\cos\left( \frac{2k\pi}{d} \right) \exp\left(u\cos\left( \frac{2k\pi}{d} \right)\right)}{ \sum_{k=0}^{d-1} \exp\left(u\cos\left( \frac{2k\pi}{d} \right)\right)},
    \]
    so taking the absolute value and applying the triangle inequality gives~\eqref{eq:property2-alphad}.
    
    \item Let us recall that $\sum_{k=0}^{d-1}\cos\left(  \frac{2k\pi}{d} \right)=0$ and $\sum_{k=0}^{d-1}\cos^2\left( \frac{2k\pi}{d} \right) = \frac{d}{2}(1+\mathds{1}_{d=2 })$, then
    \[
    \exp(\alpha_d(u)) \underset{u\to 0}{=} 1 + \frac{u}{d}\sum_{k=0}^{d-1}\cos\left( 2k\pi/d \right) + \frac{u^2}{2d}\sum_{k=0}^{d-1}\cos\left( 2k\pi/d \right)^2 + O(u^3).
    \]
    \item 
    The same calculations we did for~\eqref{eq:property3-alphad} on $\alpha_d'$ gives~\eqref{eq:property4-alphad}.

    \item     
    Recall that $\alpha_d(u) = \log\Big(I_0(u) + 2\sum_{n\geq 1}I_{nd}(u)\Big)$. Each $I_{nd}$, $n\geq 0$ is even, decreasing on $(-\infty,0]$ and increasing on~$[0,\infty)$. Combining that with the fact that the logarithm is 
    increasing on $(0,\infty)$ proves~\eqref{eq:property5-alphad}.
    \item 
    Taking the derivative we have computed for~\eqref{eq:property2-alphad} and substracting one gives
    \[
    \dfrac{ \sum_{k=0}^{d-1}\left(\cos\left( \frac{2k\pi}{d} \right)-1\right) \exp\left(u\cos\left( \frac{2k\pi}{d} \right)\right)}{ \sum_{k=0}^{d-1} \exp\left(u\cos\left( \frac{2k\pi}{d} \right)\right)} \leq 0,
    \]
    since each term in the numerator is non-positive, and the denominator is positive. That proves~\eqref{eq:property6-alphad}.
\end{enumerate}

\end{proof}

We require information on the function $I_0$, which we state in the following lemma.

\begin{lemma} \label{lemma:boundsI0}
    For all $u\ge 1$, one has the bounds
     \[
    \dfrac{e^u}{2\pi  \sqrt{u}}  \leq I_0(u) \leq e^u.
    \]  
    Moreover, $u\mapsto e^{-u}I_0(u)$ is decreasing on $(0,\infty)$.

\end{lemma}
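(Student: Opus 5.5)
The plan is to work from the integral representation $I_0(u)=\frac{1}{\pi}\int_0^\pi e^{u\cos\theta}\,d\theta$, which we already used when computing the Fourier coefficients of $h_x$ (equivalently, from the series~\eqref{def:I0} with $n=0$).

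\textbf{Upper bound.} Since $\cos\theta\le 1$, the integrand is at most $e^u$. Hence $I_0(u)\le e^u$ for all $u\ge 0$, and in particular for $u\ge 1$.

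\textbf{Monotonicity of $e^{-u}I_0(u)$.} Write
\[
e^{-u}I_0(u)=\frac{1}{\pi}\int_0^\pi e^{-u(1-\cos\theta)}\,d\theta .
\]
For $\theta\in(0,\pi]$ we have $1-\cos\theta>0$, so for each such $\theta$ the integrand is strictly decreasing in $u$. Differentiating under the integral sign, which is justified since the integrand is smooth and bounded on compact $u$-ranges, gives
\[
\frac{d}{du}\bigl(e^{-u}I_0(u)\bigr)=-\frac{1}{\pi}\int_0^\pi(1-\cos\theta)e^{-u(1-\cos\theta)}\,d\theta<0 .
\]
So $e^{-u}I_0(u)$ is decreasing on $(0,\infty)$.

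\textbf{Lower bound.} Use $1-\cos\theta=2\sin^2(\theta/2)\le \theta^2/2$. This gives
\[
e^{-u}I_0(u)\ge \frac{1}{\pi}\int_0^\pi e^{-u\theta^2/2}\,d\theta .
\]
Substitute $t=\theta\sqrt{u}$:
\[
\frac{1}{\pi}\int_0^\pi e^{-u\theta^2/2}\,d\theta=\frac{1}{\pi\sqrt{u}}\int_0^{\pi\sqrt u}e^{-t^2/2}\,dt .
\]
For $u\ge 1$ the upper limit is at least $\pi$, so the integral is at least $\int_0^\pi e^{-t^2/2}\,dt$. This is at least roughly $\sqrt{\pi/2}-\varepsilon\approx 1.25>1/2$. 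An elementary way to see it exceeds $1/2$: on $[0,1]$ the integrand is at least $e^{-1/2}>1/2$, so already $\int_0^1 e^{-t^2/2}\,dt>1/2$. Therefore
\[
I_0(u)\ge \frac{e^u}{2\pi\sqrt u},
\]
which is the claimed lower bound, with room to spare.

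\textbf{Main obstacle.} There is no real obstacle here; the only care needed is to keep the lower-bound constant explicit rather than asymptotic, which the crude estimate $\int_0^1 e^{-t^2/2}\,dt\ge 1/2$ handles. An alternative route would be to start from the Hankel asymptotic $I_0(u)\sim e^u/\sqrt{2\pi u}$, but that would require an effective error term, so the direct integral bound is preferable.
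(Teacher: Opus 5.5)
Your proof is correct and essentially the paper's: it uses the same integral representation, the same inequality $\cos t\ge 1-t^2/2$ (your final restriction to $t\in[0,1]$ is exactly the paper's restriction to $\theta\in[0,1/(\pi\sqrt u)]$ in the normalization $\int_0^1 e^{u\cos\pi\theta}\,d\theta$), and the same differentiation under the integral sign for monotonicity. The only differences are cosmetic: you get the upper bound directly from $\cos\theta\le 1$ rather than from property (1) of the $\alpha_d$ lemma, and your Gaussian substitution actually yields a constant close to the optimal $1/\sqrt{2\pi}$ before you weaken it to $1/(2\pi)$.
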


\begin{proof}[Proof of Lemma~\ref{lemma:boundsI0}]
    The upper bound comes from~\eqref{eq:property1-alphad} of Lemma~\ref{lemma:estimates_alpha_d}.
    For the lower bound, we recall the integral form of $I_0$, namely for $u\geq 0$,
    \[
    I_0(u) = \int_0^1 e^{u\cos\pi \theta} \, d\theta ,
    \]
    and the classical bound for $t \geq 0$, $\cos t \geq 1 - \frac{t^2}{2}$,
    \begin{align*}
        I_0(u) 
        &\geq \int_0^{\frac{1}{\pi \sqrt{u}}} e^{u\cos\pi \theta} \, d\theta \\
        &\geq  \frac{1}{\pi\sqrt{u}}e^{u(1-\frac{1}{2u})} \\
        &\geq \frac{e^u}{2\pi\sqrt{u}}.
    \end{align*}

Moreover, using the integral representation of $I_0$ and differentiating the map $u \mapsto e^{-u} I_0(u)$, one obtains for $u>0$:

    \[
    \left( e^{-u}I_0(u)\right)' = \int_{0}^{1} \left(\cos\pi\theta-1\right)e^{u\left(\cos\pi\theta-1\right)}\,d\theta < 0.
    \]
\end{proof}

With these two technical lemmas established, we proceed to estimate $P_1$.

\begin{lemma} \label{lemma:MiddleSumOfEstimate}
    For $s\in \R$, $2\leq s \leq \dfrac{\log p}{\log_3 p}$, the following holds\textnormal{:}

    \[\sum_{ \frac{s}{\pi}  < |j| \leq (\log p)^2} \log\left(\frac{1}{d}\sum_{k=0}^{d-1} \exp\left(\frac{2s\cos\left( \frac{2k\pi}{d} \right)}{\pi(2j+1)}\right)\right) 
    = \frac{2s}{\pi}\int_{0}^1  \dfrac{\alpha_d(u)}{u^2} \,du + O(1),\]
\noindent
    where $\alpha_d$ is defined in~\eqref{eq:def_alpha_d} and $\displaystyle\int_{0}^1  \dfrac{\alpha_d(u)}{u^2} \,du \approx 0.49$ when $d=2$, and $\approx 0.25$ when $d>2$.
    
\end{lemma}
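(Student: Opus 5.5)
Each summand is $\alpha_d(u_j)$ with $u_j := \frac{2s}{\pi(2j+1)}$, since the definition~\eqref{eq:def_alpha_d} is exactly the logarithm in the sum. The plan is to compare the sum over $j$ with an integral in the variable $t=j+\tfrac12$ and then substitute $u = \frac{s}{\pi t}$. We have $u_j = \frac{s}{\pi(j+1/2)}$, and for negative $j$ we use that $\alpha_d$ is even (Lemma~\ref{lemma:estimates_alpha_d}\eqref{eq:property5-alphad}). Writing $j=-m$ with $m\ge 1$ gives $|2j+1| = 2(m-\tfrac12)$. So the sum splits into two one-sided sums,
\[
\sum_{s/\pi<j\le (\log p)^2}\alpha_d\!\left(\frac{s}{\pi(j+\frac12)}\right)+\sum_{s/\pi< m\le (\log p)^2}\alpha_d\!\left(\frac{s}{\pi(m-\frac12)}\right),
\]
which together behave like $2\sum_j \alpha_d\bigl(\frac{s}{\pi j}\bigr)$ up to shifts by $\tfrac12$.

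For each sum, the function $F(t)=\alpha_d\bigl(\frac{s}{\pi t}\bigr)$ is decreasing in $t>0$ by Lemma~\ref{lemma:estimates_alpha_d}\eqref{eq:property5-alphad}. Hence a sum of $F(j\pm\tfrac12)$ over an integer range differs from $\int F(t)\,dt$ over the corresponding range by at most $O(\max F)$. On the range $t\ge s/\pi$ the argument $u$ is at most $1$, so by~\eqref{eq:property1-alphad} we have $\max F\le 1$. The boundary mismatch of size $O(1)$ in $t$ (shifts by $\tfrac12$, integer parts of $s/\pi$) costs $O(1)$ for the same reason.

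The main integral is then
\[
2\int_{s/\pi}^{(\log p)^2}\alpha_d\!\left(\frac{s}{\pi t}\right)dt=\frac{2s}{\pi}\int_{s/(\pi(\log p)^2)}^{1}\frac{\alpha_d(u)}{u^2}\,du
\]
via the substitution $t=\frac{s}{\pi u}$, $dt=-\frac{s}{\pi u^2}du$. To extend the lower limit to $0$, we use $\alpha_d(u)\ll u^2$ for $|u|\le1$, which follows from~\eqref{eq:property3-alphad} and continuity. The missing piece is then $\frac{2s}{\pi}\int_0^{s/(\pi(\log p)^2)}O(1)\,du\ll \frac{s^2}{(\log p)^2}$, which is $O(1)$ because $s\le \log p/\log_3 p$.

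The one point needing care is the monotone sum-versus-integral comparison, uniformly in $d$ and $s$. Monotonicity gives total error at most the value at the left endpoint, which is bounded by $1$, so there is no real obstacle beyond bookkeeping the $\pm\tfrac12$ shifts.

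Finally, the numerical remarks come from evaluating $\int_0^1\alpha_d(u)u^{-2}\,du$. For $d=2$, $\alpha_2=\log\cosh$, giving $\approx0.49$. For $d>2$, the small-$u$ behaviour $\alpha_d(u)\approx u^2/4$ gives $\approx 0.25$, with only negligible dependence on $d$ through the tail of the Bessel expansion~\eqref{eq:serieAvecCoeffBessel-unityRoot}.
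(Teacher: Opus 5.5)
Your proposal is correct and follows essentially the same route as the paper. Both arguments fold the negative $j$ onto the positive ones using the evenness of $\alpha_d$, compare the monotone sum with an integral at $O(1)$ cost, and substitute $u=s/(\pi t)$; both then discard the range $u\le s/(\pi(\log p)^2)$ via $\alpha_d(u)\ll u^2$, at a cost $\ll s^2/(\log p)^2=o(1)$. One small correction to your bookkeeping: for the terms $F(m-\tfrac12)$ near the left endpoint the argument $u$ can reach nearly $2$ rather than at most $1$, which is harmless since $\alpha_d(u)\le |u|$ keeps these boundary contributions $O(1)$.
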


\begin{proof}[Proof of Lemma~\ref{lemma:MiddleSumOfEstimate}]

Let for $j\in\Z$, $x_j := \dfrac{2s}{\pi(2j+1)}$, by using \eqref{eq:property3-alphad} of Lemma~\ref{lemma:estimates_alpha_d}, we have
 \[
 \int_{(\log p)^2}^{\infty} \alpha_d\left( \dfrac{2s}{\pi (2t+1)} \right) \,dt = \int_{0}^{s/\pi ((\log p)^2+1/2)} \dfrac{\alpha_d(u)}{u^2} \,du  = O\!\left( \frac{s}{(\log p)^2}  \right) = o(1).
 \]
Moreover, by the monotonicity of $\alpha_d$ on $(0,\infty)$ and the upper bound in~\eqref{eq:property1-alphad}, we get the following inequalities:

\[
\int_{ \frac{s}{\pi} }^{(\log p)^2+1} \alpha_d\left( \dfrac{2s}{\pi (2t+1)}\right)dt
\leq \sum_{j = \left \lfloor \frac{s}{\pi} \right \rfloor+1}^{(\log p)^2} \alpha_d\left(x_j \right) 
\leq \alpha_d(1)+ \int_{ \frac{s}{\pi} }^{(\log p)^2} \alpha_d\left( \dfrac{2s}{\pi (2t+1)}\right)dt.
\]
\noindent
We remark that $x_{1-j} = x_j$, combining that with the comparison between the sum 
and the integral gives us

\begin{align*}
    \sum_{ \frac{s}{\pi}  < |j| \leq (\log p)^2} \alpha_d\left( x_j \right) 
    &= 2\sum_{ \frac{s}{\pi}  < j \leq (\log p)^2} \alpha_d\left( x_j \right) + O(1) \\
    &= 2\int_{\frac{s}{\pi}}^{(\log p)^2} \alpha_d\left( \dfrac{2s}{\pi (2t+1)}\right) \,dt + O(1) \\
    &= \frac{2s}{\pi}\int_0^{1 - \pi/(2s+\pi)} \dfrac{\alpha_d(u)}{u^2} \,du + O(1) \\
    &= \frac{2s}{\pi}\int_0^{1} \dfrac{\alpha_d(u)}{u^2} \,du -  \frac{2s}{\pi}\int_{2s/(2s+\pi) }^{1} O(1) \,du + O(1) \\
    &= \frac{2s}{\pi}\int_0^{1} \dfrac{\alpha_d(u)}{u^2} \,du + O(1).
\end{align*}
    
\end{proof}

\begin{lemma} \label{lemma:HeadailOfEstimate}
    For $s\in \R, 2\leq s \leq \dfrac{\log p}{\log_3 p}$, the following holds for even $d$\textnormal{:}

    \[\sum_{ |j| \leq s/\pi } \log\left(\frac{1}{d}\sum_{k=0}^{d-1} \exp\left(\frac{2s\cos\left( \frac{2k\pi}{d} \right)}{\pi(2j+1)}\right)\right) = \frac{2s}{\pi}\log s + \hat{C}_d s+ O(\log s),\]
    
    \noindent
    with $\hat{C}_d = \dfrac{2}{\pi}\left(\gamma + \log\dfrac{4}{\pi}\right) + \dfrac{2}{\pi}\displaystyle\int_1^\infty \dfrac{\alpha_d(u) - u}{u^2}\,du $.

\end{lemma}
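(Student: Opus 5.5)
The plan is to write each summand as $\alpha_d(x_j)$ with $x_j=\frac{2s}{\pi(2j+1)}$, and to split $\alpha_d(u)=|u|+(\alpha_d(u)-|u|)$. Since $d$ is even, $\alpha_d$ is even by item~\eqref{eq:property5-alphad} of Lemma~\ref{lemma:estimates_alpha_d}, so $\alpha_d(x_j)=\alpha_d(|x_j|)$. The range $|j|\le s/\pi$ corresponds, up to $O(1)$ boundary terms, to $|x_j|\ge 1$. Recall also $x_{-1-j}=-x_j$, so the terms pair up as $j\geq 0$ and $-1-j$. Each boundary term is $O(1)$ because $\alpha_d(u)\le |u|$ and $|x_j|\ll 1$ there.

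The main term is $\sum_{|j|\le s/\pi}|x_j| = \frac{2s}{\pi}\bigl(H_{n}(1/2)+H_{n'}(1/2)\bigr)\cdot\frac12$, with $n,n'\approx s/\pi$. The two half-ranges $j\ge0$ and $j<0$ each give $\frac{s}{\pi}\sum_{0\le j\le s/\pi}\frac{1}{j+1/2}$. By Lemma~\ref{lemma:HarmonicEstimation}, each is $\frac{s}{\pi}\bigl(\log(s/\pi)+\gamma+2\log 2+O(1/s)\bigr)$. The total is therefore
\[
\frac{2s}{\pi}\Bigl(\log s+\gamma+\log\frac{4}{\pi}\Bigr)+O(1),
\]
which produces the $\frac{2s}{\pi}\log s$ term and the first part of $\hat C_d$. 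Mismatches of one index at the edge of the range cost $O(1)$, since each term there is of size $O(1)$.

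For the correction term $\sum_{|j|\le s/\pi}\beta(|x_j|)$ with $\beta(u)=\alpha_d(u)-u\le 0$, compare with an integral exactly as in the proof of Lemma~\ref{lemma:MiddleSumOfEstimate}. Item~\eqref{eq:property6-alphad} makes $\beta$ monotone (non-increasing), and $|x_j|$ is monotone in $j$ on each half-range, so sum/integral comparison applies with error bounded by the total variation of $\beta$ over the range. The substitution $u=\frac{2s}{\pi(2t+1)}$ turns $2\int_0^{s/\pi}\beta\bigl(\frac{2s}{\pi(2t+1)}\bigr)dt$ into $\frac{2s}{\pi}\int_{1}^{2s/\pi}\frac{\beta(u)}{u^2}du$, up to $O(1)$.

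The main obstacle is bounding the variation error and the tail $\frac{2s}{\pi}\int_{2s/\pi}^\infty\beta(u)u^{-2}du$. Lemma~\ref{lemma:boundsI0} together with item~\eqref{eq:property1-alphad} gives $\alpha_d(u)\ge\log I_0(u)\ge u-\frac12\log u-\log(2\pi)$, hence $|\beta(u)|\ll\log(u+2)$ for $u\geq 1$. Two consequences follow:
\begin{itemize}
\item[(i)] the integral $\int_1^\infty\beta(u)u^{-2}du$ converges, and the tail is $\ll s\cdot\frac{\log s}{s}=O(\log s)$;
\item[(ii)] the total variation of the monotone function $\beta$ on $[1,2s/\pi]$ is $|\beta(2s/\pi)-\beta(1)|\ll\log s$, so the sum/integral comparison error is $O(\log s)$.
\end{itemize}
The largest term, $j=0$, contributes $\beta(2s/\pi)=O(\log s)$ by itself, which is consistent with this bound.

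Collecting the main term and the correction term gives
\[
\frac{2s}{\pi}\log s+\frac{2}{\pi}\Bigl(\gamma+\log\frac4\pi\Bigr)s+\frac{2s}{\pi}\int_1^\infty\frac{\alpha_d(u)-u}{u^2}\,du+O(\log s),
\]
which is the claimed formula.
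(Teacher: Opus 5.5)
Your proposal is correct and follows essentially the same route as the paper: you split $\alpha_d(x_j)=|x_j|+\bigl(\alpha_d(|x_j|)-|x_j|\bigr)$, evaluate the harmonic main term with Lemma~\ref{lemma:HarmonicEstimation}, and handle the monotone correction by a sum--integral comparison together with the bound $|\alpha_d(u)-u|\ll\log u$ coming from Lemma~\ref{lemma:boundsI0}. Bounding the tail directly via $\log I_0(u)\le\alpha_d(u)\le u$ slightly streamlines the paper's detour through $E(u)$, and your symmetry $x_{-1-j}=-x_j$ and the non-increasing monotonicity of $\alpha_d(u)-u$ are the correct versions of the paper's typos ($x_{1-j}=x_j$ and ``non-decreasing'').
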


\begin{proof}[Proof of Lemma~\ref{lemma:HeadailOfEstimate}]

Let $d\geq 2$ an even integer. We recall that $x_j = \dfrac{2s}{\pi(2j+1)}$ for $j\in\Z$. 
Lemma~\ref{lemma:estimates_alpha_d} asserts that the map $u \mapsto \alpha_d(u)-u$ is non-decreasing, and Lemma~\ref{lemma:boundsI0} ensures the logarithmic bound $|\alpha_d(u)-u| = O(\log|u|)$. 
Consequently,

    \begin{align} \label{eq:lemma5.8-part1}
        \begin{split}
            \sum_{ 0 \leq |j| \leq \frac{s}{\pi}} \alpha_d(x_j) 
        &= 2\sum_{ j=0}^{\frac{s}{\pi}} \alpha_d(x_j) + O(1) \\
        &= 2\sum_{ j=0}^{\frac{s}{\pi}} \dfrac{2s}{\pi(2j+1)} + 2\int_{0}^{\frac{s}{\pi} - \frac{1}{2}} \alpha_d\left( \dfrac{2s}{\pi (2t+1)} \right) - \dfrac{2s}{\pi (2t+1)}  \,dt + O(\log s) \\
        &= \frac{2s}{\pi}\left(\sum_{ j=0}^{\frac{s}{\pi}} \dfrac{1}{j+\frac{1}{2}} +\int_{1}^{\infty} \frac{\alpha_d(u) - u}{u^2}\,du - \int_{\frac{2s}{\pi} }^{\infty} \dfrac{\alpha_d(u) - u}{u^2}\,du \right) + O(\log s). 
        \end{split}
    \end{align}

\noindent
    Using Lemma~\ref{lemma:HarmonicEstimation}, we have 
    \begin{equation} \label{eq:lemma5.8-harmonicSeries}
        \sum_{ j=0}^{\frac{s}{\pi}} \dfrac{1}{j + \frac{1}{2}} = \log{s} + \gamma + 2\log{2} - \log{\pi} + O\!\left(\frac{1}{s}\right).
    \end{equation}
\noindent
    For $u\geq 0$, let
\[
E(u):=
\frac{1}{d}\sum_{k=0}^{d-1}
\exp\!\left(u \cos\!\left(\frac{2k\pi}{d}\right)\right)
- I_0(u),
\]
and $\gamma_u:\theta\mapsto \exp\!\left(u \cos\!\left(2\pi\theta\right)\right)$. $\gamma_u$ is decreasing on $[0,\tfrac12]$ and increasing on $[\tfrac12,1]$, so we have

\begin{align*}
    E(u) &= \sum_{k=0}^{\frac{d}{2}-1}\int_{k/d}^{(k+1)/d} \left(\gamma_u(\tfrac{k+1}{d}) - \gamma_u(\theta) \right)\,d\theta 
    + \sum_{k=\frac{d}{2}+1}^{d-1}\int_{k/d}^{(k+1)/d} \left(\gamma_u(\tfrac{k}{d}) - \gamma_u(\theta) \right)\,d\theta \\
    &\;\;+ \frac{\gamma_u(0)}{d} - \int_{1/2}^{1/2 + 1/d}  \gamma_u(\theta)\,d\theta \\
    &\leq \frac{e^u}{d},
\end{align*}
since the first, second and fourth terms are nonpositive, hence they can be bounded above by $0$. 
It is easy to see that $E(u)\geq 0$ with~\eqref{eq:serieAvecCoeffBessel-unityRoot}.
Next, we use the identity

\begin{equation} \label{eq:lemma5.8-identityAlpha_d}
\alpha_d(u)-u = \log I_0(u)-u + \log\!\left(1 + \frac{E(u)}{I_0(u)}\right)
\end{equation}

to derive (by using Lemma~\ref{lemma:boundsI0})

\begin{align}  \label{eq:lemma5.8-errorTerm}
\begin{split}
    \left| \int_{\frac{2s}{\pi}}^{\infty} \dfrac{\alpha_d(u)-u}{u^2} \,du \right|
&\leq 
\int_{\frac{2s}{\pi}}^{\infty}
O\!\left(\frac{\log u}{u^2}\right)\,du + \int_{\frac{2s}{\pi}}^{\infty}
O\!\left(\frac{ \log\!\left(1+\frac{2\pi\sqrt{u}}{d}\right)}{u^2}\right)\,du   \\ 
&=O\!\left(\frac{\log s}{s}\right).
\end{split}
\end{align}

    Combining equations~\eqref{eq:lemma5.8-part1}, \eqref{eq:lemma5.8-harmonicSeries} and~\eqref{eq:lemma5.8-errorTerm} yields

    \begin{align*}
        \sum_{ 0 \leq |j| \leq \frac{s}{\pi}} \alpha_d\left(  \dfrac{2s}{\pi(2j+1)} \right) 
       &= \frac{2s}{\pi}\log s  +\hat{C}_d s  + O(\log s),
    \end{align*}
and $\hat{C}_d =   \dfrac{2}{\pi} \left(\gamma + \log{\dfrac{4}{\pi}}+ \displaystyle\int_{1}^{\infty} \dfrac{\alpha_d(u) - u}{u^2}\,du \right)$.

\end{proof}

\begin{proof}[Proof of Proposition~\ref{prop:estimateLogEsperance}]
    The proposition follows from  Lemmas~\ref{lemma:TailOfEstimate}, \ref{lemma:MiddleSumOfEstimate} and \ref{lemma:HeadailOfEstimate}.
\end{proof}

\begin{lemma} \label{lemma:boundsOnC_d}
 The sequence $(C_{2d})_{d\ge 1}$ (defined in Proposition~\ref{prop:estimateLogEsperance}) is decreasing. Furthermore,
\[
C_{2} = 0.1029\cdots,
\]
and
\[
\lim_{d\to\infty} C_{2d} = \frac{2}{\pi}\left( \gamma + \log\frac{4}{\pi} + \int_0^1 \frac{\log I_0(u)}{u^2} \,du + \int_1^\infty \frac{\log I_0(u) - u}{u^2} \,du   \right) = -0.1722\cdots.
\]
\end{lemma}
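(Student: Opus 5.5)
Since $\hat C_{2d}$ depends on $d$ only through $\int_1^\infty (\alpha_{2d}(u)-u)u^{-2}\,du$, the constant $C_{2d}=\frac{2}{\pi}\bigl(\gamma+\log\frac{4}{\pi}+\int_0^1 \alpha_{2d}(u)u^{-2}\,du+\int_1^\infty(\alpha_{2d}(u)-u)u^{-2}\,du\bigr)$ depends on $d$ only through the single integral $\int_0^\infty$ of the appropriately compensated function $\alpha_{2d}$. The plan is therefore to prove pointwise monotonicity in $d$ of $\alpha_{2d}(u)$, namely $\alpha_{2d+2}(u)\le \alpha_{2d}(u)$ for all $u\ge 0$, and then integrate. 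By \eqref{eq:serieAvecCoeffBessel-unityRoot} we have
\[
e^{\alpha_{2d}(u)} = I_0(u) + 2\sum_{n\ge 1} I_{2dn}(u).
\]
For $u\ge 0$ each $I_m(u)\ge 0$ and $m\mapsto I_m(u)$ is decreasing (as noted after \eqref{eq:serieAvecCoeffBessel-unityRoot}). Since $2(d+1)n\ge 2dn$, we get $I_{2(d+1)n}(u)\le I_{2dn}(u)$ termwise. Hence $\alpha_{2d+2}\le\alpha_{2d}$ on $[0,\infty)$, and strictly so for $u>0$. Integrating against $u^{-2}$ on $[0,1]$ and on $[1,\infty)$ gives strict decrease of $C_{2d}$. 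The integrals converge by Lemma~\ref{lemma:estimates_alpha_d}\eqref{eq:property3-alphad} near $0$ and by the bound $|\alpha_d(u)-u|=O(\log u)$ from Lemma~\ref{lemma:boundsI0} at infinity.

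For the limit, the same expansion shows $0\le \alpha_{2d}(u)-\log I_0(u)\le \log\bigl(1+E(u)/I_0(u)\bigr)$, where $E(u)=2\sum_{n\ge1}I_{2dn}(u)\to 0$ as $d\to\infty$ for each fixed $u$. This is immediate from the series \eqref{def:I0}, since $I_m(u)\le (u/2)^m e^{u^2/4}/m!$. For dominated convergence:
\begin{itemize}
\item On $[0,1]$, use $0\le\alpha_{2d}(u)\le\alpha_2(u)=\log\cosh u$, which follows from the monotonicity just proved; this is integrable against $u^{-2}$.
\item On $[1,\infty)$, use $|\alpha_{2d}(u)-u|\le |\log I_0(u)-u|+\log\bigl(1+2\pi\sqrt u\,\bigr)$, as in \eqref{eq:lemma5.8-errorTerm}. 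This is $O(\log u)$ uniformly in $d$, hence integrable against $u^{-2}$.
\end{itemize}
Therefore $C_{2d}\to \frac{2}{\pi}\bigl(\gamma+\log\frac4\pi+\int_0^1\log I_0(u)\,u^{-2}\,du+\int_1^\infty(\log I_0(u)-u)u^{-2}\,du\bigr)$, as claimed.

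The numerical values are the remaining part. For $d=1$ (order $2$), $\alpha_2(u)=\log\cosh u$, so $C_2$ is an explicit combination of $\gamma$, $\log(4/\pi)$ and the two integrals $\int_0^1 \log\cosh u\,u^{-2}\,du$ and $\int_1^\infty(\log\cosh u-u)u^{-2}\,du$. The second integrand equals $(\log(1+e^{-2u})-\log 2)u^{-2}$, which decays fast enough to allow rigorous quadrature with explicit tail bounds. The limiting constant similarly requires evaluating integrals of $\log I_0$. The main obstacle I expect is certifying the digits of $-0.1722\cdots$, since $\log I_0(u)-u\sim -\tfrac12\log(2\pi u)$ decays only like $\log u/u^2$ after division by $u^2$. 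I would split at a large $U$, integrate numerically on $[1,U]$, and on $[U,\infty)$ use the asymptotic expansion $\log I_0(u)=u-\tfrac12\log(2\pi u)+\tfrac1{8u}+O(u^{-2})$, integrating the main terms in closed form to control the tail.
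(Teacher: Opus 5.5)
Your monotonicity argument is the paper's argument. Both expand $e^{\alpha_{2d}(u)}=I_0(u)+2\sum_{n\ge1}I_{2dn}(u)$, compare termwise using the decrease of $m\mapsto I_m(u)$, and integrate $\alpha_{2d+2}\le\alpha_{2d}$ against $u^{-2}$. If you want that step self-contained, note that the paper's remark that this decrease is ``straightforward from the series'' is not literally termwise. The recurrence $I_{m-1}(u)-I_{m+1}(u)=\frac{2m}{u}I_m(u)>0$ gives strict decrease along even indices, which is all you use.

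For the limit your route differs. You use pointwise convergence via $I_m(u)\le (u/2)^m e^{u^2/4}/m!$ and then dominated convergence. The paper instead proves explicit rates. On $[0,1]$ it uses Olver's large-order asymptotic for $I_{nd}$, with error $O(1/d!)$. On $[1,\infty)$ it uses the Riemann-sum bound $E(u)\le e^u/d$ with $I_0(u)\ge e^u/(2\pi\sqrt u)$ and $\log(1+x)\le x$, with error $O(1/d)$. So the paper gets $C_{2d}=C_\infty+O(1/d)$. Your version avoids Olver entirely, since your series bound already gives an $O(1/(2d)!)$ error on $[0,1]$. However, it gives no rate on $[1,\infty)$ unless you also import $E(u)\le e^u/d$. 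Your dominating function there is also more complicated than needed: $\log I_0(u)-u\le\alpha_{2d}(u)-u\le 0$ suffices.

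The paper gives no justification at all for the numerical values, so your certified-quadrature plan goes beyond it. But carrying it out will \emph{not} confirm $C_2=0.1029\cdots$, because that value is inconsistent with the definition of $C_2$. The relevant quantities are:
\begin{itemize}
\item $\gamma+\log\frac4\pi\approx0.81878$;
\item $\int_1^\infty(\log\cosh u-u)u^{-2}\,du=-\log2+\int_1^\infty\log(1+e^{-2u})u^{-2}\,du\approx-0.65711$;
\item $\int_0^1\log\cosh u\,u^{-2}\,du\approx0.47589$.
\end{itemize}
The definition therefore gives $C_2\approx\frac{2}{\pi}(0.81878+0.47589-0.65711)\approx0.4059$. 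The printed $0.1029$ is exactly $\hat C_2=\frac{2}{\pi}(0.81878-0.65711)$. It omits the contribution $\frac{2}{\pi}\int_0^1\alpha_2(u)u^{-2}\,du\approx0.303$ that enters $C_2$ in Proposition~\ref{prop:estimateLogEsperance}.

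The limiting value, by contrast, does include the corresponding term. With $\int_0^1\log I_0(u)u^{-2}\,du\approx0.2451$ and $\int_1^\infty(\log I_0(u)-u)u^{-2}\,du\approx-1.3345$, one gets $-0.1722\cdots$ as stated. So the analytic parts of your proof are fine, but the claim $C_2=0.1029\cdots$ cannot be certified as written. The correct statement is $\hat C_2=0.1029\ldots$, equivalently $C_2=0.4059\ldots$. This is still consistent with the monotonicity you prove; for instance $\alpha_4(u)=2\log\cosh(u/2)$ gives $C_4\approx-0.035$.
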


\begin{proof}[Proof of Lemma~\ref{lemma:boundsOnC_d}]
   
For $u\geq 0$,  
\[
    \alpha_d(u) = \frac{1}{d}\sum_{k=0}^{d-1} \exp\left(   u\cos\left( \frac{2k\pi}{d}\right) \right)= I_0(u) + 2 \sum_{n\geq 1} I_{nd}(u).
\]

\noindent
For even $d$, the function $I_{nd}$ is even and nonnegative for all $n\in\mathbb{N}$.  
Moreover, the map $d \mapsto I_{nd}(u)$ is decreasing and tends to $0$ as $d\to\infty$.  
This implies that $d \mapsto \alpha_d$ is decreasing, and consequently that the sequence $d \mapsto C_{2d}$ is also decreasing.

\medskip

Now, let us assume $d$ is large. Then, for $n\geq 1$ and $0\leq u\leq \sqrt{d}$, we can use the 
asymptotic estimate of $I_{nd}(u)$ for large orders (see \cite{Olver1974}, p. 374):

\begin{equation} \label{eq:EstimateIn}
  I_{nd}(u) = \dfrac{u^{nd}}{2^{nd} (nd)!} \left( 1 + O\!\left(\frac{u^2}{nd}\right) \right). 
\end{equation}

\noindent
From equation~\eqref{eq:EstimateIn}, for $0 \leq u \leq 1$ we have 
\[
\alpha_d(u) = \log \left(I_0(u) +  O\!\left(\frac{u^d}{d!}\right)\right).
\]
Thus, since $1\leq I_0(u) \leq e$ for $u$ in this range,
\begin{align}\label{eq:headIntI0-estimate}
\begin{split}
    \int_{0}^1  \dfrac{\alpha_d(u)}{u^2} \,du &= \int_{0}^1  \dfrac{\log I_0(u)}{u^2} \,du + \int_{0}^1  O\!\left(\frac{u^{d-2}}{d!} \right) \,du\\
    &=   \int_{0}^1  \dfrac{\log I_0(u)}{u^2} \,du +  O\!\left(\frac{1}{d!} \right).
\end{split}
\end{align}

For $u\geq 1$, from equations~\eqref{eq:lemma5.8-identityAlpha_d},~\eqref{eq:lemma5.8-errorTerm} and the bound $\ln (1+x) \leq x$ for $x\geq0$, we get
\begin{align} \label{eq:middleIntI0-estimate}
\begin{split}
    \int_{1}^{\infty} \dfrac{\alpha_d(u)-u}{u^2} \,du 
    &= \int_{1}^{\infty} \dfrac{\log I_0(u)-u}{u^2}\,du +
    \int_{1}^{\infty} O\!\left(\frac{ \frac{2\pi\sqrt{u}}{d}}{u^2}\right)\,du \\
    &= \int_{1}^{\infty} \dfrac{\log I_0(u)-u}{u^2}\,du +
     O\!\left(\frac{ 1}{d} \right) \\
\end{split}
\end{align}

Combining~\eqref{eq:headIntI0-estimate} and \eqref{eq:middleIntI0-estimate} gives us the desired result.

\end{proof}

\begin{remark} 
    The estimate for small odd moduli is insufficient in this case: unlike the even-order setting, these computations do not yield a lower bound matching the upper bound. 
To tackle this problem, we adopt a different approach in Section~\ref{section6:proofoddLowerBound}. Although the desired lower bound could in principle be obtained by taking the absolute value instead of the real part of the sum, this leads to substantially more delicate estimates, beyond the scope of the present methods. 
We also note that it remains an interesting open problem to determine whether $(C_{2d+1})$ is increasing in $d$; numerical experiments are consistent with this conjecture (see the plots in the appendix).

\end{remark}

\medskip

Now that we have computed the expected value of the Laplace transform of the random model, what is left to do is using the saddle-point method to get the desired 
lower bound for $\Phi_\chi(V)$. We have the following estimate for $s\leq \frac{\log p}{100 (\log_2 p)^2}$:

\begin{equation}  \label{eq:SaddlepointIntegral}
    \int_{-\infty}^{\infty} e^{st}\tilde{\Phi}_\chi(t)\,dt   = \exp{\!\left( \frac{2}{\pi}s\log{s} + C_d s + O(\log s) \right)},
\end{equation} 
where $C_d$ is defined in Proposition~\ref{prop:estimateLogEsperance}.

\medskip

Now, if we take $V \leq \frac{2}{\pi} (\log_2 p - 2 \log_3 p)$ a large real number, we shall chose $s$ by using the saddle-point method, in other words the $s$ such that
\begin{equation*}
\label{saddlepoint}
\left(\frac{2}{\pi}s\log{s} + C_d s - Vs \right)' = 0 \Leftrightarrow s = \exp{\!\left(\frac{\pi}{2}(V-C_d) -1\right)}.
\end{equation*}

\noindent
Let $\varepsilon= \frac{\log s}{\sqrt{s}}$ and $S = se^\varepsilon$, it follows from~\eqref{eq:SaddlepointIntegral} that

\begin{align*}
\int_{V+ \frac{2\varepsilon}{\pi}}^{\infty} e^{st}\tilde{\Phi}_\chi(t)\,dt 
&\leq \exp{\!\left(s(1-e^\varepsilon)\left(V+\frac{2\varepsilon}{\pi}\right)\right)}  
 \int_{V+ \frac{2\varepsilon}{\pi}}^{\infty} e^{St}\tilde{\Phi}_\chi(t)\,dt   \\
&\leq \exp{\!\left(s(1-e^\varepsilon)\left(V+\frac{2\varepsilon}{\pi}\right)+ \frac{2}{\pi}s e^\varepsilon \log (s e^\varepsilon) + C_d se^\varepsilon + O(\log s) \right)  }  \\
&\leq \exp{\!\left( \frac{2}{\pi}(1+\varepsilon - e^\varepsilon)s + O(\log s) \right)  }  \int_{-\infty}^{\infty} e^{st}\tilde{\Phi}_\chi(t)\,dt.
\end{align*}

\noindent
Similarly, we get (if we now put $S = se^{-\varepsilon}$)
\begin{align*}
\int_{-\infty}^{V- \frac{2\varepsilon}{\pi}} e^{st}\tilde{\Phi}_\chi(t)\,dt 
&\leq \exp{\!\left(s(1-e^{-\varepsilon})\left(V-\frac{2\varepsilon}{\pi}\right)\right)}  
 \int_{-\infty}^{V- \frac{2\varepsilon}{\pi}}e^{St}\tilde{\Phi}_\chi(t)\,dt   \\
&\leq \exp{\!\left(s(1-e^{-\varepsilon})\left(V - \frac{2\varepsilon}{\pi}\right)+ \frac{2}{\pi}s e^{-\varepsilon} \log (s e^{-\varepsilon}) +
 C_d se^{-\varepsilon} + O(\log s) \right)  }  \\
&\leq \exp{\!\left( \frac{2}{\pi}(1-\varepsilon - e^{-\varepsilon})s + O(\log s) \right)  }  \int_{-\infty}^{\infty} e^{st}\tilde{\Phi}_\chi(t)\,dt.
\end{align*}

\medskip
Combining the previous two inequalities gives us the following estimate:

\begin{align*}
    \int_{V- \frac{2\varepsilon}{\pi}}^{V+ \frac{2\varepsilon}{\pi}} e^{st}\tilde{\Phi}_\chi(t)\,dt
    &= \left(1 - 2e^{- \frac{(\log s)^2}{2} + O(\log s)} \right) \exp{\!\left( \frac{2}{\pi}s\log{s} + C_d s + O(\log s)   \right)} \\
    &= \exp{\!\left( \frac{2}{\pi}s\log{s} + C_d s + O(\log s)   \right)}.
\end{align*}

\noindent
Since $\tilde{\Phi}_\chi(t)$ is a non increasing function of $t$, we have the following estimate:

\begin{align*}
    \tilde{\Phi}_\chi\left(V-\frac{2\varepsilon}{\pi}\right) &\le \exp \left(-\frac{2}{\pi}\exp\left(\frac{\pi}{2}(V-C_d) -1 \right)\left(1 + O\!\left( e^{-\frac{\pi}{4}V} \right) \right) \right) \\
    \tilde{\Phi}_\chi\left(V+\frac{2\varepsilon}{\pi}\right) &\ge \exp \left(-\frac{2}{\pi}\exp\left(\frac{\pi}{2}(V-C_d) -1 \right)\left(1 + O\!\left( e^{-\frac{\pi}{4}V} \right) \right) \right).
\end{align*}

\medskip
We have the following bound:
\[\max_{x\in[0,1]} \left|\frac{f_\chi(e_p(K+x))}{\sqrt{p}}\right|  \geq \left|\frac{f_\chi(e_p(K+\frac{1}{2})}{\sqrt{p}}\right|,
\]
\noindent
and since in $\tilde{\Phi}_\chi(V)$ we are counting values of $K\in \F_p \setminus \mathcal{E}_p$ such that the modulus of our function exceed $V$, with the definition of $\Phi_\chi(V)$,
we see that $\Phi_\chi(V) \geq \tilde{\Phi}_\chi(V) $, so that gives us

\begin{equation} \label{eq:EstimationEnUnDemi}
\exp\left(-C_d^-\exp{\!\left(\frac{\pi}{2}V \right)}\left(1+O(e^{-\frac{\pi}{4} V})\right)\right) \le \Phi_\chi(V),
\end{equation}
\noindent
with $C_d^- := \frac{2}{\pi}\exp( -\frac{\pi}{2}C_d -1) $.

\medskip
Taking $d=2$ (hence studying Fekete polynomials) and noticing that $\lvert \mathcal{E}_p\rvert~\ll~p^{3/4}$ 
gives us Theorem~\ref{theorem:distribution1/2CharacterSum}.

\medskip

\section{Proof of the lower bound for the odd character case}
\label{section6:proofoddLowerBound}
\medskip

If we attempt to derive a lower bound for odd-order characters using the same method 
as in the even-order case, a discrepancy arises: one obtains instead a lower bound of the form

\[
\exp\left(-C\exp\left(\frac{\pi}{1+\cos\frac{\pi}{d}}V  \right)    \right).
\]
Thus, the lower bound would not be optimal for $d\ll V^{1/2+\varepsilon}, \, \varepsilon>0$, because as we will see in this section, the correct order of magnitude is the one of the upper bound.
In line with the strategy employed in Section~\ref{section3:proofUpperBound}, we will show that for any fixed integer $n$, there exists many $K\in\F_p$ such that the 
$n$-tuple $\left(\chi(K), \chi(K+1),\ldots,\chi(K+n) \right)$ realizes any given fixed pattern. 
The following proposition will quantify and generalize this idea.

\begin{prop} Let $\chi \!\pmod p$ be a character of order $d$. Uniformly for $n\leq p$,   $m_1,\ldots,m_n\in~\F_p$ distinct values and $(a_j)\in(\U_d)^{n}$,
    we have
\label{prop:WeilFixValues}
\[ \frac{1}{p} \Big| \big\{ K \in \F_p :\ \forall j \le n, \; \chi(K+m_j)=a_j \big\} \Big| = \dfrac{1}{d^{n}} + O\!\left(\frac{n}{\sqrt{p}}\right).
\]
\end{prop}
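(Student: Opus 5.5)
The plan is to detect the condition $\chi(K+m_j)=a_j$ by orthogonality of characters of the cyclic group $\U_d$, and then control every non-trivial term with Weil's bound (Lemma~\ref{WeilBound}). For $y\in\F_p^\times$, since $\chi(y)\in\U_d$ and $a\in\U_d$, we have
\[
\mathds{1}\bigl(\chi(y)=a\bigr)=\frac{1}{d}\sum_{r=0}^{d-1}\bigl(\overline{a}\,\chi(y)\bigr)^r .
\]

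First discard the at most $n$ values of $K$ with $K+m_j\equiv 0$ for some $j$. There $\chi(K+m_j)=0\ne a_j$, so these $K$ are not counted on the left. They do contribute a bounded amount to the expanded sum below, so dropping or reinstating them costs at most $O(n/p)$, which is $O(n/\sqrt p)$.

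For the remaining $K$, the indicator of the event is the product over $j$ of the expression above. Expanding this product gives
\[
\frac{1}{d^n}\sum_{(r_1,\dots,r_n)\in\{0,\dots,d-1\}^n}\ \prod_{j=1}^n\overline{a_j}^{\,r_j}\ \chi\Bigl(\prod_{j=1}^n (K+m_j)^{r_j}\Bigr).
\]
Sum this over all $K\in\F_p$ and divide by $p$.
\begin{itemize}
\item The term $r=(0,\dots,0)$ contributes exactly $1/d^n$, up to the $O(n/p)$ correction from the excluded $K$.
\item For any other tuple, consider the polynomial $f_r(X)=\prod_j (X+m_j)^{r_j}$. The $m_j$ are distinct, so its roots $-m_j$ are distinct and each has multiplicity $r_j\in[0,d-1]$. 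At least one $r_j$ is nonzero and hence not divisible by $d$, so $f_r$ cannot be a $d$-th power in $\F_p[X]$, because a $d$-th power has every root multiplicity divisible by $d$.
\item Lemma~\ref{WeilBound} then bounds the character sum by $(n-1)\sqrt p$, since $f_r$ has at most $n$ distinct roots.
\end{itemize}

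The coefficients $\prod_j\overline{a_j}^{r_j}$ all have modulus $1$. The total error from non-trivial tuples is therefore at most
\[
\frac{1}{d^n}\,(d^n-1)\,\frac{(n-1)\sqrt p}{p}\le \frac{n}{\sqrt p}.
\]
The factor $1/d^n$ exactly cancels the number of terms, which is why the bound is uniform in $n\le p$ and in $d$.

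The only delicate point is the claim that $f_r$ is not a perfect $d$-th power. This is where the distinctness of the $m_j$ and the restriction $0\le r_j<d$ are used, and it must be justified via unique factorisation in $\F_p[X]$. The rest is bookkeeping of the $O(n/p)$ terms coming from the zeros of $\chi$.
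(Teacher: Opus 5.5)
Your proposal is correct and follows the paper's proof essentially step for step. You detect each condition $\chi(K+m_j)=a_j$ by orthogonality over $\U_d$, expand the product, take the trivial tuple as the main term $1/d^n$, and bound every other tuple with Lemma~\ref{WeilBound}, the factor $1/d^n$ cancelling the $d^n-1$ error terms. Your version is in fact slightly more careful than the paper's: the paper neither handles the $n$ values of $K$ with $K+m_j\equiv 0$ (where the orthogonality identity gives $1/d$ instead of $0$) nor explicitly justifies that $\prod_j (X+m_j)^{r_j}$ is not a $d$-th power.
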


\begin{proof}
    For a fixed $m\in\F_p$ and $a\in\U_d$, we have
    \[
    \frac{1}{d}\sum_{t=0}^{d-1} \left( \chi(m) \overline{a}\right)^t = \mathds{1}_{\chi(m) = a}.
    \] 
\noindent
    From this identity, we deduce that the cardinality of the set in Proposition~\ref{prop:WeilFixValues} can be written as
    
    \[ 
     \sum_{K=1}^p \dfrac{1}{d^n}\prod_{1\le j \le n} \left( \sum_{t=0}^{d-1} \left( \chi(K+m_j) \overline{a_j}\right)^t \right).  
     \]
\noindent
    We then develop the inner product:

    \[
    \prod_{j=1}^{n} \left( \sum_{t=0}^{d-1} \left( \chi(K+m_j) \overline{a_j}\right)^t \right) 
    = \sum_{0 \leq t_1, \cdots, t_n < d} \prod_{j=1}^{n} \overline{a_j}^{t_j} \chi\left(\textstyle \prod_{j=1}^{n}(K+m_j)^{t_j}   \right).
    \]
\noindent
    From this expression and by using Lemma~\ref{WeilBound}, we deduce

    \begin{align*}
        &\frac{1}{p} \Big| \big\{ K \in \F_p :\ \forall j \le n, \chi(K+m_j)=a_j \big\} \Big| \\
        = \;& \frac{1}{p}\sum_{K=1}^p \dfrac{1}{d^n}\prod_{1\le j \le n} \left( \sum_{t=0}^{d-1} \left( \chi(K+m_j) \overline{a_j}\right)^t \right) \\
        = \;& \dfrac{1}{pd^n}\sum_{0 \leq t_1, \cdots, t_n < d} \prod_{j=1}^{n} \overline{a_j}^{t_j} \sum_{K=1}^p\chi\left(\textstyle \prod_{j=1}^{n}(K+m_j)^{t_j}   \right) \\
        = \;& \dfrac{1}{d^n} + \dfrac{1}{pd^n} \sum_{\substack{0 \leq t_1, \cdots, t_n < d \\ (t_1,\ldots,t_n)\neq (0,\ldots,0)}} \prod_{j=1}^{n} \overline{a_j}^{t_j} \sum_{K=1}^p\chi\left(\textstyle \prod_{j=1}^{n}(K+m_j)^{t_j}   \right) \\
        = \;& \frac{1}{d^n} + O\!\left(\frac{n}{\sqrt{p}} \right).
    \end{align*}
\end{proof}

Now, we just have to show that among the set of $K$ such that $\chi(K+j)$ follows the 
desired pattern, most of them are such that $\sum_{J\leq |j| <p/2} \chi(K-j)c_j$ is 
not too large.

\begin{theorem}
    \label{theorem:OddCharacterLowerBound}
    Let $p$ be a large prime, and $\chi \pmod p$ a Dirichlet character of odd order $d$. 
    For all real number $ 1\leq V \leq \frac{\delta_d}{\pi}(\log_2 p - 2\log_3 p) - 7\sqrt{\log d}$ we have

    \begin{equation*}
        \exp\left( - \tilde{C}_d^- \exp \left(\frac{\pi}{\delta_d}V  \right)  \left(  1 + O(e^{-\pi V/(2\delta_d)}) \right)   \right) \leq \Phi_\chi(V),
    \end{equation*}
    where $\delta_d = 2\cos\frac{\pi}{2d}$ and 
    \begin{equation} \label{eq:valCd-smallOdd}
    \tilde{C}_d^- = \frac{\log d}{2}\exp\left(\frac{7\pi}{\delta_d}\sqrt{\log d} - \gamma\right),
    \end{equation}
     with $\gamma$ the Euler–Mascheroni constant.
    \end{theorem}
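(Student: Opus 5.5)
We prove the lower bound by a pattern-forcing argument at the midpoint $x=\tfrac12$, in the spirit of Section~\ref{section3:proofUpperBound} but run in reverse. Fix a parameter $J$, to be chosen as a constant multiple of $\exp(\frac{\pi}{\delta_d}V)$ with the constant tuned by $d$. We prescribe the values $\chi(K-j)$ for $|j|\le J$ so that the head of $g_{\chi,K}(\tfrac12)$ is as large as possible in modulus.

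Following the proof of Lemma~\ref{lemma:maxOfSumUnitCircle}, take the direction $\theta$ at angular distance $\delta=\pi/(2d)$ from $\U_d$. Choose $a_j\in\U_d$ closest to $e^{i\theta}$ for $j\ge 0$, and closest to $-e^{i\theta}$ for $j<0$. Then
\[
\Re\Bigl(e^{-i\theta}\sum_{|j|\le J}\frac{a_j}{\pi(j+\frac12)}\Bigr)=\frac{\delta_d}{\pi}\bigl(\log J+\gamma+2\log 2\bigr)+O(1/J)
\]
by Lemma~\ref{lemma:HarmonicEstimation}. This quantity is a lower bound for the modulus of the head of $2|g_{\chi,K}(\tfrac12)|$, i.e.\ of $|f_\chi(e_p(K+\tfrac12))|/\sqrt p$.

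By Proposition~\ref{prop:WeilFixValues}, applied with $n=2J+1$ and shifts $m_j=-j$, the set $\mathcal{A}$ of $K$ realizing this pattern has density $d^{-(2J+1)}+O(J/\sqrt p)$. The main term dominates provided $J\log d\le \tfrac14\log p$. This holds in the stated range: $J\approx \exp(\frac{\pi}{\delta_d}V)\le \log p/(\log_2 p)^2$ up to the factor $\exp(-7\sqrt{\log d})$, and that factor is exactly what absorbs $\log d$.

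Next we must show that for most $K\in\mathcal{A}$ the tail $T_K:=\sum_{J<|j|<p/2}\chi(K-j)c_j$, with $c_j$ as in the proof of Lemma~\ref{lemma:productofArithmeticToProbabilistic}, is at most a small constant $\eta$. This is the main obstacle. A plain Markov bound over all $K$, via Corollary~\ref{coro:MajorationSommeZMN-CasParticulier}, gives an exceptional proportion of about $(8k/J)^k\eta^{-2k}$. This must be beaten against $d^{-(2J+1)}$, which is impossible when $k\le J$.

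We would instead compute the conditional moment
\[
\sum_{K}\mathds 1_{\mathcal A}(K)\,|T_K|^{2k}.
\]
Expand $\mathds 1_{\mathcal A}$ with the orthogonality identity from the proof of Proposition~\ref{prop:WeilFixValues}, and $|T_K|^{2k}$ as in Proposition~\ref{prop:MajorationSommeMN}. This produces character sums of $\prod_{|j|\le J}(K-j)^{t_j}f_{\mathbf j,k}(K)$. Since the tail indices satisfy $|j_h|>J$, these roots are disjoint from the head roots. Hence the polynomial is a perfect $d$-th power only if all $t_j\equiv 0$ and $f_{\mathbf j,k}$ is itself a $d$-th power. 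The diagonal term is therefore $d^{-(2J+1)}$ times the unconditional diagonal bound $(8k/J)^k$. The Weil errors cost $d^{2J+1}\cdot(2J+2k)\sqrt p\,(\log p)^{2k}$ after dividing out, which is acceptable for $k\asymp J$ in our range.

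Taking $k=\lfloor J/(16e)\rfloor$ then shows that at least half of $\mathcal A$ has $|T_K|\le \eta$. For those $K$ we get $\Phi_\chi(V)\ge \tfrac12 d^{-(2J+1)}$, since $|f_\chi(e_p(K+\tfrac12))|/\sqrt p\ge V$ once
\[
\frac{\delta_d}{\pi}(\log J+\gamma+2\log 2)-\eta\ge V.
\]

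Finally, solve for the minimal $J$ of the form $\exp\bigl(\frac{\pi}{\delta_d}V-\gamma+O(1)\bigr)$, choosing $\eta$ small to absorb the constants $2\log 2$ and $O(1/J)$. This gives $\Phi_\chi(V)\ge\exp\bigl(-2J\log d\,(1+O(1/J))\bigr)$, and the error $O(1/J)=O(e^{-\pi V/\delta_d})$ is stronger than the stated one. Matching $2J\log d$ to $\tilde C_d^-\exp(\frac{\pi}{\delta_d}V)$ fixes $\tilde C_d^-$. The extra factor $\exp(\frac{7\pi}{\delta_d}\sqrt{\log d})$ reflects the slack we allow in $J$ so that the Weil error and the moment bound remain valid uniformly in $d$; for this we may enlarge $J$ by that factor and let $\eta$ absorb the overshoot.
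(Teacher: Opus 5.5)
Your argument is correct, but it controls the tail by a different mechanism than the paper. The paper never conditions on the pattern. It applies the unconditional Markov bound of Corollary~\ref{coro:MajorationSommeZMN-CasParticulier} with the large threshold $\lambda=7\sqrt{\log d}$ and $k=\frac{49\log d}{8e}J$, so the exceptional proportion is $(8k/(J\lambda^2))^k=e^{-k}=d^{-\frac{49}{8e}J}=o(d^{-(2J+1)})$. It then subtracts this from the pattern density of Proposition~\ref{prop:WeilFixValues}. The price is that the head must reach $V+\lambda$. That overshoot, not any slack for the Weil error as your last paragraph suggests, is what produces the factor $\exp(\frac{7\pi}{\delta_d}\sqrt{\log d})$ in $\tilde C_d^-$ and the $-7\sqrt{\log d}$ in the range.

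Your conditional moment uses that head and tail roots are disjoint, so the diagonal acquires exactly the factor $d^{-(2J+1)}$. This lets a bounded threshold and $k\asymp J$ suffice. It costs one extra routine expansion (of $\mathds 1_{\mathcal A}$ together with $|T_K|^{2k}$), but buys several things:
\begin{itemize}
\item a better constant, $\frac{\log d}{2}\exp(O(1)/\delta_d-\gamma)\le\tilde C_d^-$, so the theorem follows a fortiori without enlarging $J$;
\item uniformity in $d$, since the range restriction is used only, as you note, to keep $J\log d\ll\log p/(\log_2 p)^2$;
\item $k$ stays inside the hypothesis $k\le M$ of Corollary~\ref{coro:MajorationSommeZMN-CasParticulier}, which the paper's $k=\frac{49\log d}{8e}J>J$ does not satisfy.
\end{itemize}

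Two small fixes are needed. First, with $k=\lfloor J/(16e)\rfloor$ the factor $(8k/(J\eta^2))^k\le(2e\eta^2)^{-k}$ is small only when $\eta^2>1/(2e)$. So take $\eta=1$ (or $k=\lfloor c\,\eta^2J\rfloor$ with $c<1/8$) rather than ``$\eta$ small''. Second, since $|f_\chi(e_p(K+\frac12))|/\sqrt p=2|g_{\chi,K}(\frac12)|$, the tail enters as $2|T_K|$. The condition you need is therefore $\frac{\delta_d}{\pi}(\log J+\gamma+2\log 2)-2\eta-O(1/J)\ge V$.
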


\begin{proof}
    Take $V\geq 1$ and let $\lambda = 7\sqrt{\log d}$. 
    Let $J\in\R$ and $k\in\N$ be parameters to be chosen later, then: 
    \begin{align*} 
    &\frac{1}{p} \left| \left\{K\in\F_p :\; \Big|\sum_{J\leq |j|< p/2} \chi(K-j)c_j\Big| \geq \lambda \right \} \right| \\
    \leq & \frac{\lambda^{-2k}}{p} \sum_{K\in\F_p} \left| \sum_{J \leq |j| < \frac{p}{2}}  \chi(K-j)c_j \right|^{2k}   \\
    \ll &  \left(\dfrac{8k}{J\lambda^2} \right)^{k}  + \dfrac{2k}{\sqrt{p}} \left(\frac{\log p}{2\lambda} \right)^{2k},
    \end{align*}
    where $c_j$ is defined in the proof of Lemma~\ref{lemma:productofArithmeticToProbabilistic}, and the last inequality follows from Corollary~\ref{coro:MajorationSommeZMN-CasParticulier}.
    We want this quantity to be negligible in front of $\dfrac{1}{d^{2J+1}}$, where $J$ is chosen in a way that allows
    \[
    \left|\sum_{|j| \leq J} 
    \chi(K-j)c_j \right| \geq V+\lambda.
    \]
    One suitable $J$ would be $J= \exp\left(\frac{\pi}{\delta_d}(V+\lambda) - \gamma - 2\log 2\right)$, as seen in Lemma~\ref{lemma:maxOfSumUnitCircle}.
    Now, if we set $k = \frac{49\log d}{8e}J$, then:
    \[
     \left(\dfrac{8k}{J\lambda^2} \right)^{k}  + \dfrac{2k}{\sqrt{p}} \left(\frac{\log p}{2\lambda} \right)^{2k} = o\!\left(  \frac{1}{d^{2J+1}}  \right).
\]

Thus, we can derive a lower bound by noticing that for having $2\lvert g_{\chi,K}\rvert \geq V$, it is enough to have
\[
\left|\sum_{|j| \leq J}  \chi(K-j)c_j \right| \geq V+\lambda \qquad \text{and} \qquad \left|\sum_{J\leq |j|< p/2} \chi(K-j)c_j(x)\right| \leq \lambda.
\]
\noindent
This translates into the following bound, where the $a_j$ are the ones chosen in the proof of Lemma~\ref{lemma:maxOfSumUnitCircle},
 and by using Corollary~\ref{coro:MajorationSommeZMN-CasParticulier} and Proposition~\ref{prop:WeilFixValues}:

\begin{align}
         \begin{split}
            \Phi_\chi(V) 
            \geq \;&         \frac{1}{p} \Big| \big\{ K \in \F_p :\ \forall |j| \le J, \chi(K- j)=a_j \big\} \Big| \\
- \;& \frac{1}{p} \left| \left\{K\in\F_p :\; \left|\sum_{J\leq |j|< p/2} \chi(K-j)c_j(x)\right| \geq \lambda \right \} \right| \\
            \gg \;& \frac{1}{d^{2J+1}}   \\
            \gg \;& \exp\left( -\tilde{C}_d^- \exp\left( \frac{\pi}{\delta_d}V \right)   \right).  \\
         \end{split}
\end{align}

\end{proof}

\section*{Appendix A. Simulations}

This appendix presents numerical experiments illustrating the main results of the paper.  
The simulations are intended to provide empirical support for the theoretical distributional results obtained in Theorems~\ref{theorem:distributionEvenOrderCharacterSum} and~\ref{theorem:distributionOddOrderCharacterSum}.
More specifically, we only compute a lower bound by evaluating the polynomial at the midpoint of the subarcs, which is conjectured to be the true order of magnitude.
Guided by this heuristic, we conjecture that the tail 
distribution $\Phi_\chi$ exhibits an ordering with respect to the character order: 
for fixed \(V\) in the range of the theorem, the function \(\Phi_\chi(V)\) decreases as the (even) 
order of \(\chi\) increases, increases as the (odd) order increases, and moreover satisfies
\[
\Phi_{\chi_{\mathrm{odd}}}(V) \;\le\;\Phi_{\chi_{\mathrm{even}}}(V)
\]
for any pair of characters \(\chi_{\mathrm{even}},\chi_{\mathrm{odd}}\) of respectively even and odd order modulo $p$.

In the following plots, $\chi_n$ denotes a character of order $n$, and all the characters 
are taken mod~$p = 20 000 821$.
 
\begin{figure}[!htb]
  \centering
  \begin{subfigure}{0.48\textwidth}
    \centering
    \includegraphics[width=\textwidth]{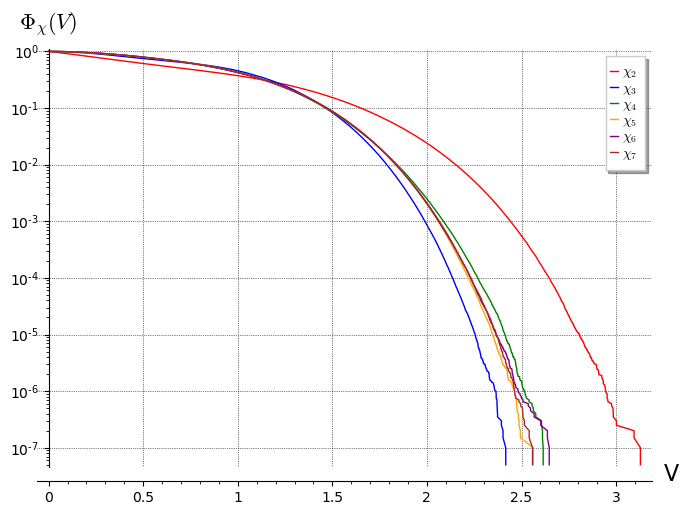}
    \caption{log-log plot of $\Phi_\chi$ for $\chi$ of order ranging from $2$ to $7$.}
  \end{subfigure}
  \hfill
  \begin{subfigure}{0.48\textwidth}
    \centering
    \includegraphics[width=\textwidth]{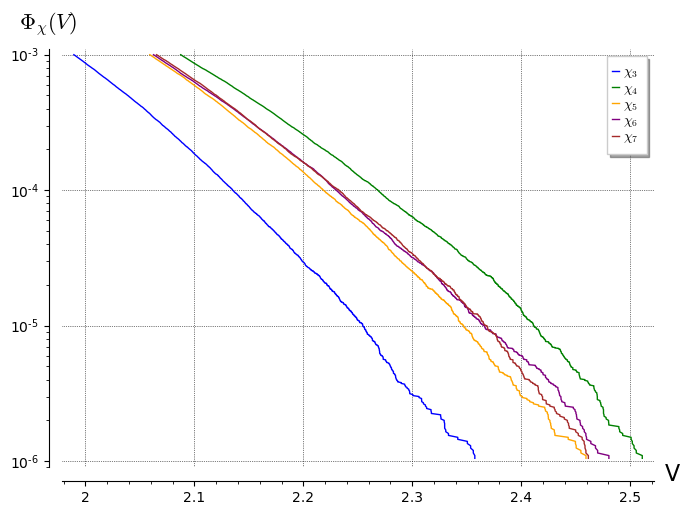}
    \caption{Zoomed version of the first plot, without Legendre's symbol.}
  \end{subfigure}
  \caption{Plot of $\Phi_\chi$ for $\chi \!\mod p=20 000 821$ of order $2,3,4,5,6$ and $7$.}
  \label{fig:twoplots}
\end{figure}

\medskip

These simulations were carried out using SageMath on the computing servers of the \textit{Institut Élie Cartan de Lorraine} (80 cores, 192 GB of RAM).

\providecommand{\bysame}{\leavevmode\hbox to3em{\hrulefill}\thinspace}
\providecommand{\MR}{\relax\ifhmode\unskip\space\fi MR }

\providecommand{\MRhref}[2]{%
  \href{http://www.ams.org/mathscinet-getitem?mr=#1}{#2}
}
\providecommand{\href}[2]{#2}

\vspace{0.5cm}
\noindent
\textsc{Universit\'e de Lorraine, CNRS, Institut Elie Cartan de Lorraine, UMR 7502, Vandoeuvre-l\`es-Nancy, F-54506, France} \\
\noindent
\textit{Email address}: \textbf{\texttt{amine.iggidr@univ-lorraine.fr}}

\end{document}